\theoremstyle{plain}
\newtheorem*{conjecture}{Conjecture}
\newtheorem*{principle}{Reduction principle (RP)}
\newtheorem*{Proposition D}{Proposition D}
\newtheorem{theorem}{Theorem}[section]
\newtheorem{lemma}[theorem]{Lemma}
\newtheorem{notation}[theorem]{Notation}
\newtheorem{corollary}[theorem]{Corollary}
\newtheorem{example}[theorem]{Example}
\newtheorem{proposition}[theorem]{Proposition}
\theoremstyle{definition}
\definecolor{applegreen}{rgb}{0.55, 0.71, 0.0}
\definecolor{dgreen}{rgb}{0.0, 0.5, 0.0}
\def\N{\mathbb N}
\title[]{On parabolic subgroups of Artin groups}
\author{Philip M\"oller, Luis Paris and Olga Varghese}
\date{\today}
\address{Philip M\"oller\\
Department of Mathematics\\
University of M\"unster\\ 
Einsteinstra\ss e 62\\
48149 M\"unster (Germany)}
\email{philip.moeller@uni-muenster.de}
\address{Luis Paris\\
IMB, UMR 5584 du CNRS\\
Universit\'e Bourgogne Franche-Comt\'e\\
21000 Dijon (France)}
\email{lparis@u-bourgogne.fr}
\address{Olga Varghese\\
Department of Mathematics\\
Otto-von-Guericke University of Magdeburg\\ 
Universit\"atsplatz 2\\
39106 Magdeburg (Germany)}
\email{olga.varghese@ovgu.de}
\begin{document}
	
\pagenumbering{arabic}
	
	\begin{abstract}	
		\bigskip
Given an Artin group $A_\Gamma$, a common strategy in the study of $A_\Gamma$ is the reduction to parabolic subgroups whose defining graphs have small diameter, i.e. showing that  $A_\Gamma$ has a specific property if and only if all ``small'' parabolic subgroups of $A_\Gamma$ have this property. Since ``small'' parabolic subgroups are the puzzle pieces of $A_\Gamma$ one needs to study their behavior, in particular their intersections. 
The conjecture we address here says that the class of parabolic subgroups of $A_\Gamma$ is closed under intersection. Under the assumption that intersections of parabolic subgroups in complete Artin groups are parabolic, we show that the intersection of a complete parabolic subgroup with an arbitrary parabolic subgroup is parabolic. Further, we connect the intersection behavior of complete parabolic subgroups of $A_\Gamma$ to fixed point properties and to automatic continuity of $A_\Gamma$ using Bass-Serre theory and a generalization of the Deligne complex.
		
\vspace{1cm}
\hspace{-0.6cm}
{\bf Key words.} \textit{Parabolic subgroups of Artin groups, fixed point properties, automatic continuity, locally compact Hausdorff groups, ${\rm CAT}(0)$ cube complexes.}	
\medskip

\medskip
\hspace{-0.4cm}{\bf 2010 Mathematics Subject Classification.} Primary: 20F36; Secondary: 20F65, 22D05.
	\end{abstract}

\thanks{The first author is funded
	 by a stipend of the Studienstiftung des deutschen Volkes and  by the Deutsche Forschungsgemeinschaft (DFG, German Research Foundation) under Germany's Excellence Strategy EXC 2044--390685587, Mathematics M\"unster: Dynamics-Geometry-Structure. This work is part of the PhD project of the first author.
The second author is supported by the French project ``AlMaRe'' (ANR-19-CE40-0001-01) of the ANR.
The third author is supported by DFG grant VA 1397/2-1. }

\maketitle

\section{Introduction}	

One class of groups that is studied from algebraic, geometric and combinatoric perspective is the class consisting of Artin groups (also known as Artin-Tits groups). 
Given a finite simplicial graph $\Gamma$ with the vertex set $V(\Gamma)$, the edge set $E(\Gamma)$ and with an edge-labeling $m\colon E(\Gamma)\to\left\{2,3,4,\ldots\right\}$,  the associated \emph{Artin group} $A_\Gamma$  is defined as
$$A_\Gamma:= \langle V(\Gamma)\mid \underbrace{vwv\ldots}_{\substack{m(\left\{v,w\right\})-letters}}=\underbrace{wvw\ldots}_{\substack{m(\left\{v,w\right\})-letters}}\text{ whenever } \left\{v,w\right\}\in E(\Gamma)\rangle\,.$$ 
The most common examples of Artin groups are braid groups and Artin groups $A_\Gamma$  where $E(\Gamma)=\emptyset$ or $m(E(\Gamma))= \left\{2\right\}$, those are called \emph{right-angled} Artin groups. 

\subsection{Intersections of parabolic subgroups}

Given a subset $X\subset V(\Gamma)$ of the vertex set, we write $A_X$ for the subgroup generated by $X$. It was proven by van der Lek in \cite{Vanderlek} that $A_X$ is canonically isomorphic to the Artin group $A_{\langle X\rangle}$ where we denote by $\langle X\rangle$ the induced subgraph of $X$ in $\Gamma$. A group of the form $A_X$ is called a \emph{standard parabolic subgroup} of $A_\Gamma$, and a subgroup conjugate to a standard parabolic subgroup is simply called a \emph{parabolic subgroup}. We see the parabolic subgroups of an Artin group as puzzle pieces of the whole group and we are in particular interested in their intersection behavior.  It was proven by van der Lek in \cite{Vanderlek} that the class of standard parabolic subgroups is closed under intersection and it is conjectured that the same result holds for the class consisting of all parabolic subgroups.  Here our focus is mainly on parabolic subgroups where the diameter of the defining graph is small.
We say that $X\subset V(\Gamma)$ is \emph{free of infinity} if $\left\{v,w\right\}\in E(\Gamma)$ for all $v,w\in X, v\neq w$.
In this case the subgroup $A_X$ is called \emph{complete standard parabolic subgroup}.
A subgroup conjugate to a complete standard parabolic subgroup is called a \emph{complete parabolic subgroup}.

Usually,  it is quite hard to prove that an Artin group $A_\Gamma$ has a specific property, but it is sometimes possible to reduce a conjecture about $A_\Gamma$ to complete standard  parabolic subgroups of $A_\Gamma$.  The following reduction principle was formulated by Godelle and the second author in \cite{GodelleParisA}. 

\begin{principle} 
Let $\mathcal{P}$ be a  property of a group and let $A_\Gamma$ be an Artin group. If all complete standard parabolic subgroups of $A_\Gamma$  have property $\mathcal{P}$, then $A_\Gamma$ has property $\mathcal{P}$.
\end{principle}

Here we are interested in intersections of parabolic subgroups of $A_\Gamma$ and our aim is to reduce the intersection conjecture of parabolic subgroups to complete standard parabolic subgroups of $A_\Gamma$. Hence we are interested in the following properties of $A_\Gamma$:
\begin{itemize}
\item {\bf Property (Int):}
For each free of infinity subset $Y\subset V(\Gamma)$ and for all parabolic subgroups $P_1,P_2$ of $A_Y$, the intersection $P_1\cap P_2$ is a parabolic subgroup.
\item {\bf Property (Int+):}
For all complete parabolic subgroups $P_1,P_2$ of $A_\Gamma$, the intersection $P_1\cap P_2$ is a parabolic subgroup. 
\item {\bf Property (Int++):} 
For all parabolic subgroups $P_1,P_2$ of $A_\Gamma$, the intersection $P_1\cap P_2$ is a parabolic subgroup.
\end{itemize}
Unfortunately, we cannot prove that property (Int) implies property (Int++).
However, we can prove that property (Int) implies property (Int+--), which is a property between properties (Int+) and (Int++), and which is defined as follows.
\begin{itemize}
\item {\bf Property (Int+--):}
For each complete parabolic subgroup $P_1$ and for each parabolic subgroup $P_2$ of $A_\Gamma$, the intersection $P_1\cap P_2$ is a parabolic subgroup. 
\end{itemize}

It is easily checked that property (Int++) implies property (Int+) and that property (Int+) implies property (Int).
We think that all these properties are actually equivalent and, even more, that they always hold. We show:

\begin{theorem}\label{Int}
Let $A_\Gamma$ be an Artin group. If $A_\Gamma$ has property (Int), then $A_\Gamma$ has property (Int+--).
\end{theorem}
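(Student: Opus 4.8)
The plan is to reduce "Property (Int) $\Rightarrow$ Property (Int+--)" to a local statement inside a suitably chosen complete parabolic, by exploiting structure theory of parabolic subgroups and the behavior of intersections under conjugation. Given a complete parabolic subgroup $P_1$ and an arbitrary parabolic subgroup $P_2$ of $A_\Gamma$, after conjugating the whole pair (which does not affect whether $P_1\cap P_2$ is parabolic) we may assume $P_1 = gA_Xg^{-1}$ with $X\subset V(\Gamma)$ free of infinity and $g\in A_\Gamma$, and $P_2 = hA_Yh^{-1}$ for some $Y\subset V(\Gamma)$, $h\in A_\Gamma$. Writing $P_2' = g^{-1}P_2g = (g^{-1}h)A_Y(g^{-1}h)^{-1}$, we have $P_1\cap P_2 = g\big(A_X\cap P_2'\big)g^{-1}$, so it suffices to show that $A_X\cap Q$ is parabolic for every parabolic subgroup $Q$ of $A_\Gamma$, where $X$ is free of infinity.

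**The key mechanism** I expect to use is that intersecting with a standard parabolic $A_X$ "sees" only the part of $Q$ that lies inside $A_X$, and that parabolic subgroups contained in a standard parabolic $A_X$ are themselves parabolic subgroups of $A_X = A_{\langle X\rangle}$. Concretely: if $Q = fA_Yf^{-1}$ is parabolic, I want to replace the conjugating element $f$ by a representative that is "as far inside $A_X$ as possible." This is where I would invoke the theory of ribbons / the combinatorial normal form for parabolic subgroups (in the style used by Godelle–Paris and by Cumplido–Gebhardt–González-Meneses–Wiest): there is a well-defined notion of the \emph{support} of a parabolic subgroup, and for the intersection $A_X\cap Q$ one can arrange, after modifying $f$ on the right by an element of $A_Y$ and on the left by an element of $A_X$, that either $Q$ meets $A_X$ trivially — in which case the intersection is the trivial parabolic and we are done — or $f$ can be chosen inside $A_X$. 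In the latter case $Q\cap A_X = f(A_Y\cap f^{-1}A_Xf)f^{-1}$, and since $f\in A_X$ this becomes $f(A_Y\cap A_X)f^{-1} \cdot(\dots)$ — more carefully, both $A_Y$-conjugate pieces now live inside the Artin group $A_{\langle X\rangle}$, which is \emph{complete}.

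**At that point Property (Int) applies directly**: $A_X = A_{\langle X\rangle}$ is the Artin group on a free-of-infinity graph, and $A_X\cap Q$, having been rewritten as the intersection of two parabolic subgroups of $A_X$ (the standard parabolic $A_{X\cap Y'}$-type piece coming from $A_Y\cap A_X$, or more precisely two genuine parabolics of $A_X$ obtained after the normalization), is parabolic \emph{in $A_X$} by hypothesis. A parabolic subgroup of $A_X$ is a parabolic subgroup of $A_\Gamma$ (conjugating inside $A_X$ and then by $g$), so $P_1\cap P_2$ is parabolic in $A_\Gamma$, as required.

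**The main obstacle** I anticipate is the step asserting that the conjugator $f$ can be pushed inside $A_X$ — i.e. that there is no "hidden" intersection coming from an element $f\notin A_X$ that nonetheless satisfies $fA_Yf^{-1}\cap A_X\neq\{1\}$ with this intersection not already visible from an $A_X$-conjugate of $A_Y$. Handling this cleanly will require the structure of the (generalized) Deligne complex or a Bass–Serre / graph-of-groups decomposition of $A_\Gamma$ along standard parabolics: one uses that $A_X$ acts on the tree or complex with stabilizers the $A_X$-conjugates of smaller standard parabolics, and that $Q$, being parabolic, fixes a point; intersecting the fixed sets forces the conjugator into $A_X$ up to the allowed modifications. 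I would also need to treat the degenerate cases (one of the parabolics trivial, or $Q\cap A_X$ trivial) separately but these are immediate since $\{1\}$ is a parabolic subgroup. Once the reduction to "two parabolics inside the complete group $A_X$" is in place, the conclusion is purely an application of Property (Int).
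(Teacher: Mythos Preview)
Your proposal has a genuine gap at precisely the step you flag as the main obstacle, and the gap is fatal rather than merely technical. The claim that one can always normalize the conjugator $f$ of $Q=fA_Yf^{-1}$ so that either $A_X\cap Q=\{1\}$ or $f\in A_X$ is not a known fact for general Artin groups; the ribbon/Garside machinery you invoke is available only in special classes (spherical type, large type, etc.), and establishing it in general would already be at least as hard as the theorem itself. Worse, if that normalization step \emph{did} hold as you state it, then once $f\in A_X$ you would get
\[
A_X\cap fA_Yf^{-1}=f(A_X\cap A_Y)f^{-1}=fA_{X\cap Y}f^{-1}
\]
directly by van der Lek's result on intersections of standard parabolics, which is already a parabolic subgroup --- so property (Int) would never actually be invoked, and you would have proved (Int+--) unconditionally for all Artin groups. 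That is a strong signal that the normalization step is doing all the work and is not free.

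The paper's proof proceeds quite differently, by a double induction: the outer one on the number of non-edges of $\Gamma$, the inner one on a distance in a Bass--Serre tree. One picks a non-edge $\{s,t\}$, splits $A_\Gamma=A_I*_{A_K}A_J$ with $I=V(\Gamma)\setminus\{s\}$, $J=V(\Gamma)\setminus\{t\}$, $K=V(\Gamma)\setminus\{s,t\}$, and observes that the complete parabolic $P_1$ fixes a vertex $u_0$ of the associated tree (since $X$, being free of infinity, lies in $I$ or in $J$). If $\{s,t\}\not\subset Y$ then $P_2$ also fixes a vertex $v_0$, and one inducts on $d=d(u_0,v_0)$: at $d=0$ both parabolics live in a vertex stabilizer $A_{U_0}$ with strictly fewer non-edges, and the outer induction applies; at $d\ge1$ one intersects $P_1$ with the first edge stabilizer along the geodesic (this is a complete-vs-arbitrary intersection inside $A_{U_0}$, so the outer induction yields a complete parabolic again) and thereby decreases $d$. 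If $\{s,t\}\subset Y$, one embeds the Bass--Serre tree of the induced splitting of $A_Y$ into that of $A_\Gamma$ and uses the vertex of $hT_Y$ nearest to $u_0$ to reduce to the previous case. Property (Int) enters only in the base case where $\Gamma$ is already complete; at no point does one need to move a conjugator into $A_X$.
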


The crucial ingredient of the proof of Theorem \ref{Int} is Bass-Serre theory. If $\Gamma$ is not complete, then $A_\Gamma$ is an amalgam of smaller standard parabolic subgroups and we use the action on the corresponding Bass-Serre tree to show the result of the theorem. 

For each Artin group $A_\Gamma$ there is an associated \emph{Coxeter group}  $W_\Gamma$. It is obtained by adding the relations $v^2=1$ for all $v\in V(\Gamma)$. Hence, the Coxeter group $W_\Gamma$ associated to $A_\Gamma$ is given by the following presentation
$$W_\Gamma:= \langle V(\Gamma)\mid v^2=1, (vw)^{m(\left\{v,w\right\})}=1\text{ for all } v\in V(\Gamma), \left\{v,w\right\}\in E(\Gamma)\rangle\,.$$

An Artin group is of \emph{spherical type} if the associated Coxeter group is finite and an Artin group is of \emph{FC-type} if all complete standard parabolic subgroups are of spherical type. It was proven by Cumplido et al. in \cite{CGG} that intersections of parabolic subgroups in a finite type Artin group are parabolic, therefore as an immediately corollary we obtain the following result.

\begin{corollary}
Let $A_\Gamma$ be an Artin group of FC-type and $P_1, P_2$ be two parabolic subgroups. If $P_1$ is complete, then $P_1\cap P_2$ is parabolic.	
\end{corollary}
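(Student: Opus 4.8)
The plan is to deduce the corollary directly from Theorem~\ref{Int} together with the result of Cumplido et al.\ \cite{CGG}. The key observation is that an Artin group of FC-type automatically satisfies property (Int); once this is checked, Theorem~\ref{Int} yields property (Int+--), which is exactly the assertion of the corollary, since the hypothesis that $P_1$ is complete is precisely the hypothesis appearing in property (Int+--).

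To verify property (Int), I would fix a free-of-infinity subset $Y\subset V(\Gamma)$ and consider the standard parabolic subgroup $A_Y$. By van der Lek's theorem \cite{Vanderlek}, $A_Y$ is canonically isomorphic to the Artin group $A_{\langle Y\rangle}$ on the induced subgraph, and since $Y$ is free of infinity, $A_Y$ is a complete standard parabolic subgroup of $A_\Gamma$. By the definition of FC-type, $A_Y$ is therefore of spherical type, i.e.\ the associated Coxeter group $W_{\langle Y\rangle}$ is finite. Now apply \cite{CGG}: in a spherical type Artin group the intersection of any two parabolic subgroups is again a parabolic subgroup. Hence for all parabolic subgroups $P_1,P_2$ of $A_Y$ the intersection $P_1\cap P_2$ is parabolic in $A_Y$, which is precisely property (Int) for $A_\Gamma$.

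Having established property (Int), Theorem~\ref{Int} gives property (Int+--): for every complete parabolic subgroup $P_1$ and every parabolic subgroup $P_2$ of $A_\Gamma$, the intersection $P_1\cap P_2$ is parabolic in $A_\Gamma$. This is exactly the statement to be proved.

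The only point that requires care — and it is bookkeeping rather than a genuine obstacle — is the coherence of the notion of \emph{parabolic subgroup} under passage to a standard parabolic subgroup: a parabolic subgroup of $A_Y$ (a conjugate inside $A_Y$ of a standard parabolic subgroup of $A_Y$) must be identified with a parabolic subgroup of $A_\Gamma$ contained in $A_Y$, so that the conclusion of \cite{CGG} applied inside $A_Y$ matches the formulation of property (Int). This is guaranteed by van der Lek's canonical identification \cite{Vanderlek} together with the transitivity of the relation ``being a standard parabolic subgroup of a standard parabolic subgroup.'' With this remark in place, the proof is immediate.
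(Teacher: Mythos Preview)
Your proof is correct and follows exactly the approach the paper takes: FC-type gives property (Int) via \cite{CGG}, and Theorem~\ref{Int} then yields property (Int+--), which is the corollary. The paper treats this as an ``immediate corollary'' without spelling out the details; your added remark about the coherence of parabolic subgroups under van der Lek's identification is a sensible bit of bookkeeping but not a departure from the intended argument.
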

	
This is a generalisation of Theorem 3.1 in \cite{MorrisWright}, which states that the intersection of two complete parabolic subgroups of an Artin group of FC-type is parabolic.

\subsection{Automatic continuity}

In a remarkable article \cite{Dudley} Dudley was interested in the relation between locally compact Hausdorff groups and free (abelian) groups. Using a special length function on the target groups he showed that  any group homomorphism from a locally compact Hausdorff group into a free (abelian) group is continuous. Inspired by this result Conner and Corson defined the notion of $lcH$-slenderness  \cite{ConnerCorson}. A discrete group $G$ is called \emph{lcH-slender} if any group homomorphism from a locally compact Hausdorff group into $G$ is continuous. Our focus here is on continuity of group homomorphisms from locally compact Hausdorff groups into Artin groups. Many types of Artin groups are known to be $lcH$-slender such as right-angled Artin groups \cite{KramerVarghese}, \cite{CorsonKazachkov}, \cite{MoellerVarghese}, Artin groups of spherical type  \cite{KramerVarghese} and more generally Artin groups of FC-type \cite{KeppelerMoellerVarghese}. We conjecture:
	
\begin{conjecture}
All Artin groups are $lcH$-slender.
\end{conjecture}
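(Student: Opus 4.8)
The plan is to reduce the conjecture to the ``puzzle pieces'', as in Theorem~\ref{Int}: I would show that \emph{if every complete Artin group is $lcH$-slender, then every Artin group is $lcH$-slender}, and treat the complete case as the (still open) base case. Two elementary facts make the reduction run. First, $lcH$-slenderness passes to subgroups, since a subgroup of a discrete group carries the discrete subspace topology, so a homomorphism into the subgroup is continuous exactly when it is continuous into the ambient group. Second, to prove that $A_\Gamma$ is $lcH$-slender it suffices to show that every homomorphism $\phi\colon L\to A_\Gamma$ from a locally compact Hausdorff group $L$ admits an \emph{open} subgroup $U\le L$ with $\phi(U)$ contained in a subgroup of $A_\Gamma$ that is already known to be $lcH$-slender; indeed $\phi|_U$ is then continuous, $\ker\phi\cap U$ is open in $L$, and $\ker\phi$ --- a union of cosets of the open subgroup $\ker\phi\cap U$ --- is open.

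The reduction itself is an induction on $|V(\Gamma)|$ with the complete case as anchor. If $\Gamma$ is not complete, pick non-adjacent vertices $v,w$; then $A_\Gamma=A_{\Gamma_1}*_{A_{\Gamma_0}}A_{\Gamma_2}$ with $V(\Gamma_1)=V(\Gamma)\setminus\{w\}$, $V(\Gamma_2)=V(\Gamma)\setminus\{v\}$, $V(\Gamma_0)=V(\Gamma)\setminus\{v,w\}$, and $A_{\Gamma_1},A_{\Gamma_2}$ are $lcH$-slender by the inductive hypothesis. Compose $\phi$ with the action of $A_\Gamma$ on the Bass--Serre tree $T$ of this splitting and go through the possible $L$-actions. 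If $\phi(L)$ fixes a vertex we are done with $U=L$, since vertex stabilisers are conjugates of $A_{\Gamma_1}$ or $A_{\Gamma_2}$. Otherwise $\phi(L)$ is unbounded on $T$ and we must produce an open $U\le L$ with $\phi(U)$ fixing a vertex. If $\phi(L)$ fixes a unique end $\xi$ and contains a hyperbolic isometry, the Busemann cocycle at $\xi$ is a genuine homomorphism $\phi(L)\to\mathbb{Z}$; since $\mathbb{Z}$ is $lcH$-slender (Dudley \cite{Dudley}) it pulls back to a \emph{continuous} homomorphism $L\to\mathbb{Z}$ whose kernel $U$ is open and has $\phi(U)$ consisting of elliptic isometries fixing $\xi$, and it then remains to see that such a subgroup fixes a vertex. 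As $\Stab(\xi)$ is an ascending union of stabilisers of rays towards $\xi$, each contained in a vertex stabiliser, this point --- and, likewise, the case where $\phi(L)$ itself consists of elliptics with a common fixed end --- is a question about ascending chains of parabolic subgroups of $A_\Gamma$: exactly the combinatorial behaviour that property (Int) and the generalised Deligne complex are meant to control. The remaining configurations --- an $L$-action stabilising a line, or of general type with free subgroups and several axes --- carry no homomorphism to $\mathbb{Z}$ and will need a more hands-on argument, again bounding which parabolic subgroups can occur as stabilisers along the relevant axes.

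The main obstacle, though, is the anchor: it is not known that an arbitrary complete Artin group is $lcH$-slender. This is established for spherical type \cite{KramerVarghese}, hence for $FC$-type Artin groups in general \cite{KeppelerMoellerVarghese}, but among complete defining graphs the $FC$ hypothesis already forces spherical type, so nothing is known for, say, a triangle all of whose edge labels are at least $3$. I would expect a proof of the complete case to need a $\mathrm{CAT}(0)$-type model --- a well-behaved generalisation of the Deligne complex --- on which locally compact Hausdorff groups must have fixed points, together with enough control of the cell stabilisers; this is precisely the programme that the conditional results of the present paper, relating intersections of complete parabolic subgroups to fixed point properties and to automatic continuity, are designed to begin. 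In short, I would regard the conjecture as reduced --- modulo the Bass--Serre bookkeeping sketched above --- to showing that every complete Artin group is $lcH$-slender, and that complete case is the genuinely hard part.
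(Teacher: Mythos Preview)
The statement is a conjecture; the paper does not prove it and offers no proof to compare against. What the paper does prove are the conditional reductions in Theorems~\ref{MainTheorem1} and~\ref{MainTheorem2}: \emph{if} $A_\Gamma$ lies in class $\mathcal{A}$ (resp.\ $\mathcal{B}$) \emph{and} all complete standard parabolic subgroups are $lcH$-slender, \emph{then} $A_\Gamma$ is $lcH$-slender. You correctly identify that the complete case is the genuine obstacle, and your proposal is really a sketch of such a reduction rather than a proof of the conjecture.

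Your reduction, however, is not unconditional either, and it is not merely ``bookkeeping''. The step where a group of elliptics fixing a common end must fix a vertex is exactly the ascending-chain problem for parabolic subgroups, and the only way the paper (or anyone) knows to control it is via Proposition~\ref{ArtinVertices} together with a hypothesis like (Int) guaranteeing that the relevant intersections are parabolic. So your reduction implicitly assumes (Int), just as the paper's does. Beyond that, your case analysis has real gaps: the ``stabilises a line'' and ``general type'' configurations are dismissed as needing a hands-on argument, but for a general locally compact $L$ it is not clear how to produce an open subgroup landing in a vertex stabiliser in those cases without further structural input.

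The paper's route to the reduction is different and cleaner. Rather than classifying how $\phi(L)$ acts on the Bass--Serre tree, it exploits the structure of $L$: the connected component $L^\circ$ is almost connected, hence has property $\mathrm{F}\mathcal{A}'$ by Alperin, so its image lies in a complete parabolic (Proposition~\ref{FA}) and is killed by the hypothesis on complete pieces; the quotient $L/L^\circ$ is totally disconnected, van Dantzig supplies a compact open subgroup, and the same argument kills that too. This avoids the Busemann cocycle, the end/axis taxonomy, and the unresolved cases entirely. The only dynamical fact needed is that an $\mathrm{F}\mathcal{A}'$-subgroup of $A_\Gamma$ lies in a complete parabolic, and that is precisely what (Int++) or (Int) buys via the chain-stabilisation argument. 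Your approach, even if the case analysis could be completed, would arrive at the same conditional statement with more moving parts.
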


Since automatic continuity of group homomorphisms from locally compact Hausdorff groups into ``geometric'' groups and fixed point properties of these geometric groups are strongly connected, see \cite{MoellerVarghese}, we are interested in fixed point properties of subgroups of Artin groups, in particular in properties ${\rm F}\mathcal{A}'$ and ${\rm F}\mathcal{C}'$.

Recall, a group $G$ is called an ${\rm F}\mathcal{A}'$-group if any simplicial action of $G$ on a tree without inversion is locally elliptic, i.e. any element in $G$ acts as an elliptic isometry, that means every isometry has a fixed point. Finite groups are special cases of groups having property ${\rm F}\mathcal{A}'$ but there are also many examples of infinite groups having this property, for instance divisible and compact groups \cite{CapraceMarquis}. We conjecture:

\begin{conjecture}
All Artin groups do not have non-trivial ${\rm F}\mathcal{A}'$-subgroups. 
\end{conjecture}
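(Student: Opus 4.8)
\medskip
\noindent\emph{Towards a proof.} The plan is to induct on the number of vertices $|V(\Gamma)|$, using Bass-Serre theory to strip non-edges off $\Gamma$ --- thereby reducing to the case that $\Gamma$ is complete --- and then to attack the complete case with a generalization of the Deligne complex. The base case $|V(\Gamma)|=1$ is immediate: $A_\Gamma\cong\Z$, every non-trivial subgroup is infinite cyclic, and $\Z$ acts on the line by translations, an action that is not locally elliptic; so $\Z$ is not an ${\rm F}\mathcal{A}'$-group and $A_\Gamma$ has no non-trivial ${\rm F}\mathcal{A}'$-subgroup. The same remark shows that an ${\rm F}\mathcal{A}'$-group admits no non-trivial homomorphism to $\Z$, a fact used repeatedly below.

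Now assume $|V(\Gamma)|\ge 2$ and the conjecture for all Artin groups on fewer vertices. Suppose first that $\Gamma$ is not complete; pick $v,w\in V(\Gamma)$ with $\{v,w\}\notin E(\Gamma)$ and set $\Gamma_1=\langle V(\Gamma)\setminus\{w\}\rangle$, $\Gamma_2=\langle V(\Gamma)\setminus\{v\}\rangle$ and $\Gamma_0=\langle V(\Gamma)\setminus\{v,w\}\rangle$. As there is no edge between $\Gamma_1\setminus\Gamma_0$ and $\Gamma_2\setminus\Gamma_0$, one has $A_\Gamma=A_{\Gamma_1}*_{A_{\Gamma_0}}A_{\Gamma_2}$ (as in the proof of Theorem \ref{Int}), and $A_\Gamma$ acts on the Bass-Serre tree $T$ of this splitting, with vertex stabilizers the conjugates of $A_{\Gamma_1}$ and of $A_{\Gamma_2}$ --- standard parabolic subgroups that are Artin groups on fewer vertices. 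Let $H\le A_\Gamma$ be an ${\rm F}\mathcal{A}'$-subgroup. Then $H$ acts on $T$ without inversion and, being an ${\rm F}\mathcal{A}'$-group, acts locally elliptically. A locally elliptic action on a tree fixes either a vertex or a unique end: the remaining possibility, an invariant line, collapses because on a line two distinct reflections compose to a non-elliptic translation, so $H$ fixes a vertex of that line. If $H$ fixes a vertex of $T$, then $H$ lies in a conjugate of $A_{\Gamma_1}$ or $A_{\Gamma_2}$, hence, after conjugating, in an Artin group on fewer vertices, and the inductive hypothesis yields $H=\{1\}$.

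It remains to exclude the case that $H$ fixes an end $\xi$ of $T$ but no vertex. The horocyclic shift homomorphism $\Stab_T(\xi)\to\Z$ vanishes on $H$, so every $h\in H$ fixes a sub-ray towards $\xi$; fixing one ray $(t_0,t_1,\dots)$ representing $\xi$, we obtain $H=\bigcup_n H_n$ with $H_n=H\cap Q_n$ and $Q_n=\bigcap_{m\ge n}\Stab_{A_\Gamma}(t_m)$, a chain that is strictly ascending unless $H$ already fixes some $t_n$. Each $\Stab_{A_\Gamma}(t_m)$ is a conjugate of $A_{\Gamma_1}$ or $A_{\Gamma_2}$, hence a proper parabolic subgroup, and this is where the intersection conjecture enters: assuming property (Int++), together with the structural facts that a parabolic subgroup of a parabolic subgroup is again parabolic and that a rank-$r$ parabolic subgroup contains no proper rank-$r$ parabolic subgroup --- so that both ascending and descending chains of parabolic subgroups of $A_\Gamma$ terminate, the rank being bounded by $|V(\Gamma)|$ --- the $Q_n$ are parabolic subgroups and the chain $(Q_n)_n$ stabilizes; hence $(H_n)_n$ stabilizes, so $H$ fixes some $t_n$, a contradiction. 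I expect this to be the main obstacle on the non-complete side: the control of infinite intersections and infinite ascending chains of parabolic subgroups needed here is precisely the finiteness input that the intersection conjecture is designed to supply but which is not available unconditionally.

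Finally, suppose $\Gamma$ is complete, so that $A_\Gamma$ need not split over a parabolic subgroup. The idea is to let $H$ act on a suitable generalization of the Deligne complex of $A_\Gamma$, deduce that $H$ fixes a vertex, and conclude that $H$ lies in a proper spherical parabolic subgroup, closing the induction --- the case $\Gamma$ spherical being the known result \cite{KramerVarghese}. For right-angled Artin groups this works because $H$ then acts on a ${\rm CAT}(0)$ cube complex, where an action not fixing a vertex produces a hyperbolic element on the tree dual to some hyperplane, contradicting that $H$ is an ${\rm F}\mathcal{A}'$-group; the FC-type case, via the Charney-Davis ${\rm CAT}(0)$ Deligne complex, is \cite{KeppelerMoellerVarghese}. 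The real gap, and the reason the statement is posed only as a conjecture, is that for an arbitrary complete $\Gamma$ (of affine type, say) no such model of the Deligne complex is available, so the needed fixed-point property of ${\rm F}\mathcal{A}'$-subgroups in the complete case remains open. Note, however, that a proof that all Artin groups have property (Int++) would, by the Bass-Serre reduction above, reduce the conjecture to the case of complete $\Gamma$.
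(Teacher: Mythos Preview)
The statement is a conjecture; the paper does not prove it. What the paper \emph{does} prove towards it is exactly the reduction you outline: under property (Int++) (i.e.\ for Artin groups in the class~$\mathcal{A}$), every ${\rm F}\mathcal{A}'$-subgroup lies in a complete parabolic subgroup (Proposition~\ref{FA}). Your Bass--Serre argument is essentially the paper's, with a cosmetic difference in packaging: the paper works with the dichotomy of Proposition~\ref{Prop FA}/Corollary~\ref{AmalgamFA} (global fixed point, or a strictly ascending chain of edge-stabilizers intersected with $H$), then passes to parabolic closures $PC_\Gamma(g_iA_{\Gamma_0}g_i^{-1}\cap H)$ and uses Proposition~\ref{IntVert} to force stabilization; you instead phrase the no-fixed-point case via a fixed end and look at the ascending chain $Q_n=\bigcap_{m\ge n}\Stab(t_m)$ of intersections of vertex-stabilizers along a ray. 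Both routes consume the same two ingredients---(Int++) to guarantee the relevant groups are parabolic, and Proposition~\ref{IntVert} to bound the length of a strict chain of parabolics---so the arguments are interchangeable.

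Two small remarks. First, your dismissal of an ``invariant line'' via composing reflections is off: the action is without inversion, so there are no reflections; but the dichotomy you want (locally elliptic action without global fixed point $\Rightarrow$ unique fixed end) is standard and holds here, so nothing is lost. Second, you correctly identify the genuine obstruction as the complete case: the paper does not resolve it either, and the clique-cube complex used in Section~4 is of no help when $\Gamma$ is complete (the vertex $A_\Gamma$ is a global fixed point). Your final sentence---that (Int++) for all Artin groups would reduce the conjecture to complete $\Gamma$---is precisely the content of Proposition~\ref{FA}.
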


We define a class $\mathcal{A}$ of Artin groups as follows: an Artin group $A_\Gamma$ is contained  in the class $\mathcal{A}$ iff $A_\Gamma$ has property (Int++).
Examples of Artin groups in the class $\mathcal{A}$ are right-angled Artin groups \cite{DuncanKazachkovRemeslennikov}, Artin groups of spherical type \cite{CGG}, and large-type Artin groups \cite{LargeType}. 
We reduce the above conjecture for the class $\mathcal{A}$  to complete Artin groups.

\begin{proposition}\label{FA}
Let $A_\Gamma$ be an Artin group in the class $\mathcal{A}$ and let $H\subset A_\Gamma$ be a subgroup. If $H$ is an ${\rm F}\mathcal{A}'$-group, then $H$ is contained in a complete parabolic subgroup of $A_\Gamma$.
\end{proposition}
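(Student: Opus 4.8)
The plan is to argue by induction on the number of vertices $|V(\Gamma)|$, using the amalgam decomposition of $A_\Gamma$ coming from a non-edge of $\Gamma$ together with the associated Bass--Serre tree. If $\Gamma$ is complete there is nothing to prove: $A_\Gamma$ is then a complete standard parabolic subgroup of itself. So suppose $\Gamma$ is not complete and pick non-adjacent vertices $a,b\in V(\Gamma)$. Writing $\Gamma_1=\langle V(\Gamma)\setminus\{a\}\rangle$, $\Gamma_2=\langle V(\Gamma)\setminus\{b\}\rangle$ and $\Gamma_0=\Gamma_1\cap\Gamma_2$, the fact that no relation involves both $a$ and $b$ yields a splitting $A_\Gamma=A_{\Gamma_1}*_{A_{\Gamma_0}}A_{\Gamma_2}$ over proper standard parabolic subgroups. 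Let $T$ be the corresponding Bass--Serre tree; then $H$ acts on $T$ simplicially without inversions, with vertex stabilisers the conjugates of $A_{\Gamma_1}$ and $A_{\Gamma_2}$. As $H$ is an ${\rm F}\mathcal{A}'$-group, every element of $H$ acts elliptically on $T$, and a standard fact about locally elliptic actions on trees says that then $H$ either fixes a vertex of $T$ or fixes a unique end of $T$.

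For the two cases I will use, besides the inductive hypothesis, the following structural properties of parabolic subgroups, all available under property (Int++) (and classical for, e.g., the finite-type case): the \emph{rank} of a parabolic subgroup (the number of vertices of its defining graph) is strictly monotone under strict inclusion of parabolic subgroups; consequently every ascending chain of parabolic subgroups stabilises, and every subgroup $K\le A_\Gamma$ has a smallest parabolic overgroup $\mathrm{Pc}(K)$ --- indeed, if $P_0$ is a parabolic subgroup containing $K$ of minimal rank, then for any parabolic $P\supseteq K$ the subgroup $P_0\cap P$ is parabolic (by (Int++)), contains $K$, lies in $P_0$, hence has the same rank as $P_0$ and therefore equals $P_0$, so $P_0\subseteq P$. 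I also use that a parabolic subgroup of $A_\Gamma$ contained in a standard parabolic $A_X$ is a parabolic subgroup of $A_X$; this, together with the fact that parabolic (resp. complete parabolic) subgroups of $A_X$ are parabolic (resp. complete parabolic) in $A_\Gamma$ since $\langle X\rangle$ is a full subgraph of $\Gamma$, shows that property (Int++) passes to standard parabolic subgroups, so every proper standard parabolic of $A_\Gamma$ again lies in the class $\mathcal{A}$.

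Suppose first that $H$ fixes a vertex of $T$. Then $H\subseteq gA_{\Gamma_i}g^{-1}$ for some $g\in A_\Gamma$ and some $i\in\{1,2\}$, so $g^{-1}Hg$ is an ${\rm F}\mathcal{A}'$-subgroup (conjugation is a group isomorphism, and property ${\rm F}\mathcal{A}'$ is a group-isomorphism invariant) of $A_{\Gamma_i}\in\mathcal{A}$, and since $|V(\Gamma_i)|<|V(\Gamma)|$ the induction hypothesis places $g^{-1}Hg$ inside a complete parabolic subgroup $C$ of $A_{\Gamma_i}$; as a complete parabolic subgroup of $A_{\Gamma_i}$ is also one of $A_\Gamma$, the group $gCg^{-1}$ is a complete parabolic subgroup of $A_\Gamma$ containing $H$. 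Suppose instead that $H$ fixes no vertex but fixes an end $\xi$ of $T$, and fix a geodesic ray $v_0,v_1,v_2,\dots$ converging to $\xi$. For each $h\in H$, ellipticity of $h$ together with $h\xi=\xi$ forces $h$ to fix a terminal subray of this ray; hence, setting $H_n=\{h\in H:hv_n=v_n\}$, we obtain an ascending exhaustion $H=\bigcup_{n\ge0}H_n$ with each $H_n$ contained in the stabiliser of $v_n$, which is a conjugate of a proper standard parabolic, hence a parabolic subgroup of rank at most $|V(\Gamma)|-1$. Then $\mathrm{Pc}(H_0)\subseteq\mathrm{Pc}(H_1)\subseteq\cdots$ is an ascending chain of parabolic subgroups, hence stabilises at some parabolic subgroup $Q$; since $H_n\subseteq\mathrm{Pc}(H_n)\subseteq Q$ for all large $n$ and the $H_n$ are nested, $H\subseteq Q$. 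Because $Q$ is contained in the proper subgroup $\Stab(v_n)$, it is a proper parabolic subgroup, so $Q=gA_Zg^{-1}$ with $\langle Z\rangle$ a proper full subgraph of $\Gamma$, and the argument finishes exactly as in the vertex case, applying the induction hypothesis to the ${\rm F}\mathcal{A}'$-subgroup $g^{-1}Hg$ of $A_{\langle Z\rangle}\in\mathcal{A}$.

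The end-fixing case is the main obstacle: a subgroup of an ${\rm F}\mathcal{A}'$-group need not be ${\rm F}\mathcal{A}'$, so one cannot induct directly on the pieces $H_n$, and the whole point is to distil a single parabolic overgroup of $H$ out of the chain of parabolic closures $\mathrm{Pc}(H_n)$ --- which is precisely where property (Int++) and the rank and chain properties of parabolic subgroups are needed in an essential way. Accordingly, the steps that require care are exactly those structural inputs: strict monotonicity of rank under proper inclusion of parabolic subgroups, the resulting stabilisation of ascending chains and the existence of parabolic closures, and the stability of property (Int++) under passing to standard parabolic subgroups.
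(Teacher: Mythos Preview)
Your overall strategy coincides with the paper's: split $A_\Gamma$ as an amalgam over a proper standard parabolic, use the Bass--Serre tree, and in the case where $H$ has no global fixed vertex, exploit an ascending chain together with the stabilisation of parabolic closures (which in the paper is Proposition~\ref{ArtinVertices}). Your ``fixed end'' dichotomy is equivalent to the paper's Proposition~\ref{Prop FA}/Corollary~\ref{AmalgamFA}, and your choice of splitting $A_{V\setminus\{a\}}*_{A_{V\setminus\{a,b\}}}A_{V\setminus\{b\}}$ versus the paper's $A_{st(v)}*_{A_{lk(v)}}A_{V\setminus\{v\}}$ is immaterial.

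There is, however, a genuine gap. Your induction on $|V(\Gamma)|$ requires each proper standard parabolic $A_{\Gamma_i}$ (and later $A_Z$) to lie again in the class $\mathcal{A}$, and you deduce this from the assertion ``a parabolic subgroup of $A_\Gamma$ contained in a standard parabolic $A_X$ is a parabolic subgroup of $A_X$''. This statement is \emph{not} known for general Artin groups: it is precisely one of the open structural questions about parabolic subgroups (known for Coxeter groups, spherical type, FC type, large type, etc., but not in general), and it is not proved in the paper nor does it follow from (Int++) or from Proposition~\ref{ArtinVertices}. So as written, the inductive step is unjustified.

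The paper sidesteps this entirely by \emph{not} passing (Int++) down. It iterates the amalgam decomposition while always taking parabolic closures $PC_\Gamma$ in the fixed ambient group $A_\Gamma$ (which is in $\mathcal{A}$, so $PC_\Gamma$ exists for every subset by Proposition~\ref{PC}); the chain $PC_\Gamma(H_1)\subset PC_\Gamma(H_2)\subset\cdots$ stabilises by Proposition~\ref{ArtinVertices}, giving $H$ inside a conjugate of a strictly smaller standard parabolic, and one repeats. Your argument is easily repaired the same way: run the induction over full subgraphs $\Gamma'\subset\Gamma$ of the \emph{fixed} $\Gamma$, with the statement ``every ${\rm F}\mathcal{A}'$-subgroup of $A_{\Gamma'}$ lies in a complete parabolic subgroup of $A_\Gamma$'', and always use $\mathrm{Pc}=PC_\Gamma$ rather than a closure internal to $A_{\Gamma'}$. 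Then you never need $A_{\Gamma'}\in\mathcal{A}$, and the rest of your proof goes through unchanged.
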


Since a complete right-angled Artin group is isomorphic to a free abelian group and free abelian groups do not have non-trivial ${\rm F}\mathcal{A}'$-subgroups we obtain:

\begin{corollary}
Right-angled Artin groups do not have non-trivial ${\rm F}\mathcal{A}'$-subgroups.
\end{corollary}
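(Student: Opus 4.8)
The plan is to combine Proposition \ref{FA} with the elementary fact that a nontrivial subgroup of a free abelian group always admits a fixed-point-free action on a tree. First I would recall that every right-angled Artin group $A_\Gamma$ belongs to the class $\mathcal{A}$: this is exactly the statement that $A_\Gamma$ has property (Int++), which is the cited result of Duncan--Kazachkov--Remeslennikov \cite{DuncanKazachkovRemeslennikov}. Hence Proposition \ref{FA} applies, and if $H\subseteq A_\Gamma$ is an ${\rm F}\mathcal{A}'$-group, then $H$ is contained in a complete parabolic subgroup $P=gA_Xg^{-1}$ of $A_\Gamma$, where $X\subseteq V(\Gamma)$ is free of infinity and $g\in A_\Gamma$.

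Next I would identify $P$, and therefore $H$ up to isomorphism, with a subgroup of a free abelian group. Since $A_\Gamma$ is right-angled, every edge of the induced subgraph $\langle X\rangle$ carries the label $2$, so $A_X\cong A_{\langle X\rangle}$ is the right-angled Artin group on a complete graph with all labels equal to $2$; by inspection of the defining relations this is the free abelian group $\Z^{|X|}$. Conjugation by $g$ is an isomorphism $A_X\to P$, so $H$ embeds into $\Z^{|X|}$.

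Finally I would show that a nontrivial subgroup of $\Z^{|X|}$ cannot be an ${\rm F}\mathcal{A}'$-group, which forces $H=\{1\}$. Assume $H\neq\{1\}$ and pick $h\in H$ with $h\neq 1$. As a subgroup of $\Z^{|X|}$, the group $H$ is torsion-free, so $h$ has infinite order, and some coordinate projection $\pi\colon\Z^{|X|}\to\Z$ is nonzero on $h$. Restricting $\pi$ to $H$ gives a homomorphism onto a nontrivial, hence infinite cyclic, subgroup of $\Z$; composing this with the translation action of $\Z$ on the real line, viewed as a simplicial tree with integer vertices, yields a simplicial action of $H$ on a tree without inversion in which $h$ acts as a nontrivial translation, i.e.\ as a hyperbolic isometry with no fixed point. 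This contradicts $H$ being an ${\rm F}\mathcal{A}'$-group, so $H=\{1\}$, as claimed.

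There is no real obstacle here: the corollary is essentially immediate once Proposition \ref{FA} is available. The only point that deserves a sentence of care is the passage from "$P$ complete parabolic in a right-angled Artin group" to "$P$ free abelian", together with the observation that \emph{every} nontrivial subgroup of a free abelian group — not just a finitely generated or cyclic one — still admits a fixed-point-free tree action, which the coordinate-projection argument handles uniformly.
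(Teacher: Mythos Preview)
Your proof is correct and follows exactly the route the paper takes: apply Proposition~\ref{FA} (using that right-angled Artin groups lie in $\mathcal{A}$ by \cite{DuncanKazachkovRemeslennikov}) to land in a complete parabolic subgroup, observe that this subgroup is free abelian, and conclude since free abelian groups have no nontrivial ${\rm F}\mathcal{A}'$-subgroups. The paper states this last fact without justification, so your coordinate-projection argument simply fills in a detail the paper leaves to the reader.
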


The proof of Proposition \ref{FA} is of geometric nature.  It is known that if $A_\Gamma$ is not complete, then $A_\Gamma$ in an amalgam of non-trivial parabolic subgroups $A_{\Gamma_1}\ast_{A_{\Gamma_3}} A_{\Gamma_2}$ and this group acts on the Bass-Serre tree corresponding to this splitting. We show that the subgroup $H$ has a global fixed vertex and therefore it is contained in a conjugate of one of the factors in the amalgam.  We proceed to decompose the factors in the amalgam until the defining graphs of these subgroups are complete. The main tool for showing that  $H$ has a global fixed vertex is the following algebraic result concerning parabolic subgroups of Artin groups. Note that, as in many other cases, the corresponding result for Coxeter groups is known to be true (see e.g. \cite{Qi}).

\begin{proposition}\label{IntVert}
	Let $A_\Gamma$ be an Artin group and $gA_\Omega g^{-1}$, $hA_\Delta h^{-1}$ be two parabolic subgroups such that
	  $gA_\Omega g^{-1}\subset hA_\Delta h^{-1}$. Then  the cardinalities of $V(\Omega)$ and $V(\Delta)$ satisfy $|V(\Omega)|\le |V(\Delta)|$ and, if $|V(\Omega)|= |V(\Delta)|$, then $gA_\Omega g^{-1}= hA_\Delta h^{-1}$.
\end{proposition}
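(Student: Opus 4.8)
The plan is to reduce by conjugation to the situation where one of the two subgroups is standard, to deduce the cardinality inequality by projecting to the associated Coxeter group, and to prove the equality assertion by induction on $|V(\Gamma)|$ using Bass--Serre theory.

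For the inequality I would use the canonical epimorphism $\pi\colon A_\Gamma\to W_\Gamma$. It maps the standard parabolic $A_X$ onto the standard parabolic $W_X=W_{\langle X\rangle}$, which has Coxeter rank exactly $|V(X)|$ (distinct standard generators of a Coxeter group are distinct group elements), so $\pi$ carries parabolic subgroups of $A_\Gamma$ to parabolic subgroups of $W_\Gamma$ of the same rank. Applying $\pi$ to $gA_\Omega g^{-1}\subseteq hA_\Delta h^{-1}$ gives an inclusion $\pi(g)W_\Omega\pi(g)^{-1}\subseteq\pi(h)W_\Delta\pi(h)^{-1}$ of parabolic subgroups of the Coxeter group $W_\Gamma$, and the corresponding statement for Coxeter groups (\cite{Qi}) yields $|V(\Omega)|\le|V(\Delta)|$. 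Running this in both directions also shows that $|V(\Omega)|$ depends only on the subgroup $gA_\Omega g^{-1}$, so one may speak of the rank of a parabolic subgroup.

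For the equality I would conjugate by $h^{-1}$ to reduce to $gA_\Omega g^{-1}\subseteq A_\Delta$ with $|V(\Omega)|=|V(\Delta)|=:n$, and induct on $|V(\Gamma)|$. If $\Delta=V(\Gamma)$, then $\Omega\subseteq\Delta$ together with $|\Omega|=|\Delta|$ forces $\Omega=\Delta=V(\Gamma)$ and $gA_\Omega g^{-1}=gA_\Gamma g^{-1}=A_\Gamma=A_\Delta$. If $\Delta\subsetneq V(\Gamma)$, I claim $gA_\Omega g^{-1}$ is in fact a parabolic subgroup of the strictly smaller Artin group $A_\Delta\cong A_{\langle\Delta\rangle}$, i.e.\ the conjugating element can be chosen inside $A_\Delta$; granting this, write $gA_\Omega g^{-1}=uA_{\Omega'}u^{-1}$ with $u\in A_\Delta$ and $\Omega'\subseteq\Delta$; the rank of $gA_\Omega g^{-1}$ is $n=|V(\Delta)|$, so $\Omega'=\Delta$, and hence $gA_\Omega g^{-1}=uA_\Delta u^{-1}=A_\Delta$ because $u\in A_\Delta$.

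The whole argument thus hinges on the claim that a parabolic subgroup of $A_\Gamma$ contained in a standard parabolic $A_\Delta$ is a parabolic subgroup of $A_\Delta$, and this is the hard part. I would prove it by a secondary induction using the splitting: if $\langle\Delta\rangle$ is not complete, choose two non-adjacent vertices of $\langle\Delta\rangle$ to write $A_\Delta=A_{\Delta_1}\ast_{A_{\Delta_3}}A_{\Delta_2}$ as an amalgam of proper standard parabolics of $A_\Delta$ acting on the Bass--Serre tree $T_\Delta$, show that $gA_\Omega g^{-1}$ fixes a vertex of $T_\Delta$, deduce that it is conjugate — by an element of $A_\Delta$ — into $A_{\Delta_1}$ or $A_{\Delta_2}$, and apply the inductive hypothesis there, the composed conjugator again lying in $A_\Delta$. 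The case $\langle\Delta\rangle$ complete does not reduce this way and must be handled separately, once more using that $\pi$ detects rank (so that the images in $W_\Gamma$ coincide) together with the fact that the only full-rank parabolic subgroup of an Artin group is the whole group. I expect establishing ellipticity of $gA_\Omega g^{-1}$ on $T_\Delta$ — essentially, that a parabolic subgroup sitting inside an amalgam of standard parabolics cannot behave hyperbolically on the corresponding tree — and settling the complete case, to account for essentially all of the difficulty.
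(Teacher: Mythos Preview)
Your treatment of the inequality $|V(\Omega)|\le|V(\Delta)|$ is correct and coincides with the paper's: project to the Coxeter group and invoke the known Coxeter result.

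The equality argument, however, has a genuine gap. You reduce to showing that a parabolic $P=gA_\Omega g^{-1}$ contained in a standard parabolic $A_\Delta$ is a parabolic of $A_\Delta$, and you propose to prove this by showing that $P$ fixes a vertex of the Bass--Serre tree $T_\Delta$ for a splitting $A_\Delta=A_{\Delta_1}\ast_{A_{\Delta_3}}A_{\Delta_2}$. But in the very case you need, namely $|V(\Omega)|=|V(\Delta)|$, this ellipticity \emph{cannot} hold: every vertex stabiliser of $T_\Delta$ is a conjugate of $A_{\Delta_1}$ or $A_{\Delta_2}$, both of which have strictly smaller rank than $|V(\Delta)|$; your own rank inequality (applied inside $A_\Delta$) would then force $|V(\Omega)|<|V(\Delta)|$, a contradiction. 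So $P$ never fixes a vertex of $T_\Delta$ when the ranks agree and $\langle\Delta\rangle$ is not complete, and the Bass--Serre route collapses exactly where you need it. Your base case for $\langle\Delta\rangle$ complete is also circular: invoking ``the only full-rank parabolic of an Artin group is the whole group'' presupposes that $P$ is already known to be a parabolic \emph{of $A_\Delta$}, which is what you are trying to establish.

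The paper avoids this obstruction altogether and does not use Bass--Serre theory for this proposition. After reducing to $h=1$ it uses the Tits set-section $\iota\colon W_\Gamma\to A_\Gamma$ to lift the Coxeter inclusion $\bar g\,W_\Omega\,\bar g^{-1}\subset W_\Delta$ to an \emph{equality} $\iota(\bar g)A_\Omega\iota(\bar g)^{-1}=A_\Delta$ at the Artin level (this is the content of the paper's Lemma~\ref{lift}, proved via the double-coset minimal representative). This yields $kA_\Omega k^{-1}\subset A_\Omega$ with $k=\iota(\bar g)^{-1}g$ lying in the colored Artin group $CA_\Gamma$. The paper then exploits the retraction $\pi_\Omega\colon CA_\Gamma\to CA_\Omega$ to show that conjugation by $k$ is surjective on $CA_\Omega$, and a five-lemma argument on the short exact sequence $CA_\Omega\hookrightarrow A_\Omega\twoheadrightarrow W_\Omega$ upgrades this to $kA_\Omega k^{-1}=A_\Omega$, whence $gA_\Omega g^{-1}=A_\Delta$. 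The two ingredients you are missing are precisely the Tits section lift and the colored-Artin retraction; neither is supplied by Bass--Serre theory.
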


As immediate corollary we have:

\begin{corollary}
Let $A_\Gamma$ be an Artin group. If $A_\Gamma$ is in the class $\mathcal{A}$, then an arbitrary intersection of parabolic subgroups is a parabolic subgroup. In particular, for a subset $B\subset A_\Gamma$ there exists a unique minimal (with respect to inclusion) parabolic subgroup containing $B$.
\end{corollary}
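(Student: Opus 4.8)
The plan is to deduce the general case from the finite one. Membership of $A_\Gamma$ in $\mathcal{A}$ means precisely that $A_\Gamma$ satisfies property (Int++), and a trivial induction on $n$ then shows that every intersection $P_{i_1}\cap\cdots\cap P_{i_n}$ of finitely many parabolic subgroups is again parabolic. So the real task is to pass from finite to arbitrary intersections, and the tool for that is the numerical invariant furnished by Proposition \ref{IntVert}.

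First I would record that each parabolic subgroup $P=gA_\Omega g^{-1}$ carries a well-defined \emph{rank} $\mathrm{rk}(P):=|V(\Omega)|$: applying Proposition \ref{IntVert} to the two (trivial) inclusions between two expressions $gA_\Omega g^{-1}=hA_\Delta h^{-1}$ of the same parabolic subgroup yields $|V(\Omega)|=|V(\Delta)|$. Moreover Proposition \ref{IntVert} says exactly that $\mathrm{rk}$ is monotone under inclusion of parabolic subgroups and that an inclusion $P\subset Q$ of parabolic subgroups with $\mathrm{rk}(P)=\mathrm{rk}(Q)$ is an equality. Since $A_\Gamma=A_{V(\Gamma)}$ is itself a standard parabolic subgroup, $\mathrm{rk}$ is bounded above by $|V(\Gamma)|$, hence it attains a minimum on any nonempty set of parabolic subgroups.

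Now let $\{P_i\}_{i\in I}$ be a nonempty family of parabolic subgroups and put $P:=\bigcap_{i\in I}P_i$. Consider the (nonempty) collection of all finite intersections $P_{i_1}\cap\cdots\cap P_{i_n}$ with $\{i_1,\dots,i_n\}\subset I$; by the first paragraph each such group is parabolic. Pick one, say $Q=P_{i_1}\cap\cdots\cap P_{i_n}$, of minimal rank among them. I claim $Q=P$. Clearly $P\subset Q$. Conversely, for any $j\in I$ the group $Q\cap P_j=P_{i_1}\cap\cdots\cap P_{i_n}\cap P_j$ is again a finite intersection from the family, hence parabolic, and $Q\cap P_j\subset Q$, so $\mathrm{rk}(Q\cap P_j)\le\mathrm{rk}(Q)$; minimality of $\mathrm{rk}(Q)$ forces equality, and then Proposition \ref{IntVert} gives $Q\cap P_j=Q$, i.e. $Q\subset P_j$. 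As $j$ is arbitrary, $Q\subset\bigcap_{i\in I}P_i=P$, whence $Q=P$ is parabolic. (If $I=\emptyset$, the intersection is $A_\Gamma$, which is parabolic.)

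For the last assertion, given $B\subset A_\Gamma$ the family of all parabolic subgroups containing $B$ is nonempty, since it contains $A_\Gamma$; by the above its intersection $P_B$ is a parabolic subgroup, it contains $B$, and by construction it is contained in every parabolic subgroup containing $B$, so it is the unique minimal such subgroup. I do not expect a genuine obstacle: all the weight is carried by Proposition \ref{IntVert} and by property (Int++), and the only point requiring a moment's care is that $Q\cap P_j$ lies in the very same family of finite intersections, so that the minimality of $\mathrm{rk}(Q)$ may legitimately be invoked.
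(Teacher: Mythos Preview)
Your proof is correct and follows essentially the same approach as the paper: both arguments use Proposition~\ref{IntVert} to equip parabolic subgroups with a rank bounded by $|V(\Gamma)|$, and then exploit this to reduce an arbitrary intersection to a finite one. The only cosmetic difference is that the paper phrases the well-foundedness as a descending chain $P_{i_1}\supset P_{i_1}\cap P_{i_2}\supset\cdots$ that must stabilize, whereas you pick a finite intersection of minimal rank directly; your formulation is arguably cleaner, since it makes explicit why the resulting finite intersection is contained in every $P_j$.
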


By definition, a locally compact Hausdorff group $L$ is called \emph{almost connected} if the quotient $L/L^\circ$, where $L^\circ$ is the connected component of $L$, is compact. Using the fact that any almost connected locally compact Hausdorff group has property ${\rm F}\mathcal{A}'$ \cite{Alperin} we show:

\begin{theorem}\label{MainTheorem1}
Let $A_\Gamma$ be an Artin group in the class $\mathcal{A}$. 
\begin{enumerate}
\item Let $\psi\colon L\to A_\Gamma$ be a group homomorphism from a locally compact Hausdorff group $L$ into $A_\Gamma$. If $L$ is almost connected, then $\psi(L)$ is contained in a complete parabolic subgroup of $A_\Gamma$.
\item If all complete standard parabolic subgroups of $A_\Gamma$ are $lcH$-slender, then $A_\Gamma$ is $lcH$-slender.
\end{enumerate}
\end{theorem}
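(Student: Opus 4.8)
The two parts should be derived from the geometric and algebraic machinery already assembled. For part (1), the plan is to invoke the result of Alperin \cite{Alperin} that any almost connected locally compact Hausdorff group $L$ has property ${\rm F}\mathcal{A}'$. Since ${\rm F}\mathcal{A}'$ passes to quotients (a simplicial action of $\psi(L)$ on a tree pulls back to a simplicial action of $L$, and ellipticity of each $\psi(\ell)$ follows from ellipticity of $\ell$), the image $\psi(L) \subset A_\Gamma$ is again an ${\rm F}\mathcal{A}'$-group. Now apply Proposition \ref{FA}: since $A_\Gamma$ lies in the class $\mathcal{A}$ and $H := \psi(L)$ is an ${\rm F}\mathcal{A}'$-subgroup, $H$ is contained in a complete parabolic subgroup of $A_\Gamma$. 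That is exactly the assertion of (1).

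For part (2), I would argue by induction on $|V(\Gamma)|$ using the amalgam decomposition exactly as in the proof of Proposition \ref{FA}. Let $\psi \colon L \to A_\Gamma$ be a homomorphism from a locally compact Hausdorff group $L$; the goal is continuity, i.e. that $\ker\psi$ is open, equivalently that $\psi$ is continuous at the identity. The key structural observation is that for an arbitrary locally compact Hausdorff group $L$, the connected component $L^\circ$ is a closed (hence locally compact) subgroup, and $L^\circ$ together with enough of $L$ around it forms an almost connected open subgroup $L_0 \le L$ (take $L_0$ to be generated by $L^\circ$ and a compact open subgroup of the totally disconnected group $L/L^\circ$, pulled back). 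By part (1), $\psi(L_0)$ is contained in a complete parabolic subgroup $gA_Y g^{-1}$ with $Y$ free of infinity. Replacing $\psi$ by $x \mapsto g^{-1}\psi(x)g$ and restricting, we get a homomorphism $L_0 \to A_Y$ into a complete standard parabolic subgroup, which is $lcH$-slender by hypothesis; hence $\psi|_{L_0}$ is continuous, so $\ker(\psi|_{L_0})$ is open in $L_0$, and since $L_0$ is open in $L$, $\ker\psi \cap L_0 = \ker(\psi|_{L_0})$ is open in $L$. An open subgroup has open kernel of the restriction sitting inside the full kernel; more precisely $\ker\psi$ contains an open subgroup of $L$, hence is open, so $\psi$ is continuous. (If one prefers to avoid $L_0$ entirely: apply part (1) to the almost connected group $L$ directly only when $L$ itself is almost connected, and in general reduce to this case by the standard fact that every locally compact Hausdorff group has an open almost connected subgroup.)

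The main obstacle I anticipate is the bookkeeping in part (2): one must be careful that the complete parabolic subgroup produced by part (1) is genuinely a \emph{standard} one after conjugation, so that the $lcH$-slenderness hypothesis applies, and that passing from continuity on an open subgroup to continuity on all of $L$ is done correctly (this uses that a subgroup of a topological group containing an open subgroup is itself open). A secondary point requiring care is verifying that ${\rm F}\mathcal{A}'$ is inherited by homomorphic images — this is where the definition via ``every simplicial action without inversion is locally elliptic'' is used, since an action of $\psi(L)$ on a tree gives an action of $L$, and local ellipticity transfers because $\psi$ is a group homomorphism (no topology needed at this stage). Once these are in place, both statements follow cleanly from Proposition \ref{FA}, Alperin's theorem, and the $lcH$-slenderness of the complete standard parabolic subgroups.
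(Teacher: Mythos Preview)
Your proposal is correct and the overall strategy matches the paper's. Part (1) is identical: Alperin's theorem gives that almost connected locally compact Hausdorff groups have ${\rm F}\mathcal{A}'$, this passes to the image $\psi(L)$, and Proposition~\ref{FA} finishes.

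For part (2) there is a minor but genuine organizational difference worth noting. The paper argues in two stages: first apply part (1) to the connected component $L^\circ$ to get $\varphi(L^\circ)$ inside a complete parabolic, deduce by $lcH$-slenderness that $\varphi|_{L^\circ}$ is continuous and hence $\varphi(L^\circ)$ is trivial, pass to the totally disconnected quotient $L/L^\circ$, pick a compact open subgroup $K$ via van Dantzig, apply part (1) again to $K$, and then use the additional fact that $lcH$-slender groups are torsion free (Lemma~\ref{lchtorsion}) to conclude $\psi(K)$ is trivial. Your route is more economical: you pass directly to a single open almost connected subgroup $L_0\le L$, apply part (1) once, use $lcH$-slenderness once to get continuity of $\psi|_{L_0}$, and conclude that $\ker\psi$ contains an open subgroup. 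This avoids both the second invocation of part (1) and the torsion-free lemma entirely. Both arguments rest on the same structural input (van Dantzig, part (1), and the hypothesis on complete standard parabolics), so the difference is one of packaging rather than substance.

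One small remark: your opening sentence for part (2) announces an induction on $|V(\Gamma)|$ via the amalgam decomposition, but your actual argument never uses it---part (1) already absorbs all of that work. You should drop that sentence to avoid confusing the reader.
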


Associated to an Artin group $A_\Gamma$ is a generalization of the Deligne complex, the so called clique-cube complex $C_\Gamma$, whose vertices are cosets of  complete standard parabolic subgroups of $A_\Gamma$.  
We describe the construction of this cube complex and some important properties of it in Section 4. The group $A_\Gamma$ acts on $C_\Gamma$ via left-multiplication and preserves the cubical structure of $C_\Gamma$. We use this action to show under ``weaker'' assumptions on parabolic subgroups of an Artin group than on Artin groups in the class $\mathcal{A}$ that principle RP holds for the property of being $lcH$-slender. Since the puzzle pieces of the complex $C_\Gamma$ are cosets of complete standard parabolic subgroups of $A_\Gamma$ and hence the stabilizers of these vertices are complete parabolic subgroups, this proof relies heavily on their ``good'' behavior.

A generalization of the fixed point property ${\rm F}\mathcal{A}'$ is the property ${\rm F}\mathcal{C}'$. A group $G$ has property ${\rm F}\mathcal{C}'$ if every cellular action of $G$ on a finite dimensional CAT(0) cube complex is locally elliptic. Examples of ${\rm F}\mathcal{C}'$-groups are finite, divisible and compact groups, see \cite{CapraceMarquis}. It is known that torsion free ${\rm CAT}(0)$ cubical groups do not have non-trivial ${\rm F}\mathcal{C}'$-subgroups. Hence, any ${\rm F}\mathcal{C}'$-subgroup of a right-angled Artin group is trivial. We conjecture:

\begin{conjecture}
All Artin groups do not have non-trivial ${\rm F}\mathcal{C}'$-subgroups.
\end{conjecture}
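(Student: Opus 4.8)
The plan is to transfer the geometric reduction behind Proposition~\ref{FA} from trees to ${\rm CAT}(0)$ cube complexes, thereby pushing the conjecture down to complete Artin groups, which is where I expect the real difficulty to lie.

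The first, essentially formal, observation is that a simplicial tree without inversion is a one-dimensional ${\rm CAT}(0)$ cube complex on which a simplicial action is cellular; hence every ${\rm F}\mathcal{C}'$-group is an ${\rm F}\mathcal{A}'$-group and, a fortiori, every ${\rm F}\mathcal{C}'$-subgroup of $A_\Gamma$ is an ${\rm F}\mathcal{A}'$-subgroup. So for $A_\Gamma$ in the class $\mathcal{A}$, Proposition~\ref{FA} applies directly and already shows that any ${\rm F}\mathcal{C}'$-subgroup $H\subseteq A_\Gamma$ is contained in a complete parabolic subgroup. To replace the hypothesis ``$A_\Gamma\in\mathcal{A}$'' by the weaker conditions on parabolic subgroups used in Section~4 --- just as is done there for $lcH$-slenderness, and as is natural since the vertex stabilizers of the clique-cube complex $C_\Gamma$ are precisely the complete parabolic subgroups --- I would instead let $H$ act cellularly on the finite-dimensional ${\rm CAT}(0)$ cube complex $C_\Gamma$: the action is locally elliptic by hypothesis, and it remains to produce an $H$-fixed vertex. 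The key input is Proposition~\ref{IntVert}: a proper inclusion $gA_\Omega g^{-1}\subsetneq hA_\Delta h^{-1}$ of parabolic subgroups forces $|V(\Omega)|<|V(\Delta)|$, so parabolic subgroups satisfy both chain conditions and any directed union of complete parabolic subgroups is again a single complete parabolic subgroup. This is exactly what one needs to exclude the remaining obstruction, an $H$-fixed end of $C_\Gamma$ with no fixed vertex; otherwise the argument proceeds in the spirit of the Bass--Serre argument for Proposition~\ref{FA}, passing from a bounded $H$-orbit to an $H$-fixed vertex in the usual way. Such a vertex is a coset $gA_Y$ with $Y$ free of infinity, so $H\subseteq gA_Yg^{-1}$, a complete parabolic subgroup.

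Granting this reduction, it remains to prove that a complete Artin group contains no non-trivial ${\rm F}\mathcal{C}'$-subgroup. When $A_\Gamma$ is right-angled this costs nothing and, as already recorded above, does not even use the reduction: a right-angled Artin group is a torsion-free ${\rm CAT}(0)$ cubical group, and such groups have no non-trivial ${\rm F}\mathcal{C}'$-subgroups. For a general complete Artin group, however, there is no comparable action at hand, and I expect this to be the genuine obstacle: it is not an intermediate lemma but rather a reformulation, in the spirit of principle RP, of the conjecture itself. It can be verified in a few families where the complete parabolic subgroup is virtually ${\rm CAT}(0)$ cubical --- for instance the dihedral-type Artin groups, since these are virtually a direct product of a free group with $\mathbb{Z}$ --- but it is not known in general, already for the braid groups $B_n$ with $n$ large or for Artin groups of affine type. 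The realistic deliverable of this plan is therefore the conjecture for every $A_\Gamma$ in the class $\mathcal{A}$ (and, under the weaker parabolic hypotheses of Section~4, via $C_\Gamma$), its known consequence for right-angled Artin groups, together with a clean reduction of the general case to complete Artin groups.
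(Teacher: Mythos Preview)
This statement is a \emph{conjecture} in the paper, not a proved theorem; the paper offers no proof of it. Your proposal is honest about this and correctly isolates the partial progress that is actually available: the reduction to complete parabolic subgroups for $A_\Gamma\in\mathcal{B}$ via the action on the clique-cube complex $C_\Gamma$ is exactly the content of the paper's Proposition~\ref{FC} (proved as Proposition~\ref{globalfixedpoint}), and the key input you single out --- the chain condition on parabolic subgroups from Proposition~\ref{IntVert} --- is precisely what the paper uses to force the ascending chain of parabolic closures $PC_\Gamma(H_i)$ to stabilize and hence produce a global fixed vertex. Your framing via ``excluding a fixed end'' is a slightly different gloss on the same mechanism; the paper argues directly with the chain $PC_\Gamma(H_1)\subset PC_\Gamma(H_2)\subset\cdots$. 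Both you and the paper leave the complete case open, and your assessment that this is the genuine obstacle matches the paper's stance.
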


We define a class $\mathcal{B}$ of Artin groups as follows: $A_\Gamma$ is in the class $\mathcal{B}$ iff $A_\Gamma$ has property (Int).
We show:

\begin{proposition}\label{FC}
Let $A_\Gamma$ be an Artin group in the class $\mathcal{B}$. Let $H\subset A_\Gamma$ be a subgroup. If $H$ is a ${\rm F}\mathcal{C}'$-group, then $H$ is contained in a complete parabolic subgroup of $A_\Gamma$.
\end{proposition}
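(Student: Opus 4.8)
The plan is to mimic the strategy announced for Proposition~\ref{FA}, replacing the Bass--Serre tree by the clique-cube complex $C_\Gamma$ and property ${\rm F}\mathcal{A}'$ by property ${\rm F}\mathcal{C}'$. Recall that $C_\Gamma$ is a finite dimensional CAT(0) cube complex on which $A_\Gamma$ acts cellularly, and whose vertices are the cosets $gA_Y$ with $Y\subset V(\Gamma)$ free of infinity; the stabilizer of such a vertex is the complete parabolic subgroup $gA_Yg^{-1}$. Since $H$ is an ${\rm F}\mathcal{C}'$-group, its action on $C_\Gamma$ obtained by restriction is locally elliptic: every $h\in H$ fixes a vertex of $C_\Gamma$. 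The first step is therefore to upgrade "locally elliptic" to "has a global fixed point", i.e. to produce a single vertex of $C_\Gamma$ fixed by all of $H$.

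For the second step I would invoke the standard fact (see e.g.\ the work of Bridson, or Sageev--Wise, on the Tits-style alternative for cube complexes) that a group acting cellularly and locally elliptically on a finite dimensional CAT(0) cube complex has a global fixed point; the finite dimensionality of $C_\Gamma$ is precisely what is needed here, and is part of the properties of $C_\Gamma$ recorded in Section~4. Granting this, there is a vertex $gA_Y$ of $C_\Gamma$ with $H\subset \Stab(gA_Y)=gA_Yg^{-1}$, and $gA_Yg^{-1}$ is a complete parabolic subgroup. This would already give the conclusion, \emph{provided} we only use that $A_\Gamma\in\mathcal{B}$ in establishing the properties of $C_\Gamma$ (in particular that its vertex stabilizers are parabolic and that the links / intersection patterns needed to run the fixed-point argument behave well). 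Here is where property (Int) enters: the combinatorics of $C_\Gamma$---that the poset of cliques closes up correctly, that intersections of the relevant complete parabolics along a cube are again parabolic of the expected type---relies on intersections of parabolic subgroups inside complete (indeed inside free-of-infinity) parabolic subgroups being parabolic, which is exactly property (Int).

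The main obstacle I expect is the passage from local ellipticity to a global fixed point, and more precisely checking that the fixed-point machinery for finite dimensional CAT(0) cube complexes applies to the restricted $H$-action without any finiteness or discreteness hypothesis on $H$ (the proposition allows $H$ to be an arbitrary subgroup). One clean route is: a locally elliptic action on a finite dimensional CAT(0) cube complex has bounded orbits (an Helly-type / combinatorial argument in the dimension), hence a fixed point in the CAT(0) metric, which can then be promoted to a fixed vertex; alternatively one argues directly with the median structure and a descending induction on dimension, at each stage restricting to a convex subcomplex fixed setwise. A secondary technical point is bookkeeping the use of property (Int) versus the stronger property (Int++): one must make sure the proof of Proposition~\ref{FC} never needs intersections of \emph{arbitrary} parabolics to be parabolic, only intersections taken inside a free-of-infinity $A_Y$, so that the hypothesis $A_\Gamma\in\mathcal{B}$ (and not $A_\Gamma\in\mathcal{A}$) genuinely suffices; this is the analogue of the distinction between Proposition~\ref{FA} and Proposition~\ref{FC} and is presumably why $C_\Gamma$ rather than the Bass--Serre tree is the right object here.
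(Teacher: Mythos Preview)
Your overall plan---act on the clique-cube complex $C_\Gamma$, observe the action is locally elliptic, and try to promote this to a global fixed vertex---matches the paper. The gap is in the promotion step and in where you think property (Int) enters.

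The ``standard fact'' you invoke, that any locally elliptic cellular action on a finite dimensional CAT(0) cube complex has a global fixed point, is false for groups that are not finitely generated. The paper makes this point explicitly: the result of \cite{LederVarghese} requires finite generation, and Serre's example shows $\mathbb{Q}$ acting locally elliptically on a tree with no global fixed point. Your suggested Helly/bounded-orbit route does not circumvent this: a tree is a $1$-dimensional CAT(0) cube complex, so no purely dimension-based argument can work. Since $H$ is an arbitrary subgroup of $A_\Gamma$, this is a genuine obstacle, not a technicality.

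Relatedly, you have misplaced the role of property (Int). The construction of $C_\Gamma$ and the fact that it is CAT(0) hold for \emph{all} Artin groups (this is \cite{GodelleParisB}); no intersection hypothesis is needed there. Where (Int) actually enters in the paper's proof is exactly at the step you could not complete: assuming no global fixed point, one builds finitely generated $H_i=\langle h_1,\dots,h_i\rangle$ with ${\rm Fix}(\Phi(H_1))\supsetneq{\rm Fix}(\Phi(H_2))\supsetneq\cdots$ (using \cite{LederVarghese} for the finitely generated pieces). Since each $H_i$ lies in a complete parabolic subgroup, property (Int) (via Theorem~\ref{Int}, giving (Int+--)) guarantees the parabolic closures $PC_\Gamma(H_i)$ exist; one checks ${\rm Fix}(\Phi(H_i))={\rm Fix}(\Phi(PC_\Gamma(H_i)))$, and then the ascending chain $PC_\Gamma(H_1)\subset PC_\Gamma(H_2)\subset\cdots$ must stabilize by Proposition~\ref{IntVert}, because the number of vertices in the defining graph is bounded by $|V(\Gamma)|$. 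This contradiction is the missing finiteness that replaces the finite-generation hypothesis you could not assume.
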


Using the result of the Main Theorem in \cite{MoellerVarghese}  we reduce the assumptions regarding parabolic subgroups in Theorem \ref{MainTheorem1} to complete parabolic subgroups.

\begin{theorem}\label{MainTheorem2}
Let $A_\Gamma$ be an Artin group in the class $\mathcal{B}$. 
\begin{enumerate}
\item Let $\psi\colon L\to A_\Gamma$ be a group homomorphism from a locally compact Hausdorff group $L$ into $A_\Gamma$. If $L$ is almost connected, then $\psi(L)$ is contained in a parabolic complete subgroup of $A_\Gamma$.
\item If all complete standard parabolic subgroups of $A_\Gamma$ are $lcH$-slender, then $A_\Gamma$ is $lcH$-slender.
\end{enumerate}
\end{theorem}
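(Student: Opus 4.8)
The plan is to deduce both statements from Proposition \ref{FC} and the action of $A_\Gamma$ on the clique-cube complex $C_\Gamma$ of Section 4, parallel to the way Theorem \ref{MainTheorem1} follows from Proposition \ref{FA} together with Alperin's theorem, but with the fixed point property ${\rm F}\mathcal{A}'$ replaced by ${\rm F}\mathcal{C}'$ and Proposition \ref{FA} by Proposition \ref{FC}.

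For part (1) I would first use the cube-complex analogue of Alperin's theorem: every almost connected locally compact Hausdorff group has property ${\rm F}\mathcal{C}'$ (see \cite{CapraceMarquis}). Property ${\rm F}\mathcal{C}'$ passes to quotients, since if $Q$ is a quotient of an ${\rm F}\mathcal{C}'$-group $L$ and $Q$ acts cellularly on a finite-dimensional CAT(0) cube complex $X$, then $L$ acts on $X$ through the quotient map, so every element of $L$ and hence every element of $Q$ acts as an elliptic isometry. Applying this to $\psi(L)\cong L/\ker\psi$ shows that $\psi(L)$ is an ${\rm F}\mathcal{C}'$-group, and since $A_\Gamma\in\mathcal{B}$, Proposition \ref{FC} yields that $\psi(L)$ is contained in a complete parabolic subgroup of $A_\Gamma$.

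For part (2) I would induct on $|V(\Gamma)|$. If $\Gamma$ is complete, then $A_\Gamma$ is itself a complete standard parabolic subgroup of $A_\Gamma$, hence $lcH$-slender by hypothesis. If $\Gamma$ is not complete, then every free-of-infinity subset $Y\subseteq V(\Gamma)$ satisfies $|Y|<|V(\Gamma)|$, and $A_\Gamma$ acts on the finite-dimensional CAT(0) cube complex $C_\Gamma$ whose vertices are the cosets of complete standard parabolic subgroups. The stabilizer of the vertex $gA_Y$ is $gA_Yg^{-1}\cong A_Y$, a complete standard parabolic subgroup and so $lcH$-slender by hypothesis; the stabilizer of a higher-dimensional cell is a finite extension of an intersection of such vertex stabilizers, which by Theorem \ref{Int} (property (Int+--)) is a parabolic subgroup $gA_Xg^{-1}$, and Proposition \ref{IntVert} forces $|X|<|V(\Gamma)|$. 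Since $\mathcal{B}$ is closed under passing to standard parabolic subgroups, $A_X$ is $lcH$-slender by the induction hypothesis, so all cell stabilizers of $C_\Gamma$ are $lcH$-slender. By Proposition \ref{FC}, every ${\rm F}\mathcal{C}'$-subgroup of $A_\Gamma$ lies in a complete parabolic subgroup, that is, fixes a vertex of $C_\Gamma$; hence the Main Theorem of \cite{MoellerVarghese}, applied to the action of $A_\Gamma$ on $C_\Gamma$, shows that $A_\Gamma$ is $lcH$-slender. (Alternatively, part (2) can be extracted directly from part (1): every locally compact Hausdorff group $L$ has an open almost connected subgroup $U$ by van Dantzig's theorem, $\psi(U)$ lies in a complete parabolic subgroup $P\cong A_Y$ which is $lcH$-slender, so $\psi$ is continuous on the open subgroup $\psi^{-1}(P)$ and therefore continuous.)

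The step I expect to be the main obstacle is the bookkeeping for the cell stabilizers in part (2): one needs that not only the vertex stabilizers but all cell stabilizers of $C_\Gamma$ are $lcH$-slender, and this is exactly where Theorem \ref{Int}, Proposition \ref{IntVert} and the stability of $\mathcal{B}$ under standard parabolic subgroups enter, together with the requirement that the induction on $|V(\Gamma)|$ be arranged so that these stabilizers correspond to strictly smaller Artin groups. The remaining points to verify carefully are that the action of $A_\Gamma$ on $C_\Gamma$ meets the precise hypotheses of the Main Theorem of \cite{MoellerVarghese} (finite dimensionality and the CAT(0) property of $C_\Gamma$, established in Section 4, and the absence of inversions), and that almost connected locally compact Hausdorff groups really do have property ${\rm F}\mathcal{C}'$ — the ingredient that plays, for cube complexes, the role Alperin's theorem plays for trees.
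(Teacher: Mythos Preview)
Your approach to part (1) has a real gap. You assert that every almost connected locally compact Hausdorff group has property ${\rm F}\mathcal{C}'$ and cite \cite{CapraceMarquis}, but that reference (as the paper itself records) only treats finite, divisible, and \emph{compact} groups. There is no ready-made cube-complex analogue of Alperin's theorem; producing a fixed point for an almost connected $L$ acting on $C_\Gamma$ is precisely what the Main Theorem of \cite{MoellerVarghese} is for, and that theorem does not give unconditional local ellipticity --- it yields a global fixed point only once four hypotheses on the action are verified. Conditions (1)--(3) hold for any cellular action on a finite-dimensional CAT(0) cube complex, but condition (4) (the finite-intersection property for the family $\{{\rm Fix}(\Phi\circ\psi(l)):l\in L\}$) is exactly where the class-$\mathcal{B}$ assumption enters, via Proposition~\ref{globalfixedpoint}. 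The paper proves part (1) by checking these four conditions; your shortcut hides the entire substance of the argument inside an unproved black box, and you yourself flag at the end that this is the point that ``remains to verify carefully''.

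For part (2), your parenthetical alternative is essentially the paper's argument and is correct: take an open almost connected subgroup $U\le L$ (the preimage of a compact open subgroup of $L/L^\circ$), apply part (1) to land $\psi(U)$ in a complete parabolic $P$, and use $lcH$-slenderness of $P$ to force $\ker\psi\cap U$ open. The paper organises this in two stages (first $L^\circ$, then a compact open subgroup of the totally disconnected quotient), but the content is the same. Your primary inductive approach, by contrast, is both unnecessary and slightly off: the action on $C_\Gamma$ is strongly cellular, so the stabilizer of any cube is simply the stabilizer $aA_{\Delta_1}a^{-1}$ of its minimal vertex --- no finite extensions, no appeal to Theorem~\ref{Int} is needed --- and it is unclear which statement in \cite{MoellerVarghese} you intend when you say its Main Theorem ``shows that $A_\Gamma$ is $lcH$-slender'', since the Main Theorem recorded here is a fixed-point result, not a slenderness criterion.
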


In particular principle RP holds for Artin groups in the class $\mathcal{B}$ for the property of being $lcH$-slender.

\section{Groups associated to edge-labeled graphs}
	
In this section we review the basics of simplicial graphs, Coxeter groups and Artin groups, which are relevant for our applications.

\subsection{Graphs} We will be working with non-empty simplicial graphs. The basic, standard definitions can be found in \cite{Diestel}. 

Since the definition of the star and the link of a vertex sometimes vary we  recall their definitions. Given a graph $\Gamma=(V,E)$ and a vertex $v\in V$ we define two subgraphs of $\Gamma$, the \textit{star} of $v$, denoted by st$(v)$ and the \textit{link} of $v$ denoted by \textit{lk}$(v)$ in the following way: The star of $v$ is the subgraph generated by all vertices connected to $v$ and $v$, i.e. the subgraph induced by $\{w\in V| w=v\textit{ or } \{v,w\}\in E\}$. We obtain the link of $v$ from the star of $v$ by removing the vertex $v$ and all edges that have $v$ as an element.  Recall, if $X\subseteq V$ is a subset of the vertex set, then the subgraph \textit{generated or induced} by $X$, denoted by $\langle X\rangle$, is defined as the graph $(X,F)$, where $\left\{v,w\right\}\in F$ if and only if $\left\{v,w\right\}\in E$. 
\begin{center}
	\begin{tikzpicture}[scale=0.8]
		\coordinate (AB) at (-10,0);
		\coordinate[label=above: $v$] (AC) at (-8,0);
		\coordinate (AD) at (-5.5,0);
		\coordinate (AE) at (-4,0);
		\coordinate[label=above: $w$] (AF) at (-2,0);
		\coordinate (AG) at (2,0);
		\coordinate (AH) at (6,0);
		\coordinate (AI) at (8,0);
		\coordinate (AJ) at (10,0);
		\coordinate (AK) at (-9,2);
		\coordinate (AL) at (-7,2);
		\coordinate (AM) at (-3,2);
		\coordinate (AN) at (-1,2);
		\coordinate (AO) at (-9,-2);
		\coordinate (AP) at (-7,-2);
		\coordinate (AQ) at (-3,-2);
		\coordinate (AR) at (-1,-2);
		\coordinate (AS) at (2,4);
		\coordinate (AT) at (2,-4);
		\coordinate (AU) at (1,-5);
		\coordinate (AV) at (2,-6);
		\coordinate (AW) at (3,-5);
		\coordinate (AX) at (3,-6);
		\coordinate (AY) at (1,5);
		\coordinate (AZ) at (3,5);
		\coordinate (BA) at (1.5,6);
		\coordinate (BB) at (2.5,6);
		\coordinate (BC) at (6,2);
		\coordinate (BD) at (6,4);
		\coordinate (BE) at (5,2);
		\coordinate (BF) at (7,2);
		\coordinate (BG) at (6,-2);
		\coordinate (BH) at (8,1.5);
		\coordinate (BI) at (8,-1.5);
		\coordinate (BJ) at (9,1);
		\coordinate (BK) at (9,-1);
		
		\fill[color=blue] (AB) circle (2pt);
		\fill[color=blue] (AC) circle (2pt);
		\fill[color=blue] (AD) circle (2pt);
		\fill[color=red] (AE) circle (2pt);
		\fill (AF) circle (2pt);
	
		\fill[color=red] (AM) circle (2pt);
		\fill[color=red] (AN) circle (2pt);
		\fill (AO) circle (2pt);
		\fill[color=blue] (AP) circle (2pt);
		\fill (AQ) circle (2pt);
		\fill[color=red] (AR) circle (2pt);

		\draw[color=blue] (AB) -- (AC);
		\draw[color=blue] (AB) -- (AK);
		\draw (AB) -- (AO);
		\draw[color=blue] (AC) -- (AD);
		\draw[color=blue] (AC) -- (AL);
		\draw[color=blue] (AC) -- (AP);
		\draw (AO) -- (AP);
		\draw[color=blue] (AK) -- (AC);
		
		\draw (AD) -- (AE);
		\draw[color=red] (AE) -- (AM);
		\draw (AE) -- (AF);
		\draw (AE) -- (AQ);
		\draw (AQ) -- (AR);
		\draw (AF) -- (AN);
		\draw (AF) -- (AR);
		\draw (AM) -- (AF);

		\draw[color=blue] (-8,2) -- (-8,2) node[above]{$st(v)$};
		\draw[color=red] (-2,2) -- (-2,2) node[above]{$lk(w)$};
	
	\end{tikzpicture}
\end{center}
 We say a graph $\Gamma=(V,E)$ is \textit{complete} if every pair of vertices $v, w\in V$, $v\neq w$ is connected by an edge, that is $\left\{v,w\right\}\in E$. We often denote the vertex set of $\Gamma$ by $V(\Gamma)$ and the edge set by $E(\Gamma)$.

\subsection{Coxeter and Artin groups}

Let $\Gamma$ be a finite simplicial graph with an edge-labeling $m\colon E(\Gamma)\to\mathbb{N}_{\geq 2}$. 
\begin{enumerate}
\item The \emph{Coxeter group} $W_\Gamma$ is defined as
$$W_\Gamma:=\langle V(\Gamma)\mid v^2, (vw)^{m(\left\{v,w\right\})} \text{ for all }v\in V(\Gamma) \text{ and whenever} \left\{v,w\right\}\in E(\Gamma)\rangle\,.$$
\item  The \emph{Artin group} $A_\Gamma$ is defined as
$$A_\Gamma:= \langle V(\Gamma)\mid \underbrace{vwv\ldots}_{\substack{m(\left\{v,w\right\})-letters}}=\underbrace{wvw\ldots}_{\substack{m(\left\{v,w\right\})-letters}}\text{ whenever } \left\{v,w\right\}\in E(\Gamma)\rangle\,.$$
\end{enumerate}
In particular, $W_\Gamma$ is the quotient of $A_\Gamma$ by the subgroup normally generated by the set $\{v^2\mid v\in
\break
V(\Gamma)\}$. The epimorphism $\theta\colon A_\Gamma\to W_\Gamma$ induced by $v\mapsto v$ is called \emph{natural projection} and the kernel of $\theta$, denoted by $CA_\Gamma$, is called \emph{colored Artin group}. 

Given a subset $X\subset V(\Gamma)$ of the vertex set, we write $A_X$ resp. $W_X$ for the subgroup generated by $X$, and we set $CA_X=CA_\Gamma \cap A_X$. The group $A_X$ is called \emph{standard parabolic subgroup} of $A_\Gamma$ resp. $W_X$ is called \emph{standard parabolic subgroup} of $W_\Gamma$. 

\begin{proposition}
Let $\Gamma$ be a finite simplicial graph with edge-labeling $m\colon E(\Gamma)\to\left\{2,3,\ldots\right\}$ and $A_\Gamma$ resp. $W_\Gamma$ be the associated Artin resp. Coxeter group.
Let $X$ be a subset of $V(\Gamma)$.
\begin{enumerate}
\item The subgroup $W_X$ is canonically isomorphic to $W_{\langle X\rangle}$ \cite{Bourbaki2}.
\item The subgroup $A_X$ is canonically isomorphic to $A_{\langle X\rangle}$ \cite{Vanderlek}.
\end{enumerate}
\end{proposition}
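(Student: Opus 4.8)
The plan is to notice that both assertions say that a \emph{canonical} homomorphism is an isomorphism, and that the only nontrivial point is injectivity. For any $X\subseteq V(\Gamma)$ the defining relators of $W_{\langle X\rangle}$ (resp.\ of $A_{\langle X\rangle}$) form a subset of the defining relators of $W_\Gamma$ (resp.\ of $A_\Gamma$), because the edge-labelling of $\langle X\rangle$ is the restriction of that of $\Gamma$; hence $v\mapsto v$ gives well-defined homomorphisms $W_{\langle X\rangle}\to W_\Gamma$ and $A_{\langle X\rangle}\to A_\Gamma$ with images $W_X$ and $A_X$. So I have canonical surjections $W_{\langle X\rangle}\twoheadrightarrow W_X$ and $A_{\langle X\rangle}\twoheadrightarrow A_X$, and the whole content is to show they are injective.

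For (1) I would use the Tits geometric representation. Set $U=\bigoplus_{v\in V(\Gamma)}\mathbb{R}\alpha_v$ and let $B$ be the symmetric bilinear form with $B(\alpha_v,\alpha_v)=1$, $B(\alpha_v,\alpha_w)=-\cos(\pi/m(\{v,w\}))$ when $\{v,w\}\in E(\Gamma)$, and $B(\alpha_v,\alpha_w)=-1$ otherwise; then $v\mapsto(x\mapsto x-2B(x,\alpha_v)\alpha_v)$ defines a faithful action of $W_\Gamma$ on $U$. The coordinate subspace $U_X=\bigoplus_{v\in X}\mathbb{R}\alpha_v$ is invariant under each $v\in X$, and since the labels $m(\{v,w\})$ with $v,w\in X$ are computed inside $\langle X\rangle$, the action of $W_X$ on $U_X$ is exactly the Tits representation of $W_{\langle X\rangle}$. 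Thus the composite $W_{\langle X\rangle}\twoheadrightarrow W_X\to \mathrm{GL}(U_X)$ is the faithful geometric representation of $W_{\langle X\rangle}$, which forces the first arrow to be injective.

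The hard part is (2), which is van der Lek's theorem. One useful reduction: by (1) the kernel of $A_{\langle X\rangle}\twoheadrightarrow A_X$ maps trivially under the natural projection $\theta$, hence lies in the colored Artin group $CA_{\langle X\rangle}$, so it suffices to show $CA_{\langle X\rangle}\to CA_X$ is injective, and an induction on $|V(\Gamma)\setminus X|$ lets me assume $X=V(\Gamma)\setminus\{s\}$. Even after this reduction I see no uniform algebraic argument — there is no homomorphism $A_\Gamma\to A_{\langle X\rangle}$ splitting the inclusion in general, and Garside normal forms are available only in spherical type — so I would follow van der Lek and argue topologically: realize $A_\Gamma$ as the fundamental group of the quotient by $W_\Gamma$ of the complexified complement of the reflection arrangement of $W_\Gamma$ inside the Tits cone, with the standard generators appearing as meridians of the walls; the strata indexed by $X$ cut out a sub-arrangement lying in a coordinate subspace, and $\pi_1$-injectivity follows from the fibration (or deformation-retraction) structure relating the two complements, exactly as in Brieskorn's computations in the spherical case. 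In the write-up I expect (1) is simply attributed to \cite{Bourbaki2} and (2) to \cite{Vanderlek}; the genuine obstacle — and the reason (2) is cited rather than reproved — is precisely this injectivity statement for Artin groups, which has no short elementary proof.
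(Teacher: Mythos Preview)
The paper does not prove this proposition at all: it is stated with the citations \cite{Bourbaki2} and \cite{Vanderlek} attached to the two items and then used as a black box. You correctly anticipated this in your final paragraph.

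Your sketch therefore goes well beyond what the paper does, and it is sound. For (1) your argument via the faithfulness of the Tits geometric representation is exactly the standard one and is correct as written. For (2) your reduction of the kernel to the colored subgroup (using part (1)) and the induction to $X=V(\Gamma)\setminus\{s\}$ are valid simplifications, and you are right that no short algebraic argument is known; the topological approach you outline (hyperplane complement in the complexified Tits cone, with $\pi_1$-injectivity coming from a fibration/retraction between the complements) is indeed van der Lek's method. One minor remark: the paper later records (from \cite{GodelleParisB}) that $CA_X$ is a retract of $CA_\Gamma$, which would immediately give injectivity on the colored level --- but that result is itself built on top of van der Lek's theorem, so it cannot be used here without circularity, and you were right not to invoke it.
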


As a consequence we get:

\begin{corollary}
Let $\Gamma$ be a finite simplicial graph with edge-labeling $m\colon E(\Gamma)\to\left\{2,3,\ldots\right\}$. Let $X$ be a subset of $V(\Gamma)$. Then the subgroup $CA_X$ of $CA_\Gamma$ is canonically isomorphic to $CA_{\langle X\rangle}$.
\end{corollary}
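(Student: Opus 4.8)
The plan is to obtain this as a formal consequence of the two parts of the preceding Proposition, using that the natural projection $\theta\colon A_\Gamma\to W_\Gamma$ is compatible with passage to standard parabolic subgroups. So the corollary should require no new geometric or combinatorial input, only a diagram chase.

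First I would unwind the definition. By construction $CA_X=CA_\Gamma\cap A_X=\{g\in A_X:\theta(g)=1\}$, so $CA_X$ is exactly the kernel of the restriction $\theta|_{A_X}\colon A_X\to W_\Gamma$. Since $\theta$ fixes each generator $v\in V(\Gamma)$, the image of $\theta|_{A_X}$ is the standard parabolic subgroup $W_X$ of $W_\Gamma$, and hence $CA_X=\ker\!\big(\theta|_{A_X}\colon A_X\to W_X\big)$ with the target map surjective. Likewise, writing $\theta_{\langle X\rangle}\colon A_{\langle X\rangle}\to W_{\langle X\rangle}$ for the natural projection of the Artin group on $\langle X\rangle$, one has $CA_{\langle X\rangle}=\ker(\theta_{\langle X\rangle})$.

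Next I would bring in the canonical isomorphisms $\varphi\colon A_X\to A_{\langle X\rangle}$ from part (2) of the Proposition and $\psi\colon W_X\to W_{\langle X\rangle}$ from part (1); both are the isomorphisms determined by $v\mapsto v$ on the generating set $X$. The crucial verification is that the square with sides $\varphi$, $\theta|_{A_X}$, $\theta_{\langle X\rangle}$, $\psi$ commutes, i.e. $\theta_{\langle X\rangle}\circ\varphi=\psi\circ\theta|_{A_X}$. Both sides are homomorphisms out of $A_X$, so it suffices to compare them on the generators $v\in X$, where each sends $v$ to $v$; hence they agree on all of $A_X$. Restricting the isomorphism $\varphi$ to kernels then yields an isomorphism from $CA_X=\ker(\theta|_{A_X})$ onto $\ker(\theta_{\langle X\rangle})=CA_{\langle X\rangle}$, and this isomorphism is canonical since it is the restriction of the canonical isomorphism $\varphi$.

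I do not expect any serious difficulty; the argument is purely formal. The only points that need a word of justification — and so constitute the ``main obstacle'', such as it is — are that $\theta|_{A_X}$ really surjects onto $W_X$ (rather than landing in a larger subgroup), and that the isomorphisms furnished by the Proposition are precisely the ones extending $v\mapsto v$, since these two facts are exactly what make the commutativity check on generators legitimate. Both are immediate from the explicit descriptions of the maps involved, so no further input is required.
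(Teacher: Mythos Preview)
Your argument is correct and is exactly the formal diagram chase the paper has in mind: the corollary is stated without proof, merely as ``As a consequence we get'', so you have supplied the obvious justification by restricting the canonical isomorphism $A_X\to A_{\langle X\rangle}$ to kernels via the commuting square with $\theta|_{A_X}$ and $\theta_{\langle X\rangle}$. Nothing further is needed.
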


For colored Artin groups there exists a very useful tool to project to standard parabolic subgroups:

\begin{proposition}[\cite{GodelleParisB}]
Let $A_\Gamma$ denote an Artin group and $X\subset V(\Gamma)$ a subset. Then $CA_X$ is a retract of $CA_\Gamma$ in the sense that there is a homomorphism $\pi_X\colon CA_\Gamma\to CA_X$ satisfying $\pi_X(a)=a$ for all $a\in CA_X$.
\end{proposition}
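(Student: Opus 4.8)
I will sketch a construction of the retraction $\pi_X$ by a direct Reidemeister--Schreier analysis of $CA_\Gamma$ inside $A_\Gamma$. The conceptual point is that, although there is in general no group retraction $A_\Gamma\to A_X$ (the naive map sending $V(\Gamma)\setminus X$ to $1$ already fails on odd\nobreakdash-labelled edges, and likewise for $W_\Gamma\to W_X$), such a retraction does exist after passing to the colored subgroups, once the Coxeter length is used for bookkeeping.

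First I fix the set-theoretic section $\sigma\colon W_\Gamma\to A_\Gamma$ sending $w$ to the product in $A_\Gamma$ of the letters of any reduced expression of $w$; this is well defined by Matsumoto's lemma, since two reduced expressions of $w$ differ by braid moves, which are relations of $A_\Gamma$. Then $\theta\circ\sigma=\mathrm{id}_{W_\Gamma}$, the section $\sigma$ is multiplicative whenever lengths add, and it restricts on $W_X$ to the analogous section $\sigma_X\colon W_X\to A_X$. Using $\sigma(W_\Gamma)$ as a Schreier transversal of $CA_\Gamma=\ker\theta$ in $A_\Gamma$, a Reidemeister--Schreier computation shows that $CA_\Gamma$ is generated by the elements $b_{w,v}:=\sigma(w)\,v^{2}\,\sigma(w)^{-1}$ with $w\in W_\Gamma$, $v\in V(\Gamma)$, $\ell(wv)>\ell(w)$, and that a defining set of relators is obtained by rewriting against this transversal, for each edge $\{s,t\}\in E(\Gamma)$ and each $w\in W_\Gamma$, the braid relation $\mathrm{prod}(s,t;m(\{s,t\}))=\mathrm{prod}(t,s;m(\{s,t\}))$. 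The same statement applied to $CA_X\le A_X$ exhibits $CA_X$ as generated by the elements $b^{X}_{u,v}:=\sigma_X(u)\,v^{2}\,\sigma_X(u)^{-1}$ ($u\in W_X$, $v\in X$, $\ell(uv)>\ell(u)$), and under the canonical identifications recalled in the excerpt these are exactly the $b_{u,v}\in CA_\Gamma$ with $u\in W_X$.

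Now I use the parabolic coset bookkeeping: every $w\in W_\Gamma$ factors uniquely as $w=p\,u$ with $u\in W_X$ and $p$ the minimal-length representative of $wW_X$, so that $\ell(w)=\ell(p)+\ell(u)$ and $\sigma(w)=\sigma(p)\,\sigma_X(u)$. I then define $\pi_X$ on generators by
\[
\pi_X(b_{w,v}):=\begin{cases}b^{X}_{u,v}& \text{if }v\in X,\\ 1 & \text{if }v\notin X,\end{cases}
\]
where $u$ is the $W_X$-part of $w$ (one checks that $\ell(wv)>\ell(w)$ forces $\ell(uv)>\ell(u)$ when $v\in X$, so $b^{X}_{u,v}$ is indeed one of the listed generators of $CA_X$). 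Granting that this respects the Reidemeister--Schreier relators, we get a homomorphism $\pi_X\colon CA_\Gamma\to CA_X$, and it is the identity on $CA_X$ by inspection: a generator $b^{X}_{u,v}$ of $CA_X$, viewed in $CA_\Gamma$, equals $b_{u,v}$ with $u\in W_X$, whose $W_X$-part is $u$ itself and whose second index $v$ lies in $X$, so $\pi_X(b_{u,v})=b^{X}_{u,v}$.

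The main obstacle is the well-definedness check, i.e.\ that the rewritten braid relation attached to an edge $\{s,t\}$ and an element $w$ is mapped by $\pi_X$ to a consequence of the relators of $CA_X$. When $s,t\in X$ the rewritten relation is literally a $w$-translate of a Reidemeister--Schreier relator of $CA_X$; when $s,t\notin X$ one verifies that every generator appearing in it has $\pi_X$-image $1$, using the standard fact that right-multiplying a minimal coset representative by a generator outside $X$ again yields a minimal coset representative (possibly after absorbing a single generator of $X$), so that the $W_X$-part is controlled along the way. The delicate case is the mixed one, $s\in X$ and $t\notin X$, and it is genuinely delicate only when $m(\{s,t\})$ is odd --- precisely the situation in which the direct retractions $A_\Gamma\to A_X$ and $W_\Gamma\to W_X$ break down: here one must trace the hexagon/octagon of the dihedral group $W_{\{s,t\}}$ through the coset decomposition and check that the resulting alternating product of images of the $b_{\cdot,\bullet}$'s telescopes to $1$ in $CA_X$. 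Equivalently, and perhaps more transparently, the whole construction can be phrased as a functor $\rho\colon\mathcal G_\Gamma\to\mathcal G_X$ between the associated Deligne groupoids on object sets $W_\Gamma$ and $W_X$ (acting on objects by $w\mapsto u$ and on wall-crossing morphisms as above), with $CA_\Gamma=\mathcal G_\Gamma(1,1)$, $CA_X=\mathcal G_X(1,1)$ and $\pi_X=\rho|_{\mathcal G_\Gamma(1,1)}$; in this language the relators to be checked are exactly the two-generator cells of $\mathcal G_\Gamma$. In either formulation the canonical identifications $A_X\cong A_{\langle X\rangle}$, $W_X\cong W_{\langle X\rangle}$ and hence $CA_X\cong CA_{\langle X\rangle}$ from the excerpt are what keep the bookkeeping internally consistent.
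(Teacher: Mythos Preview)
Your assignment of $\pi_X$ on the Reidemeister--Schreier generators does not extend to a homomorphism: the ``delicate'' mixed odd case you flag actually fails. Take $V(\Gamma)=\{s,t\}$ with $m(\{s,t\})=3$ and $X=\{s\}$. Rewriting the braid relator $sts\,t^{-1}s^{-1}t^{-1}$ starting from the transversal element $\sigma(s)=s$ produces, step by step, $b_{1,s}\cdot 1\cdot 1\cdot b_{ts,t}^{-1}\cdot 1\cdot 1$, hence the relation $b_{1,s}=b_{ts,t}$ in $CA_\Gamma$. (This is genuine: with $\Delta=sts=tst$ one has $ts=\Delta t^{-1}$, so $b_{ts,t}=(ts)t^{2}(ts)^{-1}=\Delta t^{2}\Delta^{-1}=s^{2}=b_{1,s}$, using $\Delta t=s\Delta$.) But your rule sends $b_{1,s}\mapsto s^{2}$ (since $s\in X$ and the $W_X$-part of $1$ is $1$) and $b_{ts,t}\mapsto 1$ (since $t\notin X$), so the relator is sent to $s^{2}\neq 1$ in $CA_X\cong\mathbb Z$. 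The asserted telescoping does not occur; in the groupoid picture, your proposed functor does not carry odd dihedral $2$-cells with exactly one vertex in $X$ to relations of $\mathcal G_X$. The underlying reason is that in an odd dihedral parabolic the reflection types $s$ and $t$ are conjugate, so the ``colour'' $v$ of a generator $b_{w,v}$ is not an invariant of the element of $CA_\Gamma$ it represents, and a rule that kills all $b_{w,v}$ with $v\notin X$ cannot be consistent.

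For comparison, the paper does not prove this proposition but quotes it from \cite{GodelleParisB}, where the retraction is constructed topologically: one builds a CW model for $CA_\Gamma$ (a Salvetti-type complex covering the usual one) together with an explicit cellular retraction onto the corresponding sub\-complex for $X$, and $\pi_X$ is the induced map on $\pi_1$. On Reidemeister--Schreier generators the resulting map is substantially more intricate than ``kill $v\notin X$, keep the $W_X$-part otherwise''; the dihedral example above shows that any correct formula must sometimes send a generator $b_{w,v}$ with $v\notin X$ to a nontrivial element of $CA_X$.
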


We say that $A_X$ resp. $W_X$ is a \emph{complete} standard parabolic subgroup if  the subgraph $\langle X\rangle$ is complete. From our point of view the puzzle pieces of a general Artin group are complete standard parabolic subgroups, in the sense that using amalgamation one can decompose any Artin group in complete standard parabolic subgroups.

\begin{lemma}\label{decomposition}
Let $A_\Gamma$ be an Artin group and $\Gamma_1$, $\Gamma_2$ two induced subgraphs of $\Gamma$. If $\Gamma_1\cup \Gamma_2=\Gamma$, then  
$$A_\Gamma\cong A_{\Gamma_1}*_{A_{\Gamma_1\cap\Gamma_2}} A_{\Gamma_2}\,.$$
\end{lemma}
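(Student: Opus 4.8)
\noindent\emph{Proof proposal.} The plan is to establish the isomorphism by comparing presentations, with van der Lek's theorem (the Proposition above) as the only nontrivial input. First I would fix notation: write $V_i = V(\Gamma_i)$ and $E_i = E(\Gamma_i)$ for $i=1,2$, and $V_0 = V(\Gamma_1\cap\Gamma_2)$. Since $\Gamma_1$ and $\Gamma_2$ are \emph{induced} subgraphs of $\Gamma$, one checks directly that $\Gamma_1\cap\Gamma_2$ is the subgraph of $\Gamma$ induced on $V_1\cap V_2$ (so $V_0 = V_1\cap V_2$ and $E(\Gamma_1\cap\Gamma_2) = E_1\cap E_2$), and that the hypothesis $\Gamma_1\cup\Gamma_2=\Gamma$ is equivalent to $V(\Gamma)=V_1\cup V_2$ together with $E(\Gamma)=E_1\cup E_2$. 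In particular $A_{\Gamma_1\cap\Gamma_2}$ is a standard parabolic subgroup of each $A_{\Gamma_i}$, and by the Proposition above the canonical homomorphism $A_{\Gamma_1\cap\Gamma_2}\to A_{\Gamma_i}$ sending each generator to the like-named generator is injective; hence the amalgam $G := A_{\Gamma_1}*_{A_{\Gamma_1\cap\Gamma_2}}A_{\Gamma_2}$ along these two embeddings is a well-defined group.

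Next I would compute a presentation of $G$. For an induced subgraph $\Lambda$ of $\Gamma$ let $R(\Lambda)$ be the set of defining braid relators of $A_\Lambda$, one for each edge of $\Lambda$, so that $A_\Lambda = \langle V(\Lambda)\mid R(\Lambda)\rangle$. The standard presentation of an amalgamated product yields
$$G \;=\; \langle\, V_1\sqcup V_2 \ \mid\ R(\Gamma_1),\ R(\Gamma_2),\ \iota_1(v)=\iota_2(v)\ (v\in V_0)\,\rangle ,$$
where $\iota_i\colon V_0\hookrightarrow V_i$ denote the inclusions of vertex (hence generator) sets. Using the relators $\iota_1(v)=\iota_2(v)$ to identify the two copies of each $v\in V_0$ (a Tietze transformation), the generating set collapses to $V_1\cup V_2 = V(\Gamma)$ and the relator set to $R(\Gamma_1)\cup R(\Gamma_2)$. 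Since $E(\Gamma)=E_1\cup E_2$ we have $R(\Gamma_1)\cup R(\Gamma_2) = R(\Gamma)$, so $G = \langle V(\Gamma)\mid R(\Gamma)\rangle = A_\Gamma$. Keeping track of the identifications shows that this isomorphism carries each generator of $G$ to the like-named generator of $A_\Gamma$, and therefore restricts to the canonical inclusions $A_{\Gamma_i}\hookrightarrow A_\Gamma$; this is the precise content of the displayed statement.

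I do not expect a genuine obstacle here: the argument is essentially Tietze bookkeeping. The one point requiring input from outside the presentation calculus is that the maps $A_{\Gamma_1\cap\Gamma_2}\to A_{\Gamma_i}$ are injective, so that the right-hand side is truly an amalgam of $A_{\Gamma_1}$ and $A_{\Gamma_2}$ over a common subgroup rather than merely an abstract pushout of groups; this is exactly van der Lek's theorem. (If one only wanted an isomorphism of abstract groups, the presentation computation above goes through verbatim without invoking injectivity, since the pushout of the diagram $A_{\Gamma_1}\leftarrow A_{\Gamma_1\cap\Gamma_2}\to A_{\Gamma_2}$ always has the displayed presentation.)
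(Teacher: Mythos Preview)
Your argument is correct and is exactly the approach the paper has in mind: the paper simply states that the lemma ``follows easily by analyzing the presentation of $A_\Gamma$ and the canonical presentation of the amalgam,'' and your Tietze computation carries this out in detail. Your closing remark distinguishing the pushout from the amalgam, and isolating van der Lek's theorem as the input needed for the latter interpretation, is a useful clarification that the paper leaves implicit.
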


The proof of this lemma follows easily by analyzing the presentation of $A_\Gamma$ and the canonical presentation of the amalgam.

In particular, if there exist two vertices $v,w\in V(\Gamma)$ such that $\left\{v,w\right\}\notin E(\Gamma)$, then 
\begin{enumerate}
\item $A_\Gamma\cong A_{st(v)}*_{A_{lk(v)}} A_{ V-\left\{v\right\}}$,
\item $A_\Gamma\cong A_{V-\left\{v\right\}}*_{A_{V-\left\{v,w\right\}}} A_{V-\left\{w\right\}}$.
\end{enumerate}

Before we proceed to introduce further properties of Coxeter and Artin groups we discuss one example of a decomposition into an amalgam. Let $\Gamma$ be as follows:

\begin{center}
	
	\begin{tikzpicture}
		
	\fill[color=black] (0,0) circle (2pt) node[below]{$v$};	
	\fill[color=black] (2,0) circle (2pt) node[below]{$w$};
	\fill[color=black] (2,2) circle (2pt) node[above]{$x$};
	\fill[color=black] (0,2) circle (2pt) node[above]{$y$};
	
	\draw[color=black] (0,0) -- (2,0);
	\node at (1, -0.2) {$2$}; 
	\draw[color=black] (2,0) -- (2,2);
	\node at (2.2, 1) {$3$};
	\draw[color=black] (2,2) -- (0,2);
	\node at (1,2.2) {$4$};
	\draw[color=black] (0,0) -- (0,2);
	\node at (-0.2, 1) {$5$};

	\end{tikzpicture}
\end{center}	

\noindent
The associated Artin group is given by the presentation
$$A_\Gamma=\langle v,w,x,y\mid vw=wv, wxw=xwx, xyxy=yxyx, yvyvy=vyvyv\rangle\,.$$
The vertices $v$ and $x$ in $\Gamma$ are not connected by an edge, hence
$A_\Gamma\cong A_{st(v)}\ast_{A_{lk(v)}} A_{ V-\left\{v\right\}}\,,$
where
\begin{gather*} 
A_{st(v)}=\langle v,w,y\mid vw=wv, vyvyv=yvyvy\rangle\,,\ A_{lk(v)}=\langle w,y\rangle\,,\\
A_{V-\left\{v\right\}}=\langle w,x,y\mid wxw=xwx, xyxy=yxyx\rangle\,.
\end{gather*}
We continue to decompose the amalgamated parts until the special subgroups are complete. We obtain
\begin{gather*}
A_\Gamma\cong(\langle v,w\mid vw=wv\rangle\ast_{\langle v\rangle} \langle v,y\mid vyvyv=yvyvy\rangle)\ast_{\langle w,y\rangle}\\
(\langle w,x\mid wxw=xwx\rangle\ast_{\langle x\rangle}\langle x,y\mid xyxy=yxyx\rangle)\,.
\end{gather*}
 Hence Lemma \ref{decomposition} gives us an algebraic tool to reduce some questions about Artin groups to complete standard parabolic subgroups.

\subsection{Parabolic subgroups of Coxeter and Artin groups}
Coxeter groups are fundamental, well understood objects in group theory, but there are many open questions concerning Artin groups. It is conjectured that most properties of Coxeter groups carry over to Artin groups. Remember that the conjugates of standard parabolic subgroups in $A_\Gamma$ resp. $W_\Gamma$ are called \emph{parabolic} subgroups.

\begin{proposition}\cite[proof of Lemma 3.3]{Qi}\label{CoxeterVertices}
Let $W_\Gamma$ be a Coxeter group and $gW_\Omega g^{-1}$, $hW_\Delta h^{-1}$ be two parabolic subgroups such that $gW_\Omega g^{-1}\subset hW_\Delta h^{-1}$.
Then the cardinalities of $V(\Omega)$ and $V(\Delta)$ satisfy $|V(\Omega)|\le |V(\Delta)|$ and, if $|V(\Omega)|= |V(\Delta)|$, then $gW_\Omega g^{-1}= hW_\Delta h^{-1}$.
\end{proposition}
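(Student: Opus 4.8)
The plan is to exploit the geometric (reflection) representation of $W_\Gamma$ and to read off the rank $|V(\Omega)|$ of a parabolic subgroup as the dimension of a canonically associated subspace. Recall that $W_\Gamma$ acts faithfully on a real vector space $V$ whose basis is the family of simple roots $\{\alpha_s : s\in V(\Gamma)\}$, that this family is linearly independent, and that the set $T$ of all reflections of $W_\Gamma$ is in bijection with the positive roots of the associated root system $\Phi$; write $\alpha_t$ for the root line attached to $t\in T$. For an arbitrary parabolic subgroup $P$ I would attach the linear subspace $V_P:=\mathrm{span}\{\alpha_t : t\in T\cap P\}\subseteq V$. Since everything in sight is $W_\Gamma$-equivariant one has $V_{wPw^{-1}}=w\cdot V_P$, so the argument is conjugation-invariant and I will never need to normalise $h$ to the identity.

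The computational heart is the pair of identities, valid for any standard parabolic $W_X$,
$$ T\cap W_X = T_X \qquad\text{and}\qquad \Phi\cap\mathrm{span}\{\alpha_s : s\in X\}=\Phi_X, $$
where $T_X$ and $\Phi_X$ denote the reflections and roots of the sub-Coxeter-system $(W_X,X)$; these are standard facts in the theory of root systems of Coxeter groups. The first says that a reflection of $W_\Gamma$ which happens to lie in $W_X$ is already a reflection of $W_X$, and the second says that the only roots inside $\mathrm{span}\{\alpha_s:s\in X\}$ are the roots of $W_X$. Together with the linear independence of the simple roots they yield both $\dim V_{W_X}=|X|$ (indeed $V_{W_X}=\mathrm{span}\{\alpha_s:s\in X\}$) and the intrinsic description $T\cap W_X=\{t\in T : \alpha_t\in V_{W_X}\}$. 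Conjugating by $g$, which acts on $V$ as a linear isomorphism permuting $\Phi$, transports this to an arbitrary parabolic $P=gW_Xg^{-1}$: one obtains $\dim V_P=|X|$ and $T\cap P=\{t\in T : \alpha_t\in V_P\}$. I expect that pinning down (or correctly invoking) these two root-system identities is the main obstacle; the remainder is bookkeeping.

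With these tools the proposition falls out quickly. Writing $P_1=gW_\Omega g^{-1}$ and $P_2=hW_\Delta h^{-1}$, the inclusion $P_1\subseteq P_2$ forces $T\cap P_1\subseteq T\cap P_2$, hence $V_{P_1}\subseteq V_{P_2}$, and therefore
$$ |V(\Omega)|=\dim V_{P_1}\le \dim V_{P_2}=|V(\Delta)|, $$
which is the first assertion. For the equality case, if $|V(\Omega)|=|V(\Delta)|$ then $\dim V_{P_1}=\dim V_{P_2}$ combined with $V_{P_1}\subseteq V_{P_2}$ and finite-dimensionality gives $V_{P_1}=V_{P_2}$. Feeding this into the intrinsic description yields $T\cap P_1=\{t:\alpha_t\in V_{P_1}\}=\{t:\alpha_t\in V_{P_2}\}=T\cap P_2$. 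Finally, since each $W_X$ is generated by its reflections $T_X$ (because $X\subseteq T_X\subseteq W_X$), every parabolic subgroup is generated by its own reflections, i.e. $P_i=\langle T\cap P_i\rangle$; hence $P_1=\langle T\cap P_1\rangle=\langle T\cap P_2\rangle=P_2$, which completes the proof.
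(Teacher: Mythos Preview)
The paper does not actually prove this proposition: it is stated with a citation to \cite{Qi} and then used as a black box in the proof of the Artin-group analogue (Proposition~\ref{ArtinVertices}). So there is no ``paper's own proof'' to compare against here.

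Your argument is correct. The two facts you single out as the computational heart, namely $T\cap W_X=T_X$ and $\Phi\cap\mathrm{span}\{\alpha_s:s\in X\}=\Phi_X$, are indeed the standard ingredients (they appear, for instance, in Humphreys~\cite{Humphreys} or can be extracted from Bourbaki~\cite{Bourbaki2}), and once they are in place the passage to arbitrary parabolics via $W_\Gamma$-equivariance of the geometric representation is routine. The dimension count gives the inequality, and in the equality case the intrinsic description $T\cap P=\{t\in T:\alpha_t\in V_P\}$ together with $P=\langle T\cap P\rangle$ finishes things off exactly as you say. One minor terminological quibble: you call $\alpha_t$ a ``root line'', but you then treat it as a vector when forming spans; it would be cleaner to fix $\alpha_t$ as the positive root associated to $t$ and speak of vectors throughout, though this does not affect the argument.
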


We now move to Proposition \ref{IntVert} from the introduction, namely the corresponding version of the above proposition for Artin groups.

\begin{proposition}\label{ArtinVertices}
Let $A_\Gamma$ be an Artin group and $gA_\Omega g^{-1}$, $hA_\Delta h^{-1}$ be two parabolic subgroups such that $gA_\Omega g^{-1}\subset hA_\Delta h^{-1}$.
Then $|V(\Omega)|\le |V(\Delta)|$ and, if $|V(\Omega)|= |V(\Delta)|$, then $gA_\Omega g^{-1}= hA_\Delta h^{-1}$.
\end{proposition}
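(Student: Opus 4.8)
The plan is to deduce the statement from its Coxeter-group analogue, Proposition~\ref{CoxeterVertices}, via the natural projection $\theta\colon A_\Gamma\to W_\Gamma$, and then to recover the Artin conclusion by working inside the colored Artin group $CA_\Gamma=\ker\theta$, where the retractions $\pi_X$ are available. For the inequality, apply $\theta$ to the inclusion $gA_\Omega g^{-1}\subseteq hA_\Delta h^{-1}$: since $\theta(A_\Omega)=W_\Omega$ and $\theta(A_\Delta)=W_\Delta$ this yields $\theta(g)W_\Omega\theta(g)^{-1}\subseteq\theta(h)W_\Delta\theta(h)^{-1}$, an inclusion of parabolic subgroups of $W_\Gamma$, and Proposition~\ref{CoxeterVertices} gives $|V(\Omega)|\le|V(\Delta)|$ at once; it also gives, in the case $|V(\Omega)|=|V(\Delta)|$, the equality $\theta(g)W_\Omega\theta(g)^{-1}=\theta(h)W_\Delta\theta(h)^{-1}$. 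Applying the inequality in both directions to an equality of parabolic subgroups shows in addition that the ``rank'' $|V(\Omega)|$ of a parabolic subgroup $gA_\Omega g^{-1}$ is a well-defined invariant; this will be used freely below.

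For the equality case, conjugating by $h^{-1}$ reduces the problem to showing $P:=gA_\Omega g^{-1}=A_\Delta$ whenever $P\subseteq A_\Delta$ and $n:=|V(\Omega)|=|V(\Delta)|$. If $n=|V(\Gamma)|$ then $\Omega=\Gamma=\Delta$ and $P=gA_\Gamma g^{-1}=A_\Gamma$, so assume $n<|V(\Gamma)|$. By the previous paragraph $\theta(g)W_\Omega\theta(g)^{-1}=W_\Delta$, so $W_\Omega$ and $W_\Delta$ are conjugate standard parabolic subgroups of $W_\Gamma$; using the theory of ribbon elements for Artin groups (see \cite{GodelleParisA,GodelleParisB}), which transports standard parabolic subgroups of $A_\Gamma$ compatibly with $\theta$, choose a ribbon $\rho\in A_\Gamma$ with $\rho A_\Omega\rho^{-1}=A_\Delta$ and $\theta(\rho)=\theta(g)$. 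Then $c:=g\rho^{-1}$ lies in $\ker\theta=CA_\Gamma$ and $P=g A_\Omega g^{-1}=c\,(\rho A_\Omega\rho^{-1})\,c^{-1}=cA_\Delta c^{-1}$. Thus it is enough to prove that a colored conjugate $cA_\Delta c^{-1}$ of $A_\Delta$ (with $c\in CA_\Gamma$) that is contained in $A_\Delta$ must equal $A_\Delta$.

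To do this, intersect $cA_\Delta c^{-1}\subseteq A_\Delta$ with the normal subgroup $CA_\Gamma$: since $A_\Delta\cap CA_\Gamma=CA_\Delta$ and $c$ normalizes $CA_\Gamma$, this gives $cCA_\Delta c^{-1}\subseteq CA_\Delta$. Now apply the retraction $\pi_\Delta\colon CA_\Gamma\to CA_\Delta$: for every $a\in CA_\Delta$ the element $cac^{-1}$ already lies in $CA_\Delta$, so $cac^{-1}=\pi_\Delta(cac^{-1})=\pi_\Delta(c)\,a\,\pi_\Delta(c)^{-1}$; hence $z:=c^{-1}\pi_\Delta(c)$ centralizes $CA_\Delta$, and since $\pi_\Delta(c)=cz\in CA_\Delta$ we obtain $cCA_\Delta c^{-1}=\pi_\Delta(c)\,CA_\Delta\,\pi_\Delta(c)^{-1}=CA_\Delta$. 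Finally, $cA_\Delta c^{-1}$ is contained in $A_\Delta$, contains $cCA_\Delta c^{-1}=CA_\Delta=A_\Delta\cap\ker\theta$, and has $\theta$-image $\theta(cA_\Delta c^{-1})=W_\Delta=\theta(A_\Delta)$ because $\theta(c)=1$; an elementary diagram chase (lift $\theta$-values from $cA_\Delta c^{-1}$ and correct by an element of $A_\Delta\cap\ker\theta$) then forces $cA_\Delta c^{-1}=A_\Delta$, and therefore $P=A_\Delta$.

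The step I expect to be the main obstacle is the ribbon argument: one needs that $\theta$ maps the transporter $\{a\in A_\Gamma\mid aA_\Omega a^{-1}=A_\Delta\}$ onto the transporter $\{w\in W_\Gamma\mid wW_\Omega w^{-1}=W_\Delta\}$ — equivalently, that a conjugacy between standard parabolic subgroups of $W_\Gamma$ lifts to a conjugacy between the corresponding standard parabolic subgroups of $A_\Gamma$ by an element with prescribed image in $W_\Gamma$. Everything else is a formal manipulation with $\theta$, $CA_\Gamma$ and the retractions $\pi_X$; the crux is producing a colored conjugating element, after which the retraction does the work.
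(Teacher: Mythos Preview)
Your approach is essentially the same as the paper's: project via $\theta$ to get the inequality from Proposition~\ref{CoxeterVertices}, lift the Coxeter conjugacy back to $A_\Gamma$ to reduce to a colored conjugate $cA_\Delta c^{-1}\subseteq A_\Delta$, and then use the retraction $\pi_\Delta$ together with the short exact sequence $CA_\Delta\hookrightarrow A_\Delta\twoheadrightarrow W_\Delta$ to force equality (the paper phrases this last step as a five-lemma argument, you do the equivalent diagram chase by hand).

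The one point you correctly flag as the obstacle---producing $\rho\in A_\Gamma$ with $\rho A_\Omega\rho^{-1}=A_\Delta$ and $\theta(\rho)=\theta(g)$---is exactly what the paper isolates as Lemma~\ref{lift}, and it is proved there without invoking ribbon machinery. The argument is elementary: replace $\bar g=\theta(g)$ by the minimal-length representative $g_0$ of the double coset $W_\Delta\,\bar g\,W_\Omega$ (Proposition~\ref{minimality}); then for each $v\in V(\Omega)$ the equality $\lg(g_0v)=\lg(g_0)+1$ forces $g_0vg_0^{-1}\in V(\Delta)$, so $g_0$ conjugates $V(\Omega)$ bijectively onto $V(\Delta)$ as a \emph{set}, and the Tits set-section $\iota$ transports this verbatim to $A_\Gamma$, giving $\iota(g_0)A_\Omega\iota(g_0)^{-1}=A_\Delta$; finally $\iota(\bar g)=\iota(h_1)\iota(g_0)\iota(h_2)$ with $\iota(h_1)\in A_\Delta$, $\iota(h_2)\in A_\Omega$, so $\rho:=\iota(\bar g)$ works. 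With this in hand your proof is complete.
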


The remaining of the subsection is dedicated to the proof of the above proposition.

First, we need to understand how to lift an inclusion of the form $gW_\Omega g^{-1}\subset W_\Delta$ to the corresponding Artin group (see Lemma \ref{lift}).
For that we will use the following classical result on Coxeter groups. If $W_\Gamma$ is a Coxeter group, then we denote by $\lg : W_\Gamma\to \N$ the word length with respect to $V(\Gamma)$. For more information on the length function of Coxeter groups see \cite{Humphreys}.

\begin{proposition}\cite{Bourbaki2}\label{minimality}
Let $W_\Gamma$ be a Coxeter group and $X,Y$ subsets of $V(\Gamma)$.
Let $g\in W_\Gamma$.
There exists a unique element $g_0$ in the double coset $W_X\cdot g\cdot W_Y$ of minimal length, and each element $g'\in W_X\cdot g\cdot W_Y$ can be written in the form $g'=h_1g_0h_2$ with $h_1\in W_X$, $h_2\in W_Y$ and $\lg(g') = \lg(h_1) + \lg(g_0) + \lg (h_2)$.
Moreover, $\lg(g_0h)=\lg(g_0)+\lg(h)$ for all $h\in W_Y$ and $\lg(hg_0)=\lg(h)+\lg(g_0)$ for all $h\in W_X$.
\end{proposition}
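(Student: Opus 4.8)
Since this is the classical theorem on minimal-length representatives of double cosets in the Coxeter system $(W_\Gamma,V(\Gamma))$, the plan is to derive it from the single combinatorial axiom underlying Coxeter theory, the \emph{exchange condition}, which I take as known input (see \cite{Humphreys,Bourbaki2}) rather than reprove. Two elementary consequences are recorded first: for every generator $s\in V(\Gamma)$ and every $w\in W_\Gamma$ one has $\lg(sw)=\lg(w)\pm 1$ and $\lg(ws)=\lg(w)\pm 1$; and a reduced expression of an element of $W_X$ in the letters of $X$ is already reduced in $W_\Gamma$, so that $\lg$ restricted to $W_X$ is the intrinsic length of the Coxeter system $(W_X,X)$ furnished by part (1) of the previous Proposition.

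The heart of the matter is the one-sided case, which I would establish first. For $X\subseteq V(\Gamma)$ call $w$ \emph{left-$X$-reduced} if $\lg(sw)>\lg(w)$ for all $s\in X$. The claim is that each coset $W_X w$ contains a unique element of minimal length, that it is exactly the unique left-$X$-reduced element of the coset, and that for this element $u$ one has the multiplicativity $\lg(cu)=\lg(c)+\lg(u)$ for all $c\in W_X$. Existence and the characterisation follow at once from the $\pm1$ step, while multiplicativity is proved by induction on $\lg(c)$: the exchange condition is precisely what forbids a length drop in the inductive step, since the alternative would produce an element of $W_X u$ strictly shorter than $u$, contradicting its minimality. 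The symmetric statement holds for right cosets $w W_Y$ and \emph{right-$Y$-reduced} elements.

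To pass to the double coset $W_X\,g\,W_Y$, let $g_0$ be an element of minimal length in it. The $\pm1$ step shows immediately that $g_0$ is both left-$X$-reduced and right-$Y$-reduced (``bi-reduced''), since multiplying $g_0$ by a generator of $X$ on the left or of $Y$ on the right stays in the double coset and cannot shorten it; applying the two one-sided multiplicativity statements to $g_0$ then yields directly the two ``Moreover'' assertions $\lg(hg_0)=\lg(h)+\lg(g_0)$ for $h\in W_X$ and $\lg(g_0h)=\lg(g_0)+\lg(h)$ for $h\in W_Y$. The key intermediate fact I would prove next is the converse, that every bi-reduced element of the double coset has length exactly $\lg(g_0)$: writing such an element as $a g_0 b$ with $a\in W_X$, $b\in W_Y$ and feeding its two-sided reducedness and that of $g_0$ into the one-sided multiplicativity gives the relations $\lg(g_0)+\lg(b)=\lg(a)+\lg(a g_0 b)$ and $\lg(a)+\lg(g_0)=\lg(a g_0 b)+\lg(b)$, whose sum forces $\lg(a g_0 b)=\lg(g_0)$. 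Hence a non-minimal element of the double coset is never bi-reduced, i.e.\ it admits a length-decreasing left multiplication by some $s\in X$ or right multiplication by some $t\in Y$.

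With this in hand the factorisation of an arbitrary $g'\in W_X g_0 W_Y$ through the \emph{fixed} element $g_0$ follows by induction on $\lg(g')$: when $\lg(g')>\lg(g_0)$, choose a length-decreasing move, say $\lg(sg')<\lg(g')$ with $s\in X$, apply the inductive hypothesis to write $sg'=a'g_0 b$ additively, and set $g'=(sa')g_0 b$; subadditivity of $\lg$ then forces $\lg(sa')=\lg(a')+1$ and makes this factorisation length-additive, because the alternative $\lg(sa')=\lg(a')-1$ would give the impossible $\lg(g')\le \lg(sa')+\lg(g_0)+\lg(b)<\lg(g')$. Uniqueness of the minimal element is then free: applying this additive factorisation to any second minimal element $g_0'$ gives $g_0'=a g_0 b$ with $\lg(g_0')=\lg(a)+\lg(g_0)+\lg(b)\ge\lg(g_0)=\lg(g_0')$, whence $a=b=1$ and $g_0'=g_0$. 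The step I expect to be the genuine obstacle is exactly this two-sided bookkeeping: the subtlety to respect is that a bi-reduced $g_0$ may still satisfy $sg_0=g_0t$ for some $s\in X$, $t\in Y$, so not every expression $a g_0 b$ is length-additive; the induction is arranged so as to produce only the additive ones, and the two length identities of the previous paragraph are what convert this potential ambiguity into the clean equality $\lg(a g_0 b)=\lg(g_0)$.
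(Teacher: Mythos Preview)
The paper does not prove Proposition~\ref{minimality}; it is stated with a citation to Bourbaki~\cite{Bourbaki2} and used as a black box in the proof of Lemma~\ref{lift}. So there is no paper proof to compare against, beyond noting that your route---derive the one-sided coset decomposition from the exchange condition, then bootstrap to double cosets---is exactly the classical one found in the cited references.

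Your argument is essentially sound, but there is a small circularity in the presentation that you should tidy up. The induction on $\lg(g')$ producing the additive factorisation $g'=h_1 g_0 h_2$ through the \emph{fixed} element $g_0$ has as its base case the situation $\lg(g')=\lg(g_0)$, and there you need $g'=g_0$; but that is precisely the uniqueness of the minimal element, which you only derive \emph{afterwards} from the factorisation itself. Your pleasant identity for bi-reduced elements shows that every bi-reduced element of the double coset has length $\lg(g_0)$, not that there is only one of them---and, as your own caveat about the possibility $sg_0=g_0t$ makes clear, equal length does not by itself force equality here. One way to close the loop: run the descent argument to obtain an additive factorisation $g'=h_1 u h_2$ through \emph{some} bi-reduced $u$ (where the base case is vacuous), and prove separately that the bi-reduced element is unique. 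The latter requires one more genuine application of the exchange condition than your sketch contains; it is the step where the phenomenon $sg_0=g_0t$ must be confronted head-on rather than sidestepped.
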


Let $\Gamma$ be a finite simplicial graph with edge-labeling $m\colon E(\Gamma)\to\left\{2,3,\ldots\right\}$.
We consider a set-section $\iota: W_\Gamma \to A_\Gamma$ of the natural projection $\theta: A_\Gamma\to W_\Gamma$ which is defined as follows. 
Let $g\in W_\Gamma$.
Recall that an expression $g=v_1\cdots v_l$ over $V(\Gamma)$ is called \emph{reduced} if $\lg(g)=l$.
Choose a reduced expression $g=v_1 \cdots v_l$ of $g$ and define $\iota(g)$ to be the element of $A_\Gamma$ represented by the same word $v_1 \cdots v_l$.
By Tits \cite{Tits1} this definition does not depend on the choice of the reduced expression. 
Note that $\iota$ is not a group homomorphism, but, if $g,h\in W_\Gamma$ are such that $\lg(gh)=\lg(g)+\lg(h)$, then $\iota (gh) = \iota(g)\,\iota(h)$.

\begin{lemma}\label{lift}
Let $\Gamma$ be a finite simplicial graph with edge-labeling $m\colon E(\Gamma)\to\left\{2,3,\ldots\right\}$.
Let $X,Y$ be two subsets of $V(\Gamma)$ and $g\in W_\Gamma$ such that $gW_X g^{-1} \subset W_Y$.
Then $\iota(g)A_X\iota(g)^{-1} \subset A_Y$, and, if $|X|=|Y|$, then $\iota(g)A_X\iota(g)^{-1}=A_Y$.
\end{lemma}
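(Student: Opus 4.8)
\textbf{Proof plan for Lemma \ref{lift}.}

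The plan is to exploit the set-section $\iota$ together with the double-coset normal form of Proposition \ref{minimality}. First I would choose $g_0$ to be the minimal-length element in the double coset $W_Y \cdot g \cdot W_X$. Since $gW_Xg^{-1}\subset W_Y$, left-multiplying $g$ by elements of $W_Y$ and right-multiplying by elements of $W_X$ does not change the conjugate $gW_Xg^{-1}$ (the $W_X$-part gets absorbed into the conjugation, the $W_Y$-part into $W_Y$), so we may replace $g$ by $g_0$ from the outset and assume $g=g_0$ is $(W_Y,W_X)$-reduced; it then suffices to prove the statement for this $g$, because passing back to a general representative $g'=h_1 g_0 h_2$ only conjugates $\iota(g_0)A_X\iota(g_0)^{-1}$ by $\iota(h_1)\iota(g_0 h_2 h_2^{-1}g_0^{-1})\cdots$ — more cleanly, $\iota(g')A_X\iota(g')^{-1}=\iota(h_1)\bigl(\iota(g_0)A_X\iota(g_0)^{-1}\bigr)\iota(h_1)^{-1}$ once one checks $\iota(g'x)=\iota(h_1)\iota(g_0)\iota(x')$ for the relevant length-additive factorizations, and $\iota(h_1)\in A_Y$.

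With $g=g_0$ reduced on both sides, the key point is that for every $x\in X$ we have $\lg(gx)=\lg(g)+1$ and also, writing $gxg^{-1}=y\in W_Y$, the product $gx=yg$ with $\lg(yg)=\lg(y)+\lg(g)$; hence by the length-additivity property of $\iota$ noted in the excerpt, $\iota(g)\iota(x)=\iota(gx)=\iota(yg)=\iota(y)\iota(g)$ in $A_\Gamma$, so $\iota(g)\,x\,\iota(g)^{-1}=\iota(y)\in A_Y$. Since this holds for every generator $x$ of $A_X$, we get $\iota(g)A_X\iota(g)^{-1}\subset A_Y$. The main obstacle is the length bookkeeping: one has to verify carefully that $g$ being $(W_Y,W_X)$-reduced forces $\lg(gx)=\lg(g)+1$ for each $x\in X$ (this is the ``moreover'' clause of Proposition \ref{minimality}, $\lg(g_0 h)=\lg(g_0)+\lg(h)$), and that the element $y:=gxg^{-1}$ satisfies $\lg(y)+\lg(g)=\lg(yg)=\lg(gx)=\lg(g)+1$, so $y\in V(\Gamma)$ in fact lies in $Y$ and has length $1$ — more generally one wants $\lg(gw)=\lg(g)+\lg(w)$ for all $w\in W_X$, again from Proposition \ref{minimality}, which gives $\iota(gw)=\iota(g)\iota(w)$ and lets the argument run for the whole subgroup at once rather than generator by generator.

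For the equality statement when $|X|=|Y|$: from $\iota(g)A_X\iota(g)^{-1}\subset A_Y$ we already have $gW_Xg^{-1}\subset W_Y$ with $|X|=|Y|$, and Proposition \ref{CoxeterVertices} forces $gW_Xg^{-1}=W_Y$. Now run the symmetric argument with $g^{-1}$: since $g$ is $(W_Y,W_X)$-reduced, $g^{-1}$ is $(W_X,W_Y)$-reduced, and $g^{-1}W_Yg^{-1}{}^{-1}=W_X$, so the first part applied to $g^{-1}$ gives $\iota(g^{-1})A_Y\iota(g^{-1})^{-1}\subset A_X$. Combining with $\iota(g)A_X\iota(g)^{-1}\subset A_Y$ yields $A_Y\subset \iota(g)A_X\iota(g)^{-1}\subset A_Y$, provided one knows $\iota(g^{-1})=\iota(g)^{-1}$; this holds because a reduced expression $g=v_1\cdots v_l$ gives the reduced expression $g^{-1}=v_l\cdots v_1$ (each $v_i$ is an involution in $W_\Gamma$) and $v_l\cdots v_1$ read in $A_\Gamma$ is exactly $(v_1\cdots v_l)^{-1}$ since the $v_i$ are invertible in $A_\Gamma$ — wait, the $v_i$ are not involutions in $A_\Gamma$, so one must instead argue that $\iota(g)\iota(g^{-1})$ has image $1$ in $W_\Gamma$ and, using length-additivity $\lg(g\cdot g^{-1})$ does not apply; the clean fix is to note $\iota(g^{-1})A_Y\iota(g^{-1})^{-1}\subset A_X$ combined with $\iota(g)A_X\iota(g)^{-1}\subset A_Y$ gives $A_X\subset \iota(g^{-1})A_Y\iota(g^{-1})^{-1}$ and $\iota(g)A_X\iota(g)^{-1}\subset A_Y\subset \iota(g)\iota(g^{-1})A_Y\iota(g^{-1})^{-1}\iota(g)^{-1}$, so setting $c:=\iota(g)\iota(g^{-1})\in CA_\Gamma$ one gets $A_Y\subset cA_Yc^{-1}$ with $c$ a fixed element; since standard parabolic subgroups are not conjugated into themselves properly (again by Proposition \ref{CoxeterVertices} via $\theta$, or by a direct retraction argument using $\pi_Y$), this forces $A_Y=cA_Yc^{-1}$ and hence $A_Y=\iota(g)A_X\iota(g)^{-1}$. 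I expect this last normalizer-type step to need the most care, but it is a soft argument once the length bookkeeping of the first part is in place.
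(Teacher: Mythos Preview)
Your argument for the inclusion $\iota(g)A_X\iota(g)^{-1}\subset A_Y$ is essentially the paper's proof: reduce to the $(W_Y,W_X)$-minimal element $g_0$, use the length bookkeeping from Proposition~\ref{minimality} to see that each $x\in X$ is sent by conjugation to a length-one element $f_x\in Y$, and then pass back to general $g$ via $\iota(g)=\iota(h_1)\iota(g_0)\iota(h_2)$ with $\iota(h_1)\in A_Y$, $\iota(h_2)\in A_X$.

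For the equality when $|X|=|Y|$, however, you are working much too hard and your proposed route has a real gap. You have already shown that conjugation by $\iota(g_0)$ sends the \emph{set of generators} $X$ into the \emph{set of generators} $Y$; since conjugation is injective and $|X|=|Y|<\infty$, this map $X\to Y$ is a bijection, and therefore $\iota(g_0)A_X\iota(g_0)^{-1}=A_Y$ immediately. This is exactly what the paper does, and it avoids everything involving $\iota(g^{-1})$.

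Your detour via $c=\iota(g)\iota(g^{-1})$ has two problems. First, the appeal to Proposition~\ref{CoxeterVertices} ``via $\theta$'' is vacuous: since $\theta(c)=1$, applying $\theta$ to $cA_Yc^{-1}\subset A_Y$ just gives $W_Y\subset W_Y$ and yields nothing. Second, the remaining option---a retraction argument with $\pi_Y$ to rule out a proper self-inclusion $cA_Yc^{-1}\subsetneq A_Y$---does work, but it is precisely the five-lemma step carried out in the proof of Proposition~\ref{ArtinVertices}, which in the paper is \emph{deduced from} Lemma~\ref{lift}. So your plan effectively proves the harder proposition inside the lemma, when the lemma admits the one-line finish above. (Also, the inclusion you derive should read $cA_Yc^{-1}\subset A_Y$, not $A_Y\subset cA_Yc^{-1}$.)
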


\begin{proof} By Proposition \ref{minimality},
we can write $g$ in the form $g=h_1g_0h_2$, where $h_1\in W_Y$, $h_2 \in W_X$, $g_0$ is the element of minimal length in the double coset $W_Y\cdot g\cdot W_X$, and $\lg(g) = \lg(h_1)+\lg(g_0)+lg(h_2)$.
Then $\iota(g) = \iota(h_1)\iota(g_0)\iota(h_2)$, and, since $h_1\in W_Y$ and $h_2\in W_X$, we also obtain $g_0W_X g_0^{-1} \subset W_Y$.
Let $v\in X$.
There exists $f_v\in W_Y$ such that $g_0v=f_vg_0$.
By Proposition \ref{minimality}, $\lg(g_0)+1 = \lg(g_0v) = \lg(f_vg_0)=\lg(f_v)+\lg(g_0)$, hence $\lg(f_v)=1$, that is, $f_v\in Y$.
Moreover, by the definition itself of $\iota$, $\iota(g_0) v = \iota(g_0v) = \iota(f_vg_0) = f_v \iota(g_0)$, hence $\iota(g_0)v\iota(g_0)^{-1}=f_v$.
So, $\iota(g_0) X \iota(g_0)^{-1} \subset Y$.
This implies that, on the one hand, $\iota(g_0) A_X \iota(g_0)^{-1} \subset A_Y$, and, on the other hand, if $|X|=|Y|$, then $\iota(g_0)X\iota(g_0)^{-1}=Y$ and therefore $\iota(g_0)A_X\iota(g_0)^{-1}=A_Y$.
So, since $\iota(h_1)\in A_Y$ and $\iota(h_2)\in A_X$,
\begin{align*}
\iota(g) A_X \iota(g)^{-1} &=
\iota(h_1)\iota(g_0)\iota(h_2)A_X\iota(h_2)^{-1}\iota(g_0)^{-1}\iota(h_1)^{-1}\\&=
\iota(h_1)\iota(g_0)A_X\iota(g_0)^{-1}\iota(h_1)^{-1}\\&\subset
\iota(h_1)A_Y\iota(h_1)^{-1}\\&=
A_Y\,,
\end{align*}
and we have equality if $|X|=|Y|$.
\end{proof}
With this tool we can now prove the main proposition of this section. 
\begin{proof}[Proof of Proposition \ref{ArtinVertices}]
Recall that $\theta\colon A_\Gamma \to W_\Gamma$ denotes the natural projection to the corresponding Coxeter group.
By applying $\theta$ we have $\theta(g)W_\Omega\theta(g)^{-1} \subset \theta(h)W_\Delta \theta(h)^{-1}$, hence, by Proposition \ref{CoxeterVertices}, $|V(\Omega)| \le |V(\Delta)|$.

Now, we assume that $|V(\Omega)|=|V(\Delta)|$ and we prove that $gA_\Omega g^{-1}=hA_\Delta h^{-1}$.
Without loss of generality we can assume that $h=1$, else we conjugate both sides with $h^{-1}$ and replace $g$ by $h^{-1}g$.
Set $\bar g=\theta(g)$.
We have $\bar g W_\Omega \bar g^{-1} \subset W_\Delta$ and $|V(\Omega)|=|V(\Delta)|$, hence, by Lemma \ref{lift}, $\iota(\bar g)A_\Omega \iota(\bar g)^{-1}=A_\Delta$.
Thus, we obtain $gA_{\Omega}g^{-1}\subset \iota(\bar g)A_{\Omega}\iota(\bar g)^{-1}$ and after conjugating with $\iota(\bar g)^{-1}$ we obtain $kA_{\Omega}k^{-1}\subset A_{\Omega}$, where $k=\iota(\bar g)^{-1}g$.
Note that $\theta(k)=1$, that is, $k\in CA_\Gamma$, hence we can apply the retraction map $\pi_\Omega\colon CA_\Gamma\to CA_\Omega$ to $k$.

We have the following commutative diagram:

\hspace*{5.5cm}
\begin{xy}
	\xymatrix{
	CA_\Omega \ar[r] \ar[d]    &   A_\Omega \ar[r] \ar[d] & W_\Omega \ar[d] \\
		CA_\Omega \ar[r]             &   A_\Omega \ar[r]& W_\Omega  
	}
\end{xy}

\noindent
where the vertical arrow $W_\Omega\to W_\Omega$  is the identity and the vertical arrows $CA_\Omega\to CA_\Omega$ and $A_\Omega\to A_\Omega$ are conjugations by $k$.	
Using the retraction map $\pi_\Omega\colon CA_\Gamma\to CA_\Omega$ we show that the left vertical arrow is an isomorphism. 

For $a\in CA_\Omega$ we define $a':=k\pi_\Omega(k^{-1})a\pi_\Omega(k)k^{-1}$. 
Then $a'\in k CA_\Omega k^{-1} \subset CA_\Omega$, and $a'=\pi_\Omega(a')=a$. 
Hence, the arrow is surjective. 
Note that the arrow is also injective, since the map is a conjugation. 
Since we know that the arrows $CA_\Omega\to CA_\Omega$ and $W_\Omega\to W_\Omega$ are isomorphisms, by applying the five lemma we get that the arrow $A_\Omega\to A_\Omega$ is an isomorphism as well, which means that $k A_\Omega k^{-1} = A_\Omega$.
By conjugating with $\iota(\bar g)$ we conclude  that $g A_\Omega g^{-1}=A_\Delta$.
\end{proof}

\subsection{Parabolic closure}

Let $A_\Gamma$ be an Artin group. Suppose that a subset $B\subset A_\Gamma$ is contained in a unique minimal parabolic subgroup of $A_\Gamma$ (minimal with respect to $\subset$). Then this parabolic subgroup is called the \emph{parabolic closure of B} and is denoted by $PC_\Gamma(B)$.

In the case that the intersection of two parabolic subgroups is parabolic, we show that the parabolic closure exists.

\begin{proposition}\label{PC}
Let $A_\Gamma$ be an Artin group. If $A_\Gamma$ has property (Int++), then an arbitrary intersection of parabolic subgroups is a parabolic subgroup. In particular, for a subset $B\subset A_\Gamma$ there exists $PC_\Gamma(B)$. 
\end{proposition}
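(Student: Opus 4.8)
The plan is to reduce an arbitrary intersection to a finite one and then iterate property (Int++), using Proposition \ref{ArtinVertices} to guarantee that the nested sequence of intersections stabilizes.

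First I would handle finite intersections: property (Int++) says that $P_1 \cap P_2$ is parabolic for any two parabolic subgroups, and an easy induction on $n$ then shows that $P_1 \cap \cdots \cap P_n$ is parabolic for any finite family, since $(P_1 \cap \cdots \cap P_{n-1}) \cap P_n$ is an intersection of two parabolic subgroups. So the content of the proposition is entirely in passing from finite to arbitrary families.

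Now let $\{P_i\}_{i \in I}$ be an arbitrary family of parabolic subgroups and set $Q = \bigcap_{i \in I} P_i$. For every finite subset $J \subset I$ the intersection $P_J := \bigcap_{i \in J} P_i$ is parabolic by the previous step, say $P_J = g_J A_{\Omega_J} g_J^{-1}$; note $Q \subset P_J$ for all such $J$, and the family of the $P_J$ is closed under finite intersection (refine $J$) and directed downward by inclusion. The key point is that $|V(\Omega_J)|$ is a non-negative integer bounded above by $|V(\Gamma)|$, and by Proposition \ref{ArtinVertices}, whenever $P_{J'} \subset P_J$ we have $|V(\Omega_{J'})| \le |V(\Omega_J)|$, with equality forcing $P_{J'} = P_J$. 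Hence we may choose $J_0$ finite with $|V(\Omega_{J_0})|$ minimal among all finite $J$. Then for any finite $J \supset J_0$ we have $P_J \subset P_{J_0}$, so $|V(\Omega_J)| \le |V(\Omega_{J_0})|$, and minimality gives equality, whence $P_J = P_{J_0}$ by Proposition \ref{ArtinVertices}. Since every $i \in I$ lies in some finite $J \supset J_0$ (take $J = J_0 \cup \{i\}$), we get $P_{J_0} \subset P_i$ for all $i$, so $P_{J_0} \subset Q$; combined with $Q \subset P_{J_0}$ this yields $Q = P_{J_0}$, which is parabolic.

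For the ``in particular'' statement, let $B \subset A_\Gamma$ and let $Q$ be the intersection of all parabolic subgroups containing $B$ (this family is non-empty since $A_\Gamma = A_{V(\Gamma)}$ itself is a standard parabolic subgroup containing $B$). By the first part $Q$ is parabolic, it clearly contains $B$, and it is contained in every parabolic subgroup containing $B$, so it is the unique minimal one; thus $PC_\Gamma(B) = Q$ exists. The main obstacle is the finite-to-arbitrary reduction, and the essential tool making it work is the bound on $|V(\Omega_J)|$ together with the rigidity statement in Proposition \ref{ArtinVertices}; without that second ingredient one could not conclude that the minimizing $P_{J_0}$ is actually contained in every $P_i$.
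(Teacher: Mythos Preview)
Your proof is correct and follows essentially the same approach as the paper's: both reduce the arbitrary intersection to a finite one by using Proposition~\ref{ArtinVertices} to bound and control the vertex count $|V(\Omega_J)|$ of the defining graphs of the finite partial intersections. The only cosmetic difference is that the paper phrases this as ``the descending chain $P_{i_1}\supset P_{i_1}\cap P_{i_2}\supset\cdots$ stabilizes'', whereas you pick a finite $J_0$ minimizing $|V(\Omega_{J_0})|$ directly; these are equivalent formulations of the same Noetherian argument.
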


\begin{proof}
Let $N$ be a set consisting of parabolic subgroups of $A_\Gamma$. Our goal is to show that $\bigcap N$ is parabolic. We write $N$ using indices in an index set $I$ as $N=\bigcup\limits_{i\in I} \left\{P_i\right\}$ where each $P_i$ is a parabolic subgroup of $A_\Gamma$. We construct a chain:
$$P_{i_1}\supset P_{i_1}\cap P_{i_2}\supset P_{i_1}\cap P_{i_2}\cap P_{i_3}\supset...$$
We show that this chain stabilizes: Each proper inclusion in the chain is of the form $gA_{\Delta_1}g^{-1}\supsetneq hA_{\Delta_2}h^{-1}$ since finite intersection of parabolic subgroups are assumed to be parabolic. Due to Proposition \ref{ArtinVertices} the cardinality of the vertices in the defining graphs has to strictly reduce at each step. Since $\Gamma$ is finite, this can only happen finitely many times. Thus there exists $n\in\mathbb{N}$ such that $\bigcap N=P_{i_1}\cap\ldots\cap P_{i_n}$ and by assumption this finite intersection is parabolic.

For the in particular statement, we consider the set 
$$M=\{P\subset A_\Gamma| P\text{ is a parabolic subgroup and } B\subset P\}$$
and we write $M$ using indices in an index set $J$ as $M=\bigcup\limits_{j\in J} \left\{Q_j\right\}$, where each $Q_j$ is a parabolic subgroup of $A_\Gamma$. We have
$$Q_{j_1}\supset Q_{j_1}\cap Q_{j_2}\supset Q_{j_1}\cap Q_{j_2}\cap Q_{j_3}\supset...$$
This chain stabilizes by the above proof.
We define $R=\bigcap\limits_{j\in J} Q_j$ which is a parabolic subgroup since the chain stabilizes. That $R$ is minimal is obvious from its definition. Note that $R$ is also unique, more precisely: assume that there exists another minimal parabolic subgroup $R'$ with $B\subset R'$. By assumption $R\cap R'$ is a parabolic subgroup containing $B$. Since $R$ and $R'$ are minimal, we have $R\cap R'=R$ and $R\cap R'=R'$, hence $R=R'$.
\end{proof}

In particular, the parabolic closure of a subset $B\subset A_\Gamma$ is the parabolic subgroup $a A_{\Gamma'} a^{-1}$ containing $B$ for $|V(\Gamma')|$ minimal.

The proofs of the following lemmata are of the same flavor as the proof of the above proposition so we omit the details.

\begin{lemma} \label{PCcomplete}
Let $A_\Gamma$ be an Artin group and $B$ be a subset of a complete parabolic subgroup. If $A_\Gamma$ has (Int+--), then there exists $PC_\Gamma(B)$. 
\end{lemma}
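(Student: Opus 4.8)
The plan is to follow the proof of Proposition~\ref{PC} almost verbatim, using property (Int+--) in place of (Int++) and anchoring the descending chain at a complete parabolic subgroup containing $B$. Since (Int+--), unlike (Int++), only allows us to intersect a \emph{complete} parabolic subgroup with an arbitrary parabolic subgroup, in order to iterate intersections we need the following heredity statement, which I denote $(\ast)$: a parabolic subgroup of $A_\Gamma$ contained in a complete parabolic subgroup of $A_\Gamma$ is itself a complete parabolic subgroup. (This is unconditional; it uses nothing about (Int) or (Int+--).)

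Granting $(\ast)$, the argument parallels Proposition~\ref{PC}. By hypothesis there is a complete parabolic subgroup $P_0 = gA_Yg^{-1}$ with $B\subseteq P_0$; set $M = \{Q\subseteq A_\Gamma \mid Q \text{ parabolic},\ B\subseteq Q\}$, so that $P_0\in M$. An induction using (Int+--) and $(\ast)$ shows that for any $Q_1,\dots,Q_n\in M$ the intersection $P_0\cap Q_1\cap\dots\cap Q_n$ is again a complete parabolic subgroup containing $B$: in the inductive step, if $R:=P_0\cap Q_1\cap\dots\cap Q_{n-1}$ is complete parabolic, then $R\cap Q_n$ is parabolic by (Int+--) and, being contained in the complete parabolic subgroup $R$, is complete parabolic by $(\ast)$. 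Now run the descending chain $P_0\supseteq P_0\cap Q_1\supseteq P_0\cap Q_1\cap Q_2\supseteq\cdots$ over the members of $M$, exactly as in the proof of Proposition~\ref{PC}: every term is a parabolic subgroup, so by Proposition~\ref{ArtinVertices} the number of vertices of the defining graph strictly decreases at each proper inclusion, and since it is bounded by $|V(\Gamma)|$ the chain stabilizes at some $R_0$. As in Proposition~\ref{PC}, stabilization (together with the equality clause of Proposition~\ref{ArtinVertices}) gives $R_0 = \bigcap_{Q\in M}Q$; hence $R_0$ is a parabolic subgroup which contains $B$ and is contained in every parabolic subgroup containing $B$, so $PC_\Gamma(B) = R_0$ exists.

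The step I expect to cost the most is $(\ast)$, which I would prove inside the Coxeter quotient $W_\Gamma$. Write $P = gA_\Omega g^{-1}\subseteq hA_Yh^{-1}$ with $\langle Y\rangle$ complete; after conjugating we may assume $h=1$. Suppose $\langle\Omega\rangle$ is not complete and pick distinct $v,w\in\Omega$ with $\{v,w\}\notin E(\Gamma)$; then $W_{\{v,w\}}$ is a standard parabolic subgroup of $W_\Omega$ and is canonically isomorphic to $W_{\langle\{v,w\}\rangle}$, the infinite dihedral group $D_\infty$. Applying the natural projection $\theta$ to $gA_\Omega g^{-1}\subseteq A_Y$ gives $\bar g\,W_{\{v,w\}}\,\bar g^{-1}\subseteq W_Y$, where $\bar g=\theta(g)$. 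Decompose $\bar g = h_1g_0h_2$ by Proposition~\ref{minimality} with respect to the double coset $W_Y\,\bar g\,W_{\{v,w\}}$, so that $g_0$ has minimal length, $h_1\in W_Y$ and $h_2\in W_{\{v,w\}}$; then $g_0W_{\{v,w\}}g_0^{-1}\subseteq W_Y$, and the length computation from the proof of Lemma~\ref{lift} shows that $f_u := g_0ug_0^{-1}$ has length $1$ and lies in $Y$ for each $u\in\{v,w\}$. Since conjugation is injective, $Z := \{f_v,f_w\}$ is a two-element subset of $Y$ and $\bar g\,W_{\{v,w\}}\,\bar g^{-1} = h_1W_Zh_1^{-1}$, whence $W_Z\cong W_{\{v,w\}}\cong D_\infty$. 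But $\langle Y\rangle$ is complete, so $\{f_v,f_w\}\in E(\Gamma)$, and therefore $W_Z\cong W_{\langle Z\rangle}$ is the \emph{finite} dihedral group of order $2\,m(\{f_v,f_w\})$ --- a contradiction. Thus $\langle\Omega\rangle$ is complete, which proves $(\ast)$. So the only work beyond transcribing the proof of Proposition~\ref{PC} is this Coxeter-theoretic observation: $D_\infty$ cannot occur as a rank-two parabolic subgroup of a Coxeter group whose defining graph is complete with all edge-labels finite.
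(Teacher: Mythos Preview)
Your proof is correct, and since the paper omits the argument entirely (merely asserting it is ``of the same flavor'' as Proposition~\ref{PC}), your write-up in fact supplies more than the paper does. The key point you isolate --- the heredity statement $(\ast)$ that a parabolic subgroup contained in a complete parabolic subgroup is itself complete --- is genuinely needed: property (Int+--) only lets you intersect a \emph{complete} parabolic with an arbitrary one, so without $(\ast)$ there is no way to iterate. The paper invokes $(\ast)$ implicitly elsewhere as well (in the proof of Theorem~\ref{Int}, where ``It is complete because it is contained in $P_1$'' is asserted without justification), so your Coxeter-theoretic proof via the minimal double-coset representative and the $D_\infty$ obstruction is a useful addition. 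Once $(\ast)$ is in hand, your descending-chain argument is exactly the transcription of Proposition~\ref{PC} that the paper intends.
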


\begin{lemma}\label{subsetPC}
Let $A_\Gamma$ be an Artin group and $B_1$ and $B_2$ be subsets of $A_\Gamma$. Assume that $PC_\Gamma(B_1)$ and $PC_\Gamma(B_2)$ exist. If $B_1\subset B_2$, then $PC_\Gamma(B_1)\subset PC_\Gamma(B_2)$.
\end{lemma}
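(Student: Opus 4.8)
The plan is to first upgrade the defining property of a parabolic closure: I claim that whenever $PC_\Gamma(B)$ exists, it is not merely \emph{a} minimal parabolic subgroup containing $B$ but the \emph{smallest} one, i.e.\ $PC_\Gamma(B)$ is contained in every parabolic subgroup of $A_\Gamma$ that contains $B$ (equivalently, $PC_\Gamma(B)=\bigcap\{Q\mid Q \text{ parabolic},\ B\subset Q\}$). Granting this, the lemma is immediate: since $B_1\subset B_2\subset PC_\Gamma(B_2)$, the subgroup $PC_\Gamma(B_2)$ is a parabolic subgroup of $A_\Gamma$ containing $B_1$, and hence by the claim $PC_\Gamma(B_1)\subset PC_\Gamma(B_2)$.

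To prove the claim, let $Q$ be any parabolic subgroup with $B\subset Q$. I would consider a descending chain of parabolic subgroups starting at $Q$, all containing $B$:
$$Q=Q_0\supsetneq Q_1\supsetneq Q_2\supsetneq\cdots,$$
each $Q_i$ parabolic and containing $B$. By Proposition \ref{ArtinVertices}, each strict inclusion $Q_{i}\supsetneq Q_{i+1}$ forces the number of vertices of the defining graph of $Q_{i+1}$ to be strictly smaller than that of $Q_i$; since $\Gamma$ is finite, such a chain has length at most $|V(\Gamma)|+1$. Therefore every parabolic subgroup $Q$ containing $B$ admits a parabolic subgroup $Q'$ with $B\subset Q'\subset Q$ that is minimal (with respect to $\subset$) among parabolic subgroups containing $B$. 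By hypothesis $PC_\Gamma(B)$ is the \emph{unique} such minimal parabolic subgroup, so $Q'=PC_\Gamma(B)$ and consequently $PC_\Gamma(B)\subset Q$. As $Q$ was arbitrary, this proves the claim.

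This is exactly the chain-stabilization argument used in the proof of Proposition \ref{PC}, which is why the details are ``of the same flavor'' and were omitted. There is no real obstacle here; the only point requiring care is the passage from ``unique minimal'' to ``minimum'', and that is precisely where Proposition \ref{ArtinVertices} (together with the finiteness of $\Gamma$) is invoked, just as in the proof of Proposition \ref{PC}. I would also remark that, unlike Proposition \ref{PC}, no intersection hypothesis such as (Int++) or (Int+\,--) is needed for this step: we are \emph{assuming} that $PC_\Gamma(B_1)$ and $PC_\Gamma(B_2)$ exist, and only need to compare them.
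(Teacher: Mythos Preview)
Your proposal is correct and follows exactly the route the paper intends: the paper omits the proof, noting it is ``of the same flavor'' as the proof of Proposition~\ref{PC}, and your descending-chain argument via Proposition~\ref{ArtinVertices} to upgrade ``unique minimal'' to ``minimum'' is precisely that flavor. Your observation that no intersection hypothesis is needed here (only the assumed existence of $PC_\Gamma(B_1)$ and $PC_\Gamma(B_2)$) is also on point.
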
 

Remember that we defined two classes of Artin groups in the introduction as follows:

\begin{enumerate}
\item 
An Artin group $A_\Gamma$ is contained  in the class $\mathcal{A}$ iff $A_\Gamma$ has property (Int++).
\item 
An Artin group $A_\Gamma$ is contained  in the class $\mathcal{B}$ iff $A_\Gamma$ has property (Int).
\end{enumerate}
Due to previous work of multiple authors, we know that many classes of Artin groups fall into $\mathcal{A}$ or $\mathcal{B}$, for example:

\begin{enumerate}
\item
Artin groups of spherical type, right-angled Artin groups and large type Artin groups are in the class $\mathcal{A}$ \cite{CGG,DuncanKazachkovRemeslennikov,LargeType}.
\item
Artin groups of FC type are in the class $\mathcal{B}$ \cite{MorrisWright}.
\end{enumerate}

In the next section we show that an Artin group in the class $\mathcal{B}$ already has property (Int+--). This implies:

\begin{corollary}\
\begin{enumerate}
\item 
Let $A_\Gamma$ be in the class $\mathcal{A}$. Then for any subset $B\subset A_\Gamma$ there exists $PC_\Gamma(B)$.
\item 
Let $A_\Gamma$ be in the class $\mathcal{B}$. For a subset $B\subset A_\Gamma$ of a complete parabolic subgroup there exists $PC_\Gamma(B)$.
\end{enumerate}
\end{corollary}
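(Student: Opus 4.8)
The statement to prove is the final Corollary: (1) if $A_\Gamma$ is in class $\mathcal{A}$ then $PC_\Gamma(B)$ exists for every subset $B\subset A_\Gamma$, and (2) if $A_\Gamma$ is in class $\mathcal{B}$ then $PC_\Gamma(B)$ exists for every subset $B$ of a complete parabolic subgroup. Part (1) is immediate: by definition class $\mathcal{A}$ means $A_\Gamma$ has property (Int++), and then Proposition~\ref{PC} directly gives that an arbitrary intersection of parabolic subgroups is parabolic and that $PC_\Gamma(B)$ exists for every $B\subset A_\Gamma$. So there is nothing new to do for (1); one simply cites Proposition~\ref{PC}.

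For part (2), the plan is to combine Theorem~\ref{Int} with Lemma~\ref{PCcomplete}. By definition, $A_\Gamma$ in class $\mathcal{B}$ means $A_\Gamma$ has property (Int). By Theorem~\ref{Int}, property (Int) implies property (Int+--). Now let $B$ be a subset of a complete parabolic subgroup of $A_\Gamma$. Since $A_\Gamma$ has (Int+--), Lemma~\ref{PCcomplete} applies verbatim and yields that $PC_\Gamma(B)$ exists. This is the whole argument: it is a two-line deduction chaining the named results in the order (Int) $\Rightarrow$ (Int+--) $\Rightarrow$ existence of the parabolic closure of subsets of complete parabolic subgroups.

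The only genuine mathematical content sits in the results being invoked, not in the corollary itself. In particular, the substance of part (2) is really Lemma~\ref{PCcomplete}, whose proof mirrors that of Proposition~\ref{PC}: one takes the family $M$ of parabolic subgroups containing $B$, forms a descending chain of finite intersections, and observes that because $B$ lies in a complete parabolic subgroup, at every stage one is intersecting a parabolic subgroup with a \emph{complete} parabolic subgroup, so (Int+--) keeps the intersections parabolic; the chain then stabilizes because $|V(\cdot)|$ strictly drops at each proper inclusion by Proposition~\ref{ArtinVertices}, giving a minimal element, whose uniqueness follows exactly as in Proposition~\ref{PC}. Thus the corollary as stated requires no new obstacle — the expected ``hard part'' has already been discharged inside Theorem~\ref{Int} and Lemma~\ref{PCcomplete}, and the corollary is purely a packaging of those facts against the definitions of the classes $\mathcal{A}$ and $\mathcal{B}$.
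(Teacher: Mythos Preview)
Your proposal is correct and matches the paper's approach exactly: the corollary is placed immediately after the sentence ``In the next section we show that an Artin group in the class $\mathcal{B}$ already has property (Int+--),'' and the paper treats it as an immediate consequence of Proposition~\ref{PC} for part~(1) and of Theorem~\ref{Int} together with Lemma~\ref{PCcomplete} for part~(2), just as you do. Your added unpacking of Lemma~\ref{PCcomplete} (start the chain at a complete parabolic containing $B$, use (Int+--) and the fact that parabolics inside a complete parabolic are complete, then invoke Proposition~\ref{ArtinVertices} to stabilize) is precisely the ``same flavor'' argument the paper alludes to but omits.
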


\section{Bass-Serre theory meets Artin groups}

Let $\psi\colon G\to{\rm Isom}(T)$ be a group action via simplicial isometries on a tree $T$ without inversion. One can consider a tree as a metric space by assigning each edge the length one. If we write `$x\in T$' we implicitly assume that we metrized the tree in that way. For a subset $A\subset G$ we denote by ${\rm Fix}(\psi(A)):=\left\{x\in T\mid \psi(a)(x)=x \text{ for all }a\in A\right\}$ the fixed point set of $\psi(A)$. Note that if ${\rm Fix}(\psi(A))$ is non-empty, then it is a subtree of $T$. Further, for an edge $e\in E(T)$ the stabilizer of $e$, denoted by ${\rm stab}(e)$ is defined as ${\rm stab}(e):=\left\{g\in G\mid \psi(g)(e)=e\right\}$ and resp. for a vertex $v\in V(T)$ the stabilizer ${\rm stab}(v):=\left\{g\in G\mid \psi(g)(v)=v\right\}$. 

Given an amalgam $A*_C B$, by the Bass-Serre theory, there is a simplicial tree
$T_{A*_C B}$ on which $A*_C B$ acts simplicially without a global fixed point. The Bass-Serre
tree $T_{A*_C B}$ is constructed as follows: the vertices of $T_{A*_C B}$ are cosets of $A$ and $B$ and
two vertices $gA$ and $hB$, $g,h\in A*_C B$ are connected by an edge iff $gA\cap hB=gC$.  For more information about amalgamated products and Bass-Serre theory see \cite{Serre}. For the very small example of $G=\mathbb{Z}/4\mathbb{Z}\ast\mathbb{Z}/6\mathbb{Z}$ (a portion of) the Bass-Serre tree looks like this (suppose $G=\langle a,b\rangle, o(a)=4, o(b)=6$):

\begin{center}
\begin{tikzpicture} \coordinate[label={[xshift=0.3cm]:${1\langle a\rangle}$}] (A) at (-2,0);
\coordinate[label={[xshift=-0.3cm]:${1\langle b\rangle}$}] (B) at (2,0);

\coordinate[label=left :${b\langle a\rangle}$] (C) at (3,2);
\coordinate[label={[xshift=-0.3cm,yshift=-0.1cm]:${b^2\langle a\rangle}$}] (D) at (3.5,1);
\coordinate[label=above :${b^3\langle a\rangle}$] (E) at (4,0);
\coordinate[label={[xshift=0.1cm]:${b^4\langle a\rangle}$}] (F) at (3.5,-1);
\coordinate[label=left :${b^5\langle a\rangle}$] (G) at (3,-2);

\coordinate[label=right:${a\langle b\rangle}$] (H) at (-3,1);
\coordinate[label={[xshift=0.2cm]:${a^2\langle b\rangle}$}] (I) at (-4,0);
\coordinate[label=right:${a^3\langle b\rangle}$] (J) at (-3,-1);

\fill (A) circle (2pt);
\fill (B) circle (2pt);
\fill (C) circle (2pt);
\fill (D) circle (2pt);
\fill (E) circle (2pt);
\fill (F) circle (2pt);
\fill (G) circle (2pt);
\fill (H) circle (2pt);
\fill (I) circle (2pt);
\fill (J) circle (2pt);

\draw (A) -- (B);
\draw (B) -- (C);
\draw (B) -- (D);
\draw (B) -- (E);
\draw (B) -- (F);
\draw (B) -- (G);
\draw (A) -- (H);
\draw (A) -- (I);
\draw (A) -- (J);

\draw[dotted] (J) -- (-2.5,-2);
\draw[dotted] (J) -- (-2.25,-1.7);
\draw[dotted] (J) -- (-3,-2.25);
\draw[dotted] (J) -- (-3.5,-2);
\draw[dotted] (J) -- (-3.75,-1.7);

\draw[dotted] (H) -- (-2.5,2);
\draw[dotted] (H) -- (-2.25,1.7);
\draw[dotted] (H) -- (-3,2.25);
\draw[dotted] (H) -- (-3.5,2);
\draw[dotted] (H) -- (-3.75,1.7);

\draw[dotted] (I) -- (-4.5,0.5);
\draw[dotted] (I) -- (-4.75,0.3);
\draw[dotted] (I) -- (-5,0);
\draw[dotted] (I) -- (-4.75,-0.3);
\draw[dotted] (I) -- (-4.5,-0.5);

\draw[dotted] (C) -- (2.5,2.8);
\draw[dotted] (C) -- (3,3);
\draw[dotted] (C) -- (3.5,2.8);

\draw[dotted] (D) -- (4.5,1.5);
\draw[dotted] (D) -- (4.6,1);
\draw[dotted] (D) -- (4.2,2);

\draw[dotted] (E) -- (5,0.5);
\draw[dotted] (E) -- (5.1,0);
\draw[dotted] (E) -- (5,-0.5);

\draw[dotted] (F) -- (4.1,-1.9);
\draw[dotted] (F) -- (4.6,-1);
\draw[dotted] (F) -- (4.5,-1.5);

\draw[dotted] (G) -- (2.5,-2.8);
\draw[dotted] (G) -- (3,-3);
\draw[dotted] (G) -- (3.5,-2.8);
\end{tikzpicture}
\end{center}
How the Bass-Serre tree changes with amalgamation can be seen in \cite[p. 35]{Serre}, where the Bass-Serre tree is drawn for $\mathbb{Z}/4\mathbb{Z}\ast_{\mathbb{Z}/2\mathbb{Z}}\mathbb{Z}/6\mathbb{Z}$.

The group $G=A\ast_C B$ acts on its Bass-Serre tree via left-multiplication. For this action the stabilizer of a vertex $gA$ is given by $gAg^{-1}$ and the stabilizer of an edge between $gA$ and $hB$ is given by $gCg^{-1}$ (since $gA\cap hB=gC$).

\subsection{Intersection of parabolic subgroups}

In this section we prove Theorem \ref{Int} from the introduction. Our proof is of geometric nature with main tool being Bass-Serre theory.  We will be using a slightly different notation for the vertices and edges of the Bass-Serre tree associated to $A_\Gamma=A_X\ast_{A_Z}A_Y$.

\begin{notation} 
Given an Artin group $A_\Gamma=A_X\ast_{A_Z}A_Y$, we write vertices in the Bass-Serre tree $T$ as $v(a,X):=aA_X$ and $v(b,Y):=bA_Y$. For the edges we simply associate an edge $e(a)$ to each $a\in A_\Gamma$ with endpoints $v(a,X)$ and $v(a,Y)$. We have $e(a)=e(b)$ if and only if $aA_Z=bA_Z$ and every edge of $T$ has this form.
\end{notation}

\begin{proof}[Proof of Theorem \ref{Int}]
We assume that $A_\Gamma$ has Property (Int).
We take a complete parabolic subgroup $P_1$ and a parabolic subgroup $P_2$ of $A_\Gamma$, and we prove that $P_1\cap P_2$ is a parabolic subgroup by induction on the number of pairs $\{s,t\} \subset V(\Gamma)$ satisfying $\left\{s,t\right\}\notin E(\Gamma)$.

If there is no such a pair, then $\Gamma$ itself is complete and then property (Int) suffices for saying that $P_1 \cap P_2$ is a parabolic subgroup. 
So, we can assume that there exists a pair $\{s,t\} \subset V(\Gamma)$ such that $\{s,t\}\notin E(\Gamma)$ and that the inductive hypothesis holds. 

We set $I=V(\Gamma)\setminus \{s\}$, $J=V(\Gamma)\setminus\{t\}$, and $K=V(\Gamma)\setminus\{s,t\}$. By Lemma \ref{decomposition} we have the amalgamated product $A_\Gamma\cong A_I*_{A_K} A_J$, which leads to the construction of the Bass-Serre tree $T$ associated to this splitting.

There exist subsets $X,Y\subset V(\Gamma)$ and elements $g,h\in A_\Gamma$ such that $X$ is free of infinity, $P_1 = g A_X g^{-1}$, and $P_2 = h A_Y h^{-1}$.
Since $X$ is free of infinity, we have either $X\subset I$ or  $X\subset J$, hence there exists a vertex $u_0=v(a_0,U_0)$ of $T$ such that
\[
P_1=g A_X g^{-1} \subset {\rm stab}(u_0)=a_0 A_{U_0} a_0^{-1}\,.
\]
From here the proof is divided into two cases.

\bigskip\noindent
{\it Case 1 :}
 $\{s,t\}\not\subset Y$.
Then either $Y\subset I$ or $Y\subset J$, hence there exists a vertex $v_0=v(b_0,V_0)$ of $T$ such that
\[
P_2=h A_Y h^{-1} \subset {\rm stab}(v_0)=b_0 A_{V_0} b_0^{-1}\,.
\]
We denote by $d$ the distance in $T$ between $u_0$ and $v_0$ and we argue by induction on $d$.
Suppose $d=0$, that is, $u_0=v_0$.
Then $a_0^{-1} P_1 a_0$ is a complete parabolic subgroup of $A_{U_0}$ and $a_0^{-1} P_2 a_0$ is a parabolic subgroup of $A_{U_0}$.
The number of pairs $\{i,j\}$ satisfying $\{i,j\} \not\in E(\Gamma)$ is strictly smaller in $U_0$ than in $V(\Gamma)$, hence, by the induction hypothesis, $(a_0^{-1} P_1 a_0) \cap (a_0^{-1} P_2 a_0) = a_0^{-1} (P_1 \cap P_2) a_0$ is a parabolic subgroup of $A_{U_0}$, and therefore a parabolic subgroup of $A_\Gamma$.
So, $P_1 \cap P_2$ is a parabolic subgroup of $A_\Gamma$.

Now we assume that $d\ge 1$ and that the inductive hypothesis on $d$ holds. 
Let $(u_0,u_1,\dots,u_d)$ be the unique geodesic in $T$ connecting $u_0$ with $v_0=u_d$.
For $i\in\{1,\dots,d\}$ we denote by $e_i$ the edge connecting $u_{i-1}$ with $u_i$.
Note that, since $P_1 \subset {\rm stab}(u_0)$ and $P_2\subset {\rm stab}(v_0)$, we have $P_1\cap P_2 \subset {\rm stab}(u_i)$ for all $i\in\{0,1,\dots, d\}$ and $P_1\cap P_2\subset {\rm stab}(e_i)$ for all $i\in\{1,\dots, d\}$.
We set $u_1=v(a_1,U_1)$ and $e_1=e(c_1)$.

\begin{center}
\begin{tikzpicture}
\coordinate[label=below:${u_0=v(a_0,U_0)}$] (A) at (-5,0);
\coordinate[label=above:${u_1=v(a_1,U_1)}$] (B) at (-3.8,1);
\coordinate (C) at (-2.6,-1);
\coordinate (D) at (-1.4,1);
\coordinate (E) at (-0.2,-1);
\coordinate (F) at (0,1);
\coordinate[label=right:${v_0=u_d}$] (G) at (1,0);

\draw (A) -- (B) node[midway,above,sloped]{$e(c_1)$};
\draw (B) -- (C);
\draw (C) -- (D);
\draw (D) -- (E);
\draw (E) -- (G);

\draw[dotted] (-6,1.5) -- (A);
\draw[dotted] (-6,-1.5) -- (A);

\draw[dotted] (2,1.5) -- (G);
\draw[dotted] (2,-1.5) -- (G);

\fill (A) circle (2pt);
\fill (B) circle (2pt);
\fill (C) circle (2pt);
\fill (D) circle (2pt);
\fill (E) circle (2pt);

\fill (G) circle (2pt);

\end{tikzpicture}
\end{center}

The group $a_0^{-1}g A_X g^{-1} a_0$ is a complete parabolic subgroup of $A_{U_0}$ and $a_0^{-1} c_1 A_K c_1^{-1} a_0$ is a parabolic subgroup of $A_{U_0}$, hence, by the induction hypothesis (on the number of pairs $\{i,j\}$ such that $\{i,j\}$ is not an edge), $(a_0^{-1}g A_X g^{-1} a_0) \cap (a_0^{-1} c_1 A_K c_1^{-1} a_0) = a_0^{-1} ( (g A_X g^{-1}) \cap (c_1 A_K c_1^{-1}) )a_0$ is a parabolic subgroup, and therefore $(g A_X g^{-1}) \cap (c_1 A_K c_1^{-1})$ is a parabolic subgroup. 
It is complete because it is contained in $P_1= g A_X g^{-1}$.
So, there exist $g_1\in A_\Gamma$ and $X_1\subset V(\Gamma)$ such that $X_1$ is free of infinity and $(g A_X g^{-1}) \cap (c_1 A_K c_1^{-1}) = g_1 A_{X_1}g_1^{-1}$.
Moreover, 
\[
g_1 A_{X_1} g_1^{-1} \subset c_1 A_K c_1^{-1} \subset a_1 A_{U_1} a_1^{-1} = {\rm stab}(u_1)\,.
\]
By the induction hypothesis (on $d$), it follows that 
\begin{gather*}
P_1 \cap P_2 =
(g A_X g^{-1}) \cap (h A_Y h^{-1}) =
(g A_X g^{-1}) \cap (c_1 A_K c_1^{-1}) \cap (h A_Y h^{-1}) =\\
(g_1 A_{X_1} g_1^{-1}) \cap (h A_Y h^{-1})
\end{gather*}
is a parabolic subgroup of $A_\Gamma$.
This finishes the proof of Case 1. 

\bigskip\noindent
{\it Case 2:} 
$\{s,t\} \subset Y$.
We set $Y_I=Y\setminus \{s\}=Y\cap I$, $Y_J=Y\setminus \{t\} = Y\cap J$ and $Y_K=Y\setminus \{s,t\} = Y\cap K$.
As for $A_\Gamma$ we have the amalgamated product $A_Y=A_{Y_I} *_{A_{Y_K}} A_{Y_J}$.
We denote by $T_Y$ the Bass-Serre tree associated to this amalgamated product. 

\bigskip\noindent
{\it Claim.}
{\it We have an embedding of $T_Y$ into $T$ which, for each $a\in A_Y$, sends $v(a,Y_I)$ to $v(a,I)$, $v(a,Y_J)$ to $v(a,J)$, and $e(a)$ to $e(a)$.}

\bigskip\noindent
{\it Proof of the Claim.}
Let $a,b \in A_Y$.
We need to show the following equivalencies: 
\begin{gather*}
aA_{Y_I}=bA_{Y_I}\ \Leftrightarrow\ aA_I = bA_I\,,\quad
aA_{Y_J}=bA_{Y_J}\ \Leftrightarrow\ aA_J = bA_J\,,\\
aA_{Y_K}=bA_{Y_K}\ \Leftrightarrow\ aA_K = bA_K\,.
\end{gather*}
We prove the first one. 
The others can be proved in the same way. 
Recall that, by Van der Lek \cite{Vanderlek}, $A_{Y_I}=A_{Y\cap I} = A_Y \cap A_I$.
So, since $a^{-1}b\in A_Y$,
\[
aA_{Y_I}=bA_{Y_I}\ \Leftrightarrow\
a^{-1}b \in A_{Y_I}=A_Y \cap A_I\ \Leftrightarrow\
a^{-1}b \in A_I\ \Leftrightarrow\
aA_I = bA_I\,.
\]
This completes the proof of the Claim. 

\bigskip\noindent
Let $v_0$ be the unique vertex of $h T_Y$ at minimal distance from $u_0$.
Note that $v_0$ may be equal to $u_0$.

\begin{center}
\begin{tikzpicture}
\coordinate (A) at (2,0);
\coordinate (B) at (0,0);
\coordinate (C) at (3,1);
\coordinate (D) at (3,-1);
\coordinate (E) at (2.5,1.5);
\coordinate (F) at (3.5,1.5);
\coordinate (G) at (2.5,-1.5);
\coordinate (H) at (3.5,-1.5);
\coordinate (I) at (-1,1);

\coordinate (J) at (-1,-1);

\coordinate (K) at (-0.5,1.5);
\coordinate (L) at (-1.5,1.5);
\coordinate (M) at (-0.5,-1.5);
\coordinate (N) at (-1.5,-1.5);

\fill (B) circle (2pt);

\fill (D) circle (2pt);

\fill (G) circle (2pt);
\fill (H) circle (2pt);
\fill (I) circle (2pt);

\fill (K) circle (2pt);
\fill (L) circle (2pt);
\fill (M) circle (2pt);
\fill (N) circle (2pt);

\draw (A) -- (B);
\draw[color=red] (A) -- (C);
\draw (A) -- (D);
\draw[color=red] (C) -- (E);
\draw[color=red] (C) -- (F);
\draw (D) -- (G);
\draw (D) -- (H);

\draw (B) -- (I);
\draw (B) -- (J);
\draw (I) -- (K);
\draw (I) -- (L);
\draw (J) -- (M);
\draw (J) -- (N);

\fill[color=blue] (J) circle (2pt);
\draw[color=blue] (J) -- (J) node[midway, right]{$u_0$};

\fill[color=red] (A) circle (2pt);
\fill[color=red] (C) circle (2pt);
\fill[color=red] (E) circle (2pt);
\fill[color=red] (F) circle (2pt);

\draw[color=red] (3.2,0.1) -- (3.2,0.1) node[midway,above]{$h T_Y$};
\draw[color=red] (A) -- (A) node[midway, below]{$v_0$};
\end{tikzpicture}
\end{center}

Since $P_1$ stabilizes $u_0$ and $P_2$ stabilizes $h T_Y$ (as a set, not pointwise), $P_1 \cap P_2$ stabilizes $v_0$, that is, $P_1\cap P_2 \subset {\rm stab}(v_0)$ due to the minimality of the distance between $u_0$ and $v_0$.
Let $a\in A_Y$ and $V\in \{I,J\}$ such that $v_0=v(h a,V)$.
We have
\begin{gather*}
{\rm stab}(v_0) \cap P_2 =
(h a A_V a^{-1} h^{-1}) \cap (h A_Y h^{-1}) =
(h a A_V a^{-1} h^{-1}) \cap (h a A_Y a^{-1} h^{-1}) =\\
h a (A_V \cap A_Y) a^{-1} h^{-1} =
h a A_{V\cap Y} a^{-1} h^{-1}\,,
\end{gather*}
hence
\[
P_1 \cap P_2 =
P_1 \cap {\rm stab}(v_0) \cap P_2 =
P_1 \cap (h a A_{V\cap Y} a^{-1} h^{-1})\,.
\]
Since $\{s,t\}\not\subset V\cap Y$, we conclude by Case 1 that $P_1 \cap P_2$ is a parabolic subgroup.
\end{proof}

\subsection{Serre's property FA and a generalization of this fixed point property}

We use Bass-Serre theory in this section to prove Proposition \ref{FA} and Theorem \ref{MainTheorem1}. Let us first recall the definitions of properties ${\rm F}\mathcal{A}$ and ${\rm F}\mathcal{A}'$ and the basic implications we need later on.

A group $G$ is said to have property ${\rm F}\mathcal{A}$ if every simplicial action of $G$ on any tree without inversions has a global fixed vertex. A weaker property is property ${\rm F}\mathcal{A'}$, here we require that for every action of $G$ on any tree without inversions every element has a fixed point.

\begin{proposition}
Artin groups do not have property ${\rm F}\mathcal{A}$.
\end{proposition}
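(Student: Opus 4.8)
The statement to prove is that Artin groups do not have property ${\rm F}\mathcal{A}$. Since property ${\rm F}\mathcal{A}$ implies that every action on a tree has a global fixed vertex, it suffices to exhibit, for each Artin group $A_\Gamma$, a simplicial action on some tree (without inversions) that has no global fixed point. The natural candidate is an amalgamated product decomposition: by Lemma \ref{decomposition}, if $\Gamma$ is not complete, then picking two non-adjacent vertices $v,w$ gives $A_\Gamma\cong A_{V-\{v\}}*_{A_{V-\{v,w\}}}A_{V-\{w\}}$, a nontrivial amalgam, and the action on the corresponding Bass-Serre tree has no global fixed point. So the plan is: first dispose of the non-complete case via this decomposition, then handle the complete case separately.

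The main obstacle is therefore the case where $\Gamma$ is complete, since then $A_\Gamma$ admits no such visible amalgam decomposition. Here I would use the abelianization. The abelianization of $A_\Gamma$ is a free abelian group of rank $|V(\Gamma)|\geq 1$ (the relations $\underbrace{vwv\cdots}_{}=\underbrace{wvw\cdots}_{}$ only identify generators when $m(\{v,w\})$ is odd, so the rank may drop, but it is always at least $1$ because each connected component contributes at least one $\mathbb{Z}$ factor; in any case it is a nontrivial torsion-free abelian group). Now $\mathbb{Z}$ does not have property ${\rm F}\mathcal{A}$ — it acts on the real line (subdivided into unit edges) by translation with no fixed point — and property ${\rm F}\mathcal{A}$ passes to quotients. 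Hence $A_\Gamma$ surjecting onto a group without ${\rm F}\mathcal{A}$ cannot itself have ${\rm F}\mathcal{A}$. Concretely: compose the abelianization $A_\Gamma\to\mathbb{Z}^r\to\mathbb{Z}$ with the translation action of $\mathbb{Z}$ on the line; this is a simplicial action of $A_\Gamma$ on a tree without inversions and without global fixed point.

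In fact this second argument alone handles \emph{all} Artin groups uniformly, making the amalgam discussion unnecessary for this particular proposition — every $A_\Gamma$ surjects onto $\mathbb{Z}$ (send every generator to $1$; all braid-type relations are preserved since both sides of $\underbrace{vwv\cdots}_{k}=\underbrace{wvw\cdots}_{k}$ map to $k$), and $\mathbb{Z}$ visibly lacks ${\rm F}\mathcal{A}$. So the cleanest proof is: the map $\phi\colon A_\Gamma\to\mathbb{Z}$ sending each standard generator to $1$ is a well-defined surjective homomorphism; pulling back the translation action of $\mathbb{Z}$ on the simplicial line $\mathbb{R}$ gives an action of $A_\Gamma$ on a tree without inversions in which no vertex is fixed by all of $A_\Gamma$ (indeed no vertex is fixed by any preimage of $1$); therefore $A_\Gamma$ does not have property ${\rm F}\mathcal{A}$. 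I would present it in this short form, perhaps remarking that the amalgam decomposition of Lemma \ref{decomposition} gives an alternative geometric argument in the non-complete case.
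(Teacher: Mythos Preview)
Your proposal is correct and, in its final ``cleanest'' form, is essentially identical to the paper's proof: the paper also defines the surjection $\varphi\colon A_\Gamma\to\mathbb{Z}$ sending every standard generator to $1$, checks that the braid relations are preserved, and then invokes Serre's theorem that a countable group with a $\mathbb{Z}$-quotient cannot have property ${\rm F}\mathcal{A}$ (which is exactly your translation-action argument packaged as a citation). Your preliminary detour through the amalgam decomposition is unnecessary, as you yourself note, and the paper omits it entirely.
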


\begin{proof}
Due to \cite[Thm. 15]{Serre} a countable group cannot have property F$\mathcal{A}$ if it has a quotient isomorphic to $\mathbb{Z}$. Given an Artin group $A_\Gamma$ with standard generating set $V=\{v_1,...,v_n\}$ we define a homomorphism $\varphi\colon A_\Gamma\to \mathbb{Z}$ by setting $\varphi(v_i)=1$ for every $i\in\{1,...,n\}$. Due to the universal property of group presentations this defines a homomorphism, since all relations are in the kernel. Furthermore this map is surjective, which means that $A_\Gamma$ indeed has a quotient isomorphic to $\mathbb{Z}$, namely $A_\Gamma/\ker(\varphi)$.
\end{proof}
Moving on to property ${\rm F}\mathcal{A'}$ we need the following lemma to show that ${\rm F}\mathcal{A'}$ subgroups of Artin groups have to be contained in complete parabolic subgroups.

\begin{lemma}[\cite{Serre}, Prop. 26]
\label{twogroups}
Let $\psi\colon G\to{\rm Isom}(T)$ be a simplicial action on a tree $T$ without inversion. Let $A$ and $B$ be subgroups of $G$. If ${\rm Fix}(\psi(A))\neq\emptyset, {\rm Fix}(\psi(B))\neq\emptyset$ and ${\rm Fix}(\psi(ab))\neq\emptyset$ for all $a\in A$ and $b\in B$, then ${\rm Fix}(\psi(\langle A, B\rangle))\neq\emptyset$.
\end{lemma}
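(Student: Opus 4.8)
The statement is Proposition 26 from Serre's book, so strictly speaking one could simply cite it, but let me sketch the argument I would give if a proof were wanted. The plan is to use the structure of fixed-point sets in trees: if $F_A := {\rm Fix}(\psi(A))$ and $F_B := {\rm Fix}(\psi(B))$ are non-empty subtrees, then either they intersect, in which case $F_A \cap F_B \subseteq {\rm Fix}(\psi(\langle A,B\rangle))$ and we are done immediately, or they are disjoint. So the entire content is in handling the disjoint case, and the hypothesis ${\rm Fix}(\psi(ab)) \neq \emptyset$ for all $a \in A$, $b \in B$ must be used precisely there to derive a contradiction.

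First I would set up the combinatorial picture. Suppose $F_A \cap F_B = \emptyset$. Since $T$ is a tree, there is a unique geodesic segment $[p,q]$ realizing the distance between the subtrees $F_A$ and $F_B$, with $p$ the point of $F_A$ closest to $F_B$ and $q$ the point of $F_B$ closest to $F_A$, and $[p,q] \cap F_A = \{p\}$, $[p,q] \cap F_B = \{q\}$. Let $e$ be the first edge of this segment emanating from $p$, pointing towards $q$. The key geometric observation is that any element $a \in A$ that moves $e$ must move it to an edge at $p$ pointing away from $F_B$ (since $a$ fixes $p$ but $e \notin F_A$, and by uniqueness of the closest-point projection $a$ cannot fix the direction towards $q$ unless $a$ fixes all of $[p,q]$, contradicting $F_A \cap F_B = \emptyset$). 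Symmetrically for $b \in B$ and the last edge $e'$ at $q$.

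Then I would pick any $a \in A$, $b \in B$ with $a(p) = p$, $b(q) = q$ and show that $ab$ (or a suitable conjugate/power, e.g. $(ab)$ itself when $a$ and $b$ are chosen to be non-trivial on the relevant directions — one must be slightly careful if one of the groups acts trivially near $p$ or $q$, but then $F_A \supseteq [p,q]$ or similar and we're back in the intersecting case) acts as a hyperbolic isometry whose axis contains the segment $[p,q]$. The standard ping-pong / translation-length computation gives that $ab$ translates along an axis by twice the distance $d(p,q) > 0$, hence has no fixed point, contradicting the hypothesis ${\rm Fix}(\psi(ab)) \neq \emptyset$. The main obstacle — and the only real subtlety — is the degenerate bookkeeping: making sure one can choose $a \in A$ and $b \in B$ that genuinely "rotate" the edges $e$ and $e'$ off the axis rather than fixing them, and checking that if no such choice exists then in fact the fixed sets were not disjoint after all. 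Once that case analysis is clean, the hyperbolicity of $ab$ and the resulting contradiction are routine.

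With the disjoint case excluded, $F_A \cap F_B$ is a non-empty subtree fixed pointwise by both $A$ and $B$, hence by every product $ab$ and therefore by the subgroup $\langle A, B\rangle$ they generate, giving ${\rm Fix}(\psi(\langle A,B\rangle)) \neq \emptyset$.
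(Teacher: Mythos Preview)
The paper does not supply its own proof of this lemma; it simply cites Serre, so there is nothing to compare against beyond the original source. Your sketch is the standard Serre argument and is correct: reducing to the case $F_A\cap F_B=\emptyset$, taking the bridge $[p,q]$, choosing $a\in A$ that moves the first edge $e$ at $p$ and $b\in B$ that moves the last edge $e'$ at $q$ (both exist precisely because $p$ and $q$ are the closest points of $F_A$, $F_B$), and then checking that $ab$ is hyperbolic with translation length $2d(p,q)$. The only point worth tightening in a full write-up is the one you flag yourself: one should verify that with these specific choices ${\rm Fix}(\psi(a))\cap{\rm Fix}(\psi(b))=\emptyset$. This follows because ${\rm Fix}(\psi(a))\cap[p,q]=\{p\}$ and ${\rm Fix}(\psi(b))\cap[p,q]=\{q\}$, and any hypothetical common fixed point $x$ would force the geodesic $[p,q]$ to decompose as $[p,y]\cup[y,q]$ with $[p,y]\subset{\rm Fix}(\psi(a))$ and $[y,q]\subset{\rm Fix}(\psi(b))$ for some $y\in[p,q]$, giving $y=p=q$, a contradiction. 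Once that is in place, the hyperbolicity of $ab$ follows from the elementary fact (Serre I.6.4) that the product of two elliptic isometries with disjoint fixed trees is hyperbolic, and your argument is complete.
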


This allows us to prove the following proposition:

\begin{proposition}\label{Prop FA}
Let $\psi\colon G\to{\rm Isom}(T)$ be a simplicial action on a tree $T$ without inversion. If ${\rm Fix}(\psi(g))\neq\emptyset$ for all $g\in G$, then either
\begin{enumerate}
\item ${\rm Fix}(\psi(G))\neq\emptyset$, or
\item there exists a sequence of edges $(e_i)_{i\in\mathbb{N}}$ such that 
$${\rm stab}(e_1)\subsetneq {\rm stab}(e_2)\subsetneq ... .$$ 
Additionally, if $G$ is countable, then $G=\bigcup\limits_{i\in\mathbb{N}} {\rm stab}(e_i)$. 
\end{enumerate} 
\end{proposition}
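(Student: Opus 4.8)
The plan is to analyze the fixed-point sets in the tree and use the fact that, by Lemma \ref{twogroups}, if all pairwise products $ab$ with $a \in A$, $b \in B$ are elliptic (and $A$, $B$ are elliptic), then $\langle A, B \rangle$ is elliptic. First I would suppose that case (1) fails, i.e. ${\rm Fix}(\psi(G)) = \emptyset$, and aim to construct the ascending chain of edge stabilizers. The key structural input is this: under the standing hypothesis that every single element of $G$ is elliptic, I claim that for any two finitely generated subgroups $H_1, H_2 \le G$ that are elliptic, their join $\langle H_1, H_2\rangle$ is also elliptic. Indeed, writing $H_1 = \langle g_1, \dots, g_k\rangle$ and $H_2 = \langle h_1, \dots, h_\ell\rangle$, one can iterate Lemma \ref{twogroups}: any product of a word in the $g_i$'s and a word in the $h_j$'s is an element of $G$, hence elliptic by hypothesis, so the lemma applies. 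Consequently the union $G = \bigcup_n G_n$ over an ascending chain of finitely generated subgroups (when $G$ is countable, enumerate $G = \{f_1, f_2, \dots\}$ and set $G_n = \langle f_1, \dots, f_n\rangle$) has the property that each ${\rm Fix}(\psi(G_n))$ is a non-empty subtree of $T$.

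Next I would exploit that these fixed subtrees form a nested decreasing family ${\rm Fix}(\psi(G_1)) \supseteq {\rm Fix}(\psi(G_2)) \supseteq \cdots$ with empty total intersection (else case (1) would hold). In a tree, a decreasing sequence of non-empty subtrees with empty intersection must have diameters going to infinity in a controlled way: the subtrees "escape to an end". More precisely, I would pick a basepoint and argue that the nearest-point projections of these subtrees to the basepoint stabilize to a single point $p$ only if... — actually the cleaner route is: since $\bigcap_n {\rm Fix}(\psi(G_n)) = \emptyset$, for each vertex $v$ there is an $n$ with $v \notin {\rm Fix}(\psi(G_n))$. Fix any vertex $v_0 \in {\rm Fix}(\psi(G_1))$. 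The subtrees $T_n := {\rm Fix}(\psi(G_n))$ are nested and non-empty; let $w_n$ be the point of $T_n$ closest to $v_0$. Then $d(v_0, w_n)$ is non-decreasing, and I would show it is unbounded: if it were bounded, the $w_n$ would eventually all be equal (a bounded monotone path in a tree stabilizes) to some vertex $w$, which would then lie in every $T_n$, contradicting empty intersection. So $d(v_0, w_n) \to \infty$.

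From the geodesic ray traced out by the $w_n$ (or a subsequence giving strictly increasing distance), I would extract the desired edges: along the geodesic from $v_0$ toward the $w_n$'s, pick edges $e_1, e_2, \dots$ with $e_i$ lying on the geodesic $[v_0, w_{n_i}]$ at distance going to infinity, arranged so that $e_1$ is separated from $e_2$ which is separated from $e_3$, etc. Because $w_{n_i} \in T_{n_i} = {\rm Fix}(\psi(G_{n_i}))$ and $v_0 \in T_1 \subseteq T_{n_i}$... wait — $v_0$ need not be fixed by $G_{n_i}$. Instead I would use: $G_{n_i}$ fixes $w_{n_i}$, hence stabilizes the geodesic $[w_{n_i}, g w_{n_i}]$ for $g \in G_{n_{i-1}}$... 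The cleanest formulation: since $T_{n_i} \subseteq T_{n_{i-1}}$ are nested fixed subtrees and the closest points $w_{n_{i-1}}, w_{n_i}$ to $v_0$ satisfy $d(v_0,w_{n_{i-1}}) < d(v_0,w_{n_i})$, the group $G_{n_{i-1}}$ fixes $w_{n_{i-1}}$ and hence fixes every edge on the geodesic segment joining $w_{n_{i-1}}$ to $w_{n_i}$ that is "between" them appropriately — more carefully, $G_{n_{i-1}}$ fixes $w_{n_{i-1}}$ and $G_{n_i} \subseteq G_{n_{i-1}}$ also fixes $w_{n_i}$, so $G_{n_i}$ fixes the whole geodesic $[w_{n_{i-1}}, w_{n_i}]$; pick $e_i$ to be an edge on $[w_{n_{i-1}}, w_{n_i}]$ incident to $w_{n_{i-1}}$. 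Then ${\rm stab}(e_i) \supseteq G_{n_i}$, and since the $G_n$ exhaust $G$ and strictly increase infinitely often (after passing to the subsequence), the ${\rm stab}(e_i)$ form a chain $\subseteq$ whose union is $G$ when $G$ is countable; strictness ${\rm stab}(e_i) \subsetneq {\rm stab}(e_{i+1})$ follows because an element $g \in G_{n_{i+1}} \setminus G_{n_i}$... here one must be slightly careful — strictness of stabilizer inclusion should be arranged by choosing the $e_i$ so that an element fixing $w_{n_i}$ but moving $w_{n_{i-1}}$ exists (such an element exists precisely because $d(v_0, \cdot)$ strictly increased, so $w_{n_{i-1}} \notin T_{n_i}$). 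The main obstacle I anticipate is exactly this bookkeeping: ensuring simultaneously that the edge stabilizers are strictly increasing, that their union is all of $G$, and that the chosen edges are consistently nested along a single ray; this requires carefully interleaving the exhaustion $G = \bigcup G_n$ with the geometry of the nested fixed subtrees, and handling the non-countable case (where one only gets the chain of edges without the union statement). I would also need the elementary tree facts — that a fixed-point set is a subtree, that a decreasing sequence of subtrees with empty intersection forces the "closest point" distances to diverge — which are standard (cf. \cite{Serre}).
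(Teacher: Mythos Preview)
There is a genuine gap. Your geometric extraction of edges does not give \emph{nested} stabilizers. First, a symptom of the confusion: you write ``$G_{n_i} \subseteq G_{n_{i-1}}$'', but of course $G_{n_{i-1}} \subseteq G_{n_i}$, and correspondingly $T_{n_i} \subseteq T_{n_{i-1}}$. With the indices sorted out, your edge $e_i$ on $[w_{n_{i-1}}, w_{n_i}]$ lies in $T_{n_{i-1}}$ (since $w_{n_i} \in T_{n_i} \subseteq T_{n_{i-1}}$ and $T_{n_{i-1}}$ is convex), so what you actually get is ${\rm stab}(e_i) \supseteq G_{n_{i-1}}$, not $G_{n_i}$. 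More importantly, even granting ${\rm stab}(e_i) \supseteq G_{n_{i-1}}$ and ${\rm stab}(e_{i+1}) \supseteq G_{n_i}$, there is no reason whatsoever for ${\rm stab}(e_i) \subseteq {\rm stab}(e_{i+1})$: an element of $G$ that happens to fix the edge $e_i$ near $w_{n_{i-1}}$ need not fix the edge $e_{i+1}$ further along the ray. So you obtain edges whose stabilizers each contain a term of your exhaustion, but not a \emph{chain} of edge stabilizers as the proposition demands. Your approach also requires an enumeration of $G$ from the outset, so the non-countable case is not handled at all.

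The paper's argument sidesteps this with a clean dichotomy. Either some subgroup $H \le G$ has ${\rm Fix}(\psi(H))$ equal to a single vertex $v$; then Lemma~\ref{twogroups} applied to $H$ and $\langle g\rangle$ forces every $g \in G$ to fix $v$, and we are in case (1). Or every subgroup with non-empty fixed set fixes at least one edge. In that second situation one builds the chain by enlarging \emph{the stabilizer itself}: given $e_i$, pick any $g_{i+1} \in G \setminus {\rm stab}(e_i)$ and apply Lemma~\ref{twogroups} with $A = {\rm stab}(e_i)$ and $B = \langle g_{i+1}\rangle$; the resulting non-empty fixed set contains an edge $e_{i+1}$, and then automatically ${\rm stab}(e_{i+1}) \supseteq \langle {\rm stab}(e_i), g_{i+1}\rangle \supsetneq {\rm stab}(e_i)$. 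This gives the strict chain directly, works for arbitrary $G$, and in the countable case one simply chooses the $g_{i+1}$ from an enumeration to ensure the union is all of $G$. The missing idea in your attempt is precisely this: feed the current edge stabilizer, not an external finitely generated approximation, back into Lemma~\ref{twogroups}.
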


\begin{proof}
We differentiate two cases:

\bigskip\noindent
\textit{Case 1: There exists a subgroup $H\subset G$ such that ${\rm Fix}(\psi(H))$ consists of exactly one vertex $v$.}

Then for $g\in G$ we know by Lemma \ref{twogroups} that ${\rm Fix}(\psi(\langle H, g\rangle))\neq\emptyset$, hence $\psi(g)(v)=v$. Since $g$ is arbitrary we have  ${\rm Fix}(\psi(G))=\left\{v\right\}$. 

\bigskip\noindent	 
\textit{Case 2: For any subgroup $H\subset G$ such that ${\rm Fix}(\psi(H))\neq \emptyset$ the fixed point set ${\rm Fix}(\psi(H))$ contains always an edge.}

Let $g_1$ be in $G$. By assumption the fixed point set of $\psi(g_1)$ contains at least one edge $e_1\in {\rm Fix}(\psi(g_1))$. If $e_1\in{\rm Fix}(\psi(G))$, then we are done. Otherwise there exists $g_2\in G-{\rm stab}(e_1)$. By Lemma \ref{twogroups} the fixed point set ${\rm Fix}(\psi(\langle {\rm stab}(e_1), g_2\rangle))$ is non-empty and by assumption this set contains at least one edge $e_2\in{\rm  Fix}(\psi(\langle{\rm stab}(e_1), g_2\rangle))$. We obtain
$${\rm stab}(e_1)\subsetneq {\rm stab}(e_2).$$

If $e_2\in {\rm Fix}(\psi(G))\neq\emptyset$, then we are done. Otherwise there exists $g_3\in G-{\rm stab}(e_2)$ and we proceed as before. By this construction we obtain an ascending chain of ${\rm stab}(e_i)$. Now there are two possibilities: this chain either stabilizes or it does not.

If it stabilizes, then there exists an edge $e_n$ such that $G={\rm stab}(e_n)$ and thus $e_n\in{\rm Fix}(\psi(G))$. If the chain does not stabilize, then we end up in case (2) of Proposition \ref{Prop FA} and if $G$ is countable, then it is straightforward to verify that it is possible to write $G=\bigcup\limits_{i\in\mathbb{N}} {\rm stab}(e_i)$. This can be done with a slight modification of the chain construction by always picking a specific element $g_m\in G-{\rm stab}(e_{m-1})$.
\end{proof}

\begin{corollary}\label{AmalgamFA}
Let $G=A\ast_{C}B$ denote an amalgamated free product and $H\subset G$ be a subgroup. If $H$ is an ${\rm F}\mathcal{A}'$-group, then 
\begin{enumerate}
\item $H$ is contained in a conjugate of $A$ or $B$ or
\item there exists  a sequence of elements $(g_i)_{i\in\mathbb{N}}$, $g_i\in G$  such that 
$$g_1Cg_1^{-1}\cap H\subsetneq g_2Cg_2^{-1}\cap H\subsetneq\ldots$$
Additionally, if $H$ is countable, then $H=\bigcup\limits_{i\in\mathbb{N}} g_iCg_i^{-1}\cap H$.
\end{enumerate}	
\end{corollary}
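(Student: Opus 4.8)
The plan is to apply Proposition~\ref{Prop FA} directly to the action $\psi$ of $G = A *_C B$ on its Bass-Serre tree $T = T_{A *_C B}$, restricted to the subgroup $H$. Concretely, one considers the restricted action $\psi|_H \colon H \to {\rm Isom}(T)$. Since $H$ is an ${\rm F}\mathcal{A}'$-group and this is in particular a simplicial action on a tree without inversion, the hypothesis of Proposition~\ref{Prop FA} is satisfied: ${\rm Fix}(\psi(h)) \neq \emptyset$ for all $h \in H$. Therefore one of the two alternatives of Proposition~\ref{Prop FA} holds for $H$ acting on $T$, and the task is to translate each alternative into the stated conclusion about conjugates of $A$ or $B$ and conjugates of $C$.

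First I would treat alternative (1) of Proposition~\ref{Prop FA}: ${\rm Fix}(\psi(H)) \neq \emptyset$, so $H$ fixes some vertex of $T$. Recalling from the preceding discussion that the vertices of $T$ are the cosets $gA$ and $gB$ and that the stabilizer of $gA$ is $gAg^{-1}$ (resp.\ $gBg^{-1}$ for $gB$), a global fixed vertex means $H \subset gAg^{-1}$ or $H \subset gBg^{-1}$ for some $g \in G$; this is exactly conclusion (1) of the corollary. Then I would treat alternative (2): there is a sequence of edges $(e_i)_{i \in \mathbb{N}}$ of $T$ with ${\rm stab}(e_1) \subsetneq {\rm stab}(e_2) \subsetneq \cdots$, where the stabilizers here are taken \emph{inside $H$} (i.e.\ ${\rm stab}(e_i) = \{h \in H \mid \psi(h)(e_i) = e_i\}$), and moreover $H = \bigcup_i {\rm stab}(e_i)$ when $H$ is countable. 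Using the notation of the excerpt, every edge of $T$ has the form $e = e(g)$ with endpoints $gA$ and $gB$, and its stabilizer in $G$ is $gCg^{-1}$; hence the stabilizer of $e_i$ in $H$ is precisely $g_iCg_i^{-1} \cap H$ for a suitable $g_i \in G$. Substituting this identification into the chain and the union statement yields conclusion (2) of the corollary verbatim.

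The only point requiring a little care — and the step I expect to be the mildest obstacle — is bookkeeping the difference between stabilizers in $G$ and stabilizers in $H$: Proposition~\ref{Prop FA} is stated for an abstract group acting on a tree, so applying it to $H$ produces $H$-stabilizers ${\rm stab}(e_i) = H \cap {\rm stab}_G(e_i)$, and one must simply note ${\rm stab}_G(e(g_i)) = g_iCg_i^{-1}$ to get ${\rm stab}(e_i) = g_iCg_i^{-1} \cap H$. There is no genuine difficulty here; everything else is a direct quotation of the structural facts about Bass-Serre trees of amalgams recalled earlier in the section, so the proof is essentially a three-line unwinding of Proposition~\ref{Prop FA} in this special case.

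\begin{pf}
Apply the restricted action $\psi|_H \colon H \to {\rm Isom}(T_{A *_C B})$ to Proposition~\ref{Prop FA}. Since $H$ is an ${\rm F}\mathcal{A}'$-group, ${\rm Fix}(\psi(h)) \neq \emptyset$ for every $h \in H$, so one of the two alternatives holds.

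In the first case ${\rm Fix}(\psi(H)) \neq \emptyset$, so $H$ fixes a vertex of $T_{A *_C B}$. Such a vertex is a coset $gA$ or $gB$ with stabilizer $gAg^{-1}$ or $gBg^{-1}$, respectively; hence $H \subset gAg^{-1}$ or $H \subset gBg^{-1}$, which is conclusion (1).

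In the second case there is a sequence of edges $(e_i)_{i \in \mathbb{N}}$ of $T_{A *_C B}$ with ${\rm stab}(e_1) \subsetneq {\rm stab}(e_2) \subsetneq \cdots$, the stabilizers being taken inside $H$, and $H = \bigcup_{i \in \mathbb{N}} {\rm stab}(e_i)$ whenever $H$ is countable. Each edge has the form $e(g_i)$ with endpoints $g_iA$, $g_iB$ and stabilizer $g_iCg_i^{-1}$ in $G$, so its stabilizer in $H$ equals $g_iCg_i^{-1} \cap H$. Substituting this into the chain and the union gives
$$g_1Cg_1^{-1}\cap H\subsetneq g_2Cg_2^{-1}\cap H\subsetneq\ldots$$
and $H=\bigcup_{i\in\mathbb{N}} g_iCg_i^{-1}\cap H$ when $H$ is countable, which is conclusion (2).
\end{pf}
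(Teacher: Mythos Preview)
Your proof is correct and follows exactly the same approach as the paper: restrict the action of $G$ on its Bass-Serre tree to $H$, apply Proposition~\ref{Prop FA}, and read off the two alternatives using the known description of vertex and edge stabilizers. The paper's version is terser but makes precisely the same identifications, including the point that the edge stabilizers for the restricted action are $g_iCg_i^{-1}\cap H$.
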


\begin{proof}
The group $G$ acts on its Bass-Serre tree without inversion. Restricting this action to $H$ and applying Proposition \ref{Prop FA} shows the corollary since the stabilizers of vertices are conjugates of $A$ or $B$ and the stabilizers of edges are conjugates of $C$ intersected with $H$.
\end{proof}

\begin{proposition}
Let $A_\Gamma$ be an Artin group in the class $\mathcal{A}$ and let $H\subset A_\Gamma$ be a subgroup. If $H$ is an ${\rm F}\mathcal{A}'$-group, then $H$ is contained in a conjugate of a special complete subgroup of $A_\Gamma$.
\end{proposition}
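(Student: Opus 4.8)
The plan is to argue entirely inside $A_\Gamma$, through the parabolic closure of $H$, so as never to have to pass to a standard parabolic subgroup of $A_\Gamma$ (whose membership in $\mathcal{A}$ is not available to us). Since $A_\Gamma$ lies in $\mathcal{A}$ it has property (Int++), so Proposition \ref{PC} furnishes the parabolic closure $PC_\Gamma(H)$; I would write $PC_\Gamma(H)=aA_{\Gamma_0}a^{-1}$ with $\Gamma_0$ an induced subgraph of $\Gamma$ whose vertex set has minimal cardinality among parabolic subgroups of $A_\Gamma$ containing $H$, so that moreover $PC_\Gamma(H)$ is contained in every parabolic subgroup of $A_\Gamma$ that contains $H$. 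Since the conclusion of the proposition is invariant under conjugation, and $PC_\Gamma(a^{-1}Ha)=a^{-1}PC_\Gamma(H)a=A_{\Gamma_0}$ while $a^{-1}Ha$ is again an ${\rm F}\mathcal{A}'$-group, I may assume $a=1$, so $H\subseteq A_{\Gamma_0}$ and $PC_\Gamma(H)=A_{\Gamma_0}$. The whole proof then reduces to showing that $\Gamma_0$ is free of infinity: in that case $A_{\Gamma_0}$ is the required complete standard parabolic subgroup containing $H$.

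So suppose $\Gamma_0$ is not complete and fix vertices $s,t\in V(\Gamma_0)$ with $\{s,t\}\notin E(\Gamma)$. By Lemma \ref{decomposition} one obtains the amalgam $A_{\Gamma_0}\cong A_{I_0}\ast_{A_{K_0}}A_{J_0}$ with $I_0=V(\Gamma_0)\setminus\{s\}$, $J_0=V(\Gamma_0)\setminus\{t\}$ and $K_0=V(\Gamma_0)\setminus\{s,t\}$, so $|I_0|=|J_0|=|V(\Gamma_0)|-1$ and $|K_0|=|V(\Gamma_0)|-2$. I would restrict the Bass--Serre action of $A_{\Gamma_0}$ on the associated tree to $H$; since $\Gamma$ is finite, $A_\Gamma$ is finitely generated and hence $H$ is countable, so Corollary \ref{AmalgamFA} applies and leaves exactly two possibilities to rule out, each of which I will contradict using the minimality of $PC_\Gamma(H)$.

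In the first possibility $H$ is contained in a conjugate $gA_{I_0}g^{-1}$ (or $gA_{J_0}g^{-1}$) with $g\in A_{\Gamma_0}$; since $I_0\subseteq V(\Gamma)$, this conjugate is a parabolic subgroup of $A_\Gamma$ containing $H$ whose defining graph has $|V(\Gamma_0)|-1$ vertices, so $A_{\Gamma_0}=PC_\Gamma(H)\subseteq gA_{I_0}g^{-1}$, which contradicts Proposition \ref{ArtinVertices}. In the second possibility there is a sequence $(g_i)$ in $A_{\Gamma_0}$ such that, writing $C_i:=g_iA_{K_0}g_i^{-1}$ (a parabolic subgroup of $A_\Gamma$ whose defining graph has $|V(\Gamma_0)|-2$ vertices), one has $C_1\cap H\subsetneq C_2\cap H\subsetneq\cdots$ and $H=\bigcup_i(C_i\cap H)$. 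Here I would pass to parabolic closures: each $PC_\Gamma(C_i\cap H)$ exists by Proposition \ref{PC} and is contained in $C_i$ (as $C_i$ is a parabolic subgroup containing $C_i\cap H$), so its defining graph has at most $|V(\Gamma_0)|-2$ vertices; by Lemma \ref{subsetPC} these closures form an ascending chain of parabolic subgroups, and by Proposition \ref{ArtinVertices} the numbers of vertices of their defining graphs are non-decreasing and bounded, hence eventually constant, whereupon the equality case of Proposition \ref{ArtinVertices} forces the chain of parabolic subgroups itself to stabilize at some $Q$ whose defining graph has fewer than $|V(\Gamma_0)|$ vertices. Since $H=\bigcup_i(C_i\cap H)\subseteq\bigcup_iPC_\Gamma(C_i\cap H)=Q$, minimality gives $A_{\Gamma_0}=PC_\Gamma(H)\subseteq Q$, again contradicting Proposition \ref{ArtinVertices}. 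Both possibilities being impossible, $\Gamma_0$ is free of infinity, which proves the proposition.

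I expect the second case of Corollary \ref{AmalgamFA} to be the genuine obstacle: one must upgrade a strictly increasing union of intersections of $H$ with parabolic subgroups of bounded ``rank'' into the statement that $H$ sits inside a single parabolic subgroup of bounded rank, and this is exactly where property (Int++) (via Proposition \ref{PC} and Lemma \ref{subsetPC}), the rigidity of vertex counts under inclusion of parabolic subgroups (Proposition \ref{ArtinVertices}), and the finiteness of $\Gamma$ all come into play simultaneously. It is also the reason the argument is organised around the single ambient group $A_\Gamma$, whose membership in $\mathcal{A}$ we are entitled to use, rather than around a recursion into the factors $A_{I_0}$ and $A_{J_0}$.
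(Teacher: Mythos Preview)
Your proof is correct and uses the same toolkit as the paper---the amalgam decomposition (Lemma \ref{decomposition}), the Bass--Serre tree dichotomy (Corollary \ref{AmalgamFA}), parabolic closures (Proposition \ref{PC}, Lemma \ref{subsetPC}), and the rank rigidity of Proposition \ref{ArtinVertices}---but is organised differently. The paper argues iteratively: it decomposes $A_\Gamma$ as $A_{st(v)}\ast_{A_{lk(v)}}A_{V-\{v\}}$, and in both cases of Corollary \ref{AmalgamFA} it lands $H$ inside a conjugate of a proper standard parabolic (either a factor directly, or $g_nA_{st(v)}g_n^{-1}$ via the stabilising chain of closures), then repeats the decomposition on that smaller factor until the graph is complete. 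You instead front-load the parabolic closure, setting $PC_\Gamma(H)=A_{\Gamma_0}$ with $|V(\Gamma_0)|$ minimal, and turn both cases of Corollary \ref{AmalgamFA} into a single-step contradiction with that minimality. This buys you a non-iterative argument and, as you note, makes explicit that only the ambient group $A_\Gamma$ need lie in $\mathcal{A}$; the paper's iteration also stays inside $A_\Gamma$ (all closures are $PC_\Gamma$), so there is no actual gap there, but your organisation is cleaner. The one cosmetic difference is your choice of the symmetric splitting $A_{I_0}\ast_{A_{K_0}}A_{J_0}$ rather than the star/link splitting; either works.
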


\begin{proof}
Let $A_\Gamma$ denote an Artin group in the class $\mathcal{A}$ and $H\subset A_\Gamma$ an $\mathcal{FA'}$ subgroup. If $\Gamma$ is not complete, then decompose $A_\Gamma$ as an amalgamated product according to Lemma \ref{decomposition}, thus $A_\Gamma=A_{st(v)}\ast_{A_{lk(v)}} A_{ V-v}$.  We now apply Corollary \ref{AmalgamFA} to the amalgamated product $A_\Gamma=A_{st(v)}\ast_{A{_{lk(v)}}} A_{ V-v}$.

If we are in case (1), we are in a conjugate of one of the factors and we repeat the process of decomposing the factor into an amalgam.

If we are in case (2), we obtain a proper infinite chain
$$g_1A_{lk(v)}g_1^{-1}\cap H\subsetneq g_2A_{lk(v)}g_2^{-1}\cap H\subsetneq...$$ 
and since $A_\Gamma$ is countable we know that $H$ is countable, hence $H=\bigcup\limits_{i\in\mathbb{N}} g_i A_{lk(v)}g_i^{-1}\cap H$.

By Lemma \ref{subsetPC} we can transform the above chain into the following chain of parabolic subgroups
$$PC_\Gamma(g_1A_{lk(v)}g_1^{-1}\cap H)\subset PC_\Gamma(g_2A_{lk(v)}g_2^{-1}\cap H)\subset...$$ 
and we also have
$$H\subset \bigcup\limits_{i\in\mathbb{N}} PC_\Gamma(g_i A_{lk(v)}g_i^{-1}\cap H)\,.$$
We can now apply Proposition \ref{ArtinVertices} to see that the above chain of parabolic subgroups stabilizes, say at $PC_\Gamma(g_n A_{lk(v)} g_n^{-1}\cap H)$. Then we have 
$$H\subset PC_\Gamma(g_n A_{lk(v)} g_n^{-1}\cap H)\subset g_n A_{lk(v)}g_n^{-1}\subset g_n A_{st(v)}g_n^{-1}\,.$$ 
If the subgraph $st(v)$ is complete we are done, if not then  we decompose $A_{st(v)}$ again and proceed as before. This process stops at some point since the graph $\Gamma$ is finite and we remove at least one vertex at every step of the decomposition.
\end{proof}

Before moving to the proof of Theorem \ref{MainTheorem1} we recall a basic result about $lcH$-slender groups. By definition, a discrete group $G$ is called $lcH$-slender if every group homomorphism from a locally compact Hausdorff group into $G$ is continuous. From here on we will be dealing with locally compact Hausdorff groups and their basic properties. Everything that will be used can be found in \cite{Stroppel} and theorems will be cited, however not every basic property will be cited in that way. 

\begin{lemma}\label{lchtorsion}  
Let $G$ be an lcH-slender group. Then $G$ is torsion free.
\end{lemma}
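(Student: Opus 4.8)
The plan is to argue by contraposition: assuming $G$ contains a nontrivial torsion element, I will build a locally compact Hausdorff group $L$ together with a non-continuous homomorphism $\psi\colon L\to G$, contradicting $lcH$-slenderness. First I recall the elementary observation that, since $G$ carries the discrete topology, a homomorphism $\varphi\colon L\to G$ from a topological group $L$ is continuous if and only if $\ker\varphi$ is open in $L$: if the kernel is open then every fibre is a coset of an open subgroup and hence open, while conversely $\{1\}$ is open in $G$ so its preimage is open. Thus it suffices to produce $L$ and $\psi\colon L\to G$ whose kernel is not open.

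Next, suppose $g\in G$ has finite order $n>1$ and fix a prime $p$ dividing $n$; replacing $g$ by $g^{n/p}$ we may assume $g$ has order exactly $p$, so there is an injection $\iota\colon \Z/p\Z\hookrightarrow G$ sending a generator to $g$. Take $L=\prod_{i\in\N}\Z/p\Z$ with the product topology; this is a compact Hausdorff group, hence locally compact Hausdorff. Regard $L$ as a vector space over the field $\mathbb{F}_p=\Z/p\Z$ and let $S=\bigoplus_{i\in\N}\Z/p\Z\subsetneq L$ be the $\mathbb{F}_p$-subspace of finitely supported sequences. The quotient $L/S$ is a nonzero $\mathbb{F}_p$-vector space, so by a Hamel basis argument there is a nonzero $\mathbb{F}_p$-linear functional $\bar\psi\colon L/S\to\mathbb{F}_p$; composing with the canonical projection gives a surjective group homomorphism $\psi_0\colon L\to\Z/p\Z$ with $S\subseteq\ker\psi_0$.

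I then verify that $\psi_0$ is not continuous. If it were, $\ker\psi_0$ would be an open subgroup of $L$ and hence would contain a basic open neighbourhood of the identity, i.e.\ a subgroup of the form $U_m=\{x\in L\mid x_i=0 \text{ for } i<m\}$ for some $m\in\N$. But every $x\in L$ differs from its tail $(0,\dots,0,x_m,x_{m+1},\dots)\in U_m$ by a finitely supported sequence, so $L=U_m+S\subseteq\ker\psi_0$, contradicting surjectivity of $\psi_0$. Finally, set $\psi=\iota\circ\psi_0\colon L\to G$; then $\ker\psi=\ker\psi_0$ is not open, so $\psi$ is a non-continuous homomorphism from a locally compact Hausdorff group into $G$, contradicting $lcH$-slenderness. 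Hence $G$ is torsion free.

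I expect the only point requiring genuine care to be the construction of the discontinuous homomorphism $\psi_0$: one must exploit that $L/S$ is a vector space over $\mathbb{F}_p$ (so that a nonzero functional exists, invoking the axiom of choice) and combine this with the description of the open subgroups of $\prod_{\N}\Z/p\Z$ as precisely those containing some $U_m$. The reduction to prime order and the equivalence "continuous $\Leftrightarrow$ open kernel" are routine.
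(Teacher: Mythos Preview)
Your proof is correct and follows essentially the same strategy as the paper: build a discontinuous homomorphism from the compact group $L=\prod_{\N}\Z/p\Z$ into $\Z/p\Z\hookrightarrow G$ using the $\mathbb{F}_p$-vector-space structure of $L$ and a Hamel-basis argument. The only cosmetic difference is that the paper extends the sum-of-coordinates functional from $S=\bigoplus_{\N}\Z/p\Z$ to $L$ (so its functional is nonzero on $S$), whereas you factor through $L/S$ (so your functional vanishes on $S$); your discontinuity check via $L=U_m+S$ is in fact a bit cleaner than the paper's, but the two arguments are otherwise identical.
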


\begin{proof} 
Suppose there exists a non-trivial torsion element $g\in G$. Without loss of generality we can assume that $g$ has order $p\in \mathbb{N}$ and $p$ is a prime number (if it is composite, there is a power of $g$ having a prime order). Then $\langle g\rangle\cong \mathbb{Z}/p\mathbb{Z}$.  Now consider the group $\prod_{\mathbb{N}}\mathbb{Z}/p\mathbb{Z}$. This is a compact topological group and also a vector space. The space $\bigoplus_{\mathbb{N}} \mathbb{Z}/p\mathbb{Z}$ is a linear subspace.

We define $\psi\colon\bigoplus_{\mathbb{N}} \mathbb{Z}/p\mathbb{Z}\to \mathbb{Z}/p\mathbb{Z}$ by setting $\psi (x):=\sum_{m\in \mathbb{N}} x_m$. Now we take a basis $B$ of $\bigoplus_{\mathbb{N}} \mathbb{Z}/p\mathbb{Z}$ and extend it to a basis $C$ of $\prod_{\mathbb{N}}\mathbb{Z}/p\mathbb{Z}$. We then extend the map $\psi$ to a map $\varphi\colon \prod_{\mathbb{N}}\mathbb{Z}/p\mathbb{Z}\to \mathbb{Z}/p\mathbb{Z}$ via linear extension of:
$$\varphi(c_j):=\begin{cases} \psi(c_j)&\text{ if }c_j\in B\\
0&\text{  if }c_j\in C-B\\
\end{cases}$$
The linearity of this map guarantees this is a group homomorphism. However this map cannot be continuous because of the following reason: If it was continuous, then $\varphi^{-1}(0)$ would be an open neighborhood $V$ of the identity, since $\{0\}\subset \mathbb{Z}/p\mathbb{Z}$ is an open subset. So (after reordering the factors) $V$ contains a set $A_1\times A_2\times...\times A_m \times \prod_{n>m}\mathbb{Z}/p\mathbb{Z}$. But then the elements $x=(0,0,...)$ and $y=(0,...,0,1,0,0,...)$, where the $1$ is in coordinate $m+1$, both lie in $V$ and we have $ 0=\varphi(x)=\varphi(y)=\varphi(x)+1=1$ which is a contradiction.
\end{proof}

\begin{theorem}
Let $A_\Gamma$ be an Artin group in the class $\mathcal{A}$. 
\begin{enumerate}
\item Let $\psi\colon L\to A_\Gamma$ be a group homomorphism from a locally compact Hausdorff group $L$ into $A_\Gamma$. If $L$ is almost connected, then $\psi(L)$ is contained in a conjugate of a special complete subgroup of $A_\Gamma$.
\item If all special complete subgroups of $A_\Gamma$ are $lcH$-slender, then $A_\Gamma$ is $lcH$-slender.
\end{enumerate}
\end{theorem}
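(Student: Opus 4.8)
The plan is to obtain part (1) as a direct combination of Proposition~\ref{FA} with Alperin's theorem \cite{Alperin} that every almost connected locally compact Hausdorff group has property ${\rm F}\mathcal{A}'$, and then to derive part (2) from part (1) by the standard device of restricting $\psi$ to an open almost connected subgroup of $L$.

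For part (1) I would argue as follows. Since $L$ is almost connected, $L$ has property ${\rm F}\mathcal{A}'$ by \cite{Alperin}. Property ${\rm F}\mathcal{A}'$ passes to homomorphic images: a simplicial action of $\psi(L)$ on a tree without inversion pulls back along $\psi$ to such an action of $L$, and each $\psi(\ell)$ fixes every point fixed by $\ell$, so $\psi(\ell)$ is elliptic. Hence $\psi(L)$ is an ${\rm F}\mathcal{A}'$-subgroup of $A_\Gamma$, and since $A_\Gamma\in\mathcal{A}$, Proposition~\ref{FA} immediately yields that $\psi(L)$ is contained in a complete parabolic subgroup of $A_\Gamma$.

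For part (2), let $\psi\colon L\to A_\Gamma$ be an arbitrary homomorphism from a locally compact Hausdorff group. Because $A_\Gamma$ is discrete, $\psi$ is continuous precisely when $\ker\psi$ is open, and for this it is enough to produce an open subgroup $L_0\le L$ on which $\psi$ is continuous: then $\ker(\psi|_{L_0})=\ker\psi\cap L_0$ is open in $L$, and $\ker\psi$, being a subgroup of $L$ that contains a nonempty open set, is itself open. To produce $L_0$ I would apply van Dantzig's theorem to the totally disconnected group $L/L^\circ$ (see \cite{Stroppel}), obtaining a compact open subgroup $\overline{L_0}\le L/L^\circ$; its preimage $L_0$ in $L$ is an open subgroup, hence locally compact Hausdorff, and $L_0/L_0^\circ\cong\overline{L_0}$ is compact, so $L_0$ is almost connected. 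By part (1), $\psi(L_0)$ is contained in a complete parabolic subgroup $P=gA_Xg^{-1}$ with $A_X$ a complete standard parabolic subgroup. Since conjugation by $g$ is an isomorphism, $P\cong A_X$ is $lcH$-slender by hypothesis, so the corestriction $\psi|_{L_0}\colon L_0\to P$ is continuous; composing with the inclusion of the discrete group $P$ into $A_\Gamma$ shows $\psi|_{L_0}\colon L_0\to A_\Gamma$ is continuous. Hence $\psi$ is continuous and $A_\Gamma$ is $lcH$-slender.

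I expect this argument to be essentially routine once Proposition~\ref{FA} is in hand; the only points demanding a little care are the passage of property ${\rm F}\mathcal{A}'$ to quotients and, in part (2), the extraction of the open almost connected subgroup $L_0$ together with the elementary fact that continuity of a homomorphism into a discrete group may be checked on an open subgroup. The conceptual difficulty has been absorbed into Proposition~\ref{FA}, and hence into Proposition~\ref{ArtinVertices} and the Bass--Serre-theoretic analysis underlying it.
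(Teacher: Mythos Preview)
Your proof of part (1) is identical to the paper's: invoke \cite{Alperin}, note that ${\rm F}\mathcal{A}'$ passes to homomorphic images, and apply Proposition~\ref{FA}.

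For part (2) your argument is correct but organized differently from the paper's. The paper proceeds in two stages: first it restricts $\varphi$ to the connected component $L^\circ$, applies part~(1) and the $lcH$-slenderness hypothesis to deduce that $\varphi|_{L^\circ}$ is continuous, and then uses connectedness of $L^\circ$ to conclude $\varphi(L^\circ)$ is trivial, so that $\varphi$ factors through $L/L^\circ$; second, it applies van Dantzig to $L/L^\circ$, finds a compact open subgroup $K$, applies part~(1) again, and uses continuity plus compactness to get that the image of $K$ is finite---then invokes Lemma~\ref{lchtorsion} (that $lcH$-slender groups are torsion free) to conclude this image is trivial. You instead take the preimage $L_0$ in $L$ of a compact open subgroup of $L/L^\circ$, observe that $L_0$ is open and almost connected, and apply part~(1) a single time. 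This is a cleaner packaging: it uses part~(1) once rather than twice and sidesteps the torsion-free lemma entirely, at the cost of the small extra observation that $L_0^\circ=L^\circ$ so that $L_0$ is genuinely almost connected. The paper's route, on the other hand, makes explicit the stronger intermediate conclusion that $\varphi(L^\circ)$ is trivial. Both arguments are routine once Proposition~\ref{FA} is available.
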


\begin{proof}
It is known that an almost connected locally compact Hausdorff group has property $\mathcal{FA'}$ \cite[Cor. 1]{Alperin}. Thus, by Proposition \ref{FA}, it follows that  $\psi(L)$ is contained in a complete parabolic subgroup since property $ \mathcal{FA'}$ is preserved under images of homomorphisms.
	
Let $\varphi\colon L\to A_\Gamma$ be a group homomorphism from a locally compact Hausdorff group $L$ into an Artin group $A_\Gamma$ that lies in the class $\mathcal{A}$. We give $A_\Gamma$ the discrete topology. By (1) we know that the image of the connected component $L^\circ$ under $\varphi$ is contained in a parabolic complete subgroup $gA_\Delta g ^{-1}$ of $A_\Gamma$. By assumption, any special complete subgroup of $A_\Gamma$ is $lcH$-slender, therefore the restricted group homomorphism $\varphi_{\mid L^\circ}\colon L^\circ\to gA_\Delta g^{-1}$ is continuous. Since the image of a connected group under a continuous group homomorphism is connected, the group $\varphi(L^\circ)$ is trivial and therefore the map $\varphi$ factors through $\psi\colon L/L^\circ\to A_\Gamma$. Since the group $L/L^\circ$ is totally disconnected there exists a compact open subgroup $K\subset L/L^\circ$ by van Danzig's Theorem \cite[III §4, No.6]{Bourbaki}. By (1) we know that $\psi(K)$ is contained in a parabolic complete subgroup $hA_\Omega h^{-1}$. By assumption we know that $\psi_{\mid K}$ is continuous. The image of a compact set under a continuous map is always compact, hence $\psi(K)$ is finite. We also know that a $lcH$-slender group is always torsion free due to Lemma \ref{lchtorsion}, thus $\psi(K)$ is trivial. Hence the map $\psi$ has open kernel, since it contains the compact open group $K$ and is therefore continuous. Since the quotient map $\pi\colon L\to L/L^\circ$ is continuous, so is $\varphi=\psi\circ \pi$.
\end{proof}

\section{The clique-cube complex}
In this section we will be using CAT$(0)$ spaces and their basic properties. For the definition and further properties of these metric spaces we refer to \cite{BridsonHaefliger}.

Associated to an Artin group $A_\Gamma$ is a CAT$(0)$ cube complex $C_\Gamma$ where the dimension is bounded above by the cardinality of $V(\Gamma)$. We describe the construction of this cube complex that is closely related to the Deligne complex introduced in \cite{CharneyDavis}.

For an Artin group $A_\Gamma$ we consider the poset
$$\left\{aA_\Delta\mid a\in A_\Gamma\text{ and }\Delta\text{ is a complete subgraph of }\Gamma \text{ or }\Delta=\emptyset\right\}\,.$$
This poset is ordered by inclusion. We now construct the cube complex $C_\Gamma$ in the usual way: The vertices are the elements in the poset and two vertices $aA_{\Delta_1}\subsetneq bA_{\Delta_2}$ span an $n$-cube if $|\Delta_2|-|\Delta_1|=n$.

The group $A_\Gamma$ acts on $C_\Gamma$ by left multiplication and preserves the cubical structure. Moreover, the action is \emph{strongly cellular} i.e. the stabilizer group of any cube fixes that cube pointwise. Therefore, if a subgroup $H\subset A_\Gamma$ has a global fixed point in $C_\Gamma$, then there exists a vertex in $C_\Gamma$ which is fixed by $H$.
The action is cocompact with the fundamental domain $K$, which is the subcomplex spanned by all cubes with vertices $1A_\Delta$ for $\Delta\subset \Gamma$ a complete subgraph. However the action will in general not be proper, since the stabilizer of a vertex $gA_X$ for $X\neq \emptyset$ is the parabolic complete subgroup $gA_Xg^{-1}$.

\begin{theorem}[\cite{GodelleParisB}, Thm. 4.2]
The clique-cube complex $C_\Gamma$ is a finite-dimensional CAT$(0)$ cube complex.
\end{theorem}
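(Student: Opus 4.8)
The plan is to verify Gromov's link condition: a cube complex is CAT$(0)$ if and only if it is simply connected and the link of each of its vertices is a flag simplicial complex (see \cite{BridsonHaefliger}). Two structural facts come first. That $C_\Gamma$ is a cube complex of dimension at most $|V(\Gamma)|$ follows by analysing intervals of the defining poset: if $aA_{\Delta_1}\subseteq bA_{\Delta_2}$, then, replacing $b$ by $a$ (legitimate since $a^{-1}b\in A_{\Delta_2}$) and using van der Lek's theorem \cite{Vanderlek}, one obtains $V(\Delta_1)\subseteq V(\Delta_2)$, and the cosets lying strictly between $aA_{\Delta_1}$ and $aA_{\Delta_2}$ are precisely the $aA_\Sigma$ with $V(\Delta_1)\subseteq V(\Sigma)\subseteq V(\Delta_2)$. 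Because $\Delta_2$ is complete, all such $\Sigma$ are complete, so this interval is the Boolean lattice on $V(\Delta_2)\setminus V(\Delta_1)$, i.e.\ the face poset of a cube of dimension $|V(\Delta_2)|-|V(\Delta_1)|\le |V(\Gamma)|$; assembling these cubes in the usual way gives $C_\Gamma$.

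Next I would check the flag condition on links. The neighbours of a vertex $v=aA_\Delta$ are the \emph{descending} cosets $aA_{\Delta\setminus\{s\}}$ for $s\in V(\Delta)$ and the \emph{ascending} cosets $aA_{\Delta\cup\{t\}}$ for those $t\notin V(\Delta)$ joined in $\Gamma$ to every vertex of $\Delta$. A set of such neighbours, say descending ones indexed by $S\subseteq V(\Delta)$ and ascending ones indexed by a set $U$ of vertices, spans a simplex of the link exactly when the cube $[aA_{\Delta\setminus S},\,aA_{\Delta\cup U}]$ is present, that is, when $\Delta\cup U$ is a complete subgraph. Descending--descending and descending--ascending pairs of neighbours are always joined by an edge of the link, whereas two ascending neighbours arising from $t_1,t_2$ are joined iff $\{t_1,t_2\}\in E(\Gamma)$. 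So a pairwise-joined family of neighbours produces a $U$ whose vertices are pairwise adjacent and each adjacent to all of $\Delta$; hence $\Delta\cup U$ is complete and the simplex is filled in. Every link is therefore flag, the only ingredient being the tautology that being a complete subgraph is a condition on pairs of vertices.

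The crux is simple connectivity, and I would prove the stronger statement that $C_\Gamma$ is contractible, by induction on the number of non-edges $\{s,t\}\notin E(\Gamma)$. If $\Gamma$ is complete, then $A_{V(\Gamma)}=A_\Gamma$ is itself one of the allowed parabolics, so $1A_{V(\Gamma)}=A_\Gamma$ is the maximum of the poset; every cube of $C_\Gamma$ then lies in a top-dimensional cube $[cA_\emptyset,A_\Gamma]$, all of these share the apex $A_\Gamma$, and any two meet in a face through that apex (again by \cite{Vanderlek}, $g\in A_{\Theta_1}\cap A_{\Theta_2}$ iff $g\in A_{\Theta_1\cap\Theta_2}$), so $C_\Gamma$ deformation retracts onto $A_\Gamma$. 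If $\Gamma$ is not complete, pick a vertex $v$ with $st(v)\neq V(\Gamma)$ and use the splitting $A_\Gamma\cong A_{st(v)}\ast_{A_{lk(v)}}A_{V(\Gamma)\setminus\{v\}}$ of Lemma \ref{decomposition}, with Bass-Serre tree $T$. Every cube $[cA_{\Delta_1},cA_{\Delta_2}]$ of $C_\Gamma$ has $\Delta_2$ complete, hence $V(\Delta_2)\subseteq st(v)$ or $V(\Delta_2)\subseteq V(\Gamma)\setminus\{v\}$; this exhibits $C_\Gamma$ as a graph of spaces over $T$ whose vertex spaces are the translates of the clique-cube complexes of $A_{st(v)}$ and of $A_{V(\Gamma)\setminus\{v\}}$, and whose edge spaces are the translates of the clique-cube complex of $A_{lk(v)}$. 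The graphs $st(v)$, $lk(v)$, $V(\Gamma)\setminus\{v\}$ each have strictly fewer non-edges than $\Gamma$ (the non-edge $\{v,w\}$ with $w\notin st(v)$ disappears), so by induction all vertex and edge spaces are contractible; a graph of contractible spaces over a tree with contractible edge spaces is contractible, whence $C_\Gamma$ is contractible and in particular simply connected. Together with the flag-link computation, Gromov's criterion then yields that $C_\Gamma$ is a finite-dimensional CAT$(0)$ cube complex.

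I expect the main obstacle to be the precise identification of $C_\Gamma$ with this graph of spaces: one must verify that the covering of $C_\Gamma$ by the two families of translated subcomplexes has nerve exactly $T$, i.e.\ that two vertex subcomplexes are either disjoint or meet in a single edge subcomplex according to adjacency in $T$, and this is where van der Lek's equality $A_X\cap A_Y=A_{X\cap Y}$ for standard parabolics (and its coset version) does the real work. Once that bookkeeping is settled the homotopy-colimit argument (van Kampen together with Mayer-Vietoris, using that $T$ is a tree) is routine, and the complete case is the only place where the ``maximum of the poset'' phenomenon is used.
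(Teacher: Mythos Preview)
The paper does not actually prove this theorem; it is quoted verbatim from \cite{GodelleParisB} (Theorem~4.2 there) and used as a black box. So there is no in-paper argument to compare against, and your proposal should be judged on its own.

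Your outline is sound and is the natural direct proof. The interval analysis giving the cubical structure is correct: once you normalise $b=a$, intervals below a complete $\Delta_2$ are Boolean via van der Lek, so cubes have dimension at most $|V(\Gamma)|$. The flag computation at a vertex $aA_\Delta$ is also correct; the key observation you identify---that descending/descending and descending/ascending pairs are always linked while ascending/ascending pairs are linked iff the two new generators are adjacent in $\Gamma$---reduces flagness to ``complete is a pairwise condition''. For contractibility, your induction on the number of non-edges works: in the complete base case $A_\Gamma$ is a cone point and van der Lek's $A_X\cap A_Y=A_{X\cap Y}$ gives exactly the face-intersection statement you need; in the inductive step the decomposition $A_\Gamma\cong A_{st(v)}*_{A_{lk(v)}}A_{V\setminus\{v\}}$ really does exhibit $C_\Gamma$ as a tree of spaces over the Bass--Serre tree $T$, because a vertex $aA_\Delta$ lies in the piece over $gA_{st(v)}$ (resp.\ $hA_{V\setminus\{v\}}$) iff $aA_\Delta\subseteq gA_{st(v)}$ (resp.\ $\subseteq hA_{V\setminus\{v\}}$), and two such pieces meet precisely when the corresponding cosets intersect in $A_\Gamma$, which is the adjacency relation in $T$. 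The main care point you flag---that the nerve is exactly $T$, with no higher overlaps---follows because distinct cosets of the same factor are disjoint and $T$ has no triangles; once that is checked, contractibility is the standard fact that a tree of contractible CW-subcomplexes glued along contractible subcomplexes is contractible (exhaust $T$ by finite subtrees and use Mayer--Vietoris, then pass to the direct limit along subcomplex inclusions).

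Two small things to tighten when you write this out: (i) make explicit that for each coset representative $g$ the translate $gC_{st(v)}$ (resp.\ $gC_{V\setminus\{v\}}$, $gC_{lk(v)}$) embeds as a full subcomplex of $C_\Gamma$, which again uses $A_\Delta\subseteq A_{st(v)}$ and van der Lek; and (ii) in the base case, say explicitly that the straight-line retractions to the apex $A_\Gamma$ agree on overlaps because those overlaps are faces (hence affine subsets) of both top cubes. With those details filled in your proof is complete.
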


It was proven in \cite[Thm. A]{LederVarghese} that any cellular action of a finitely generated group on a finite dimensional CAT(0) cube complex via elliptic isometries always has a global fixed point. In general, this result is not true for not finitely generated groups, not even for actions on trees.

\begin{example}[\cite{Serre} Thm. 15] Consider the group $\mathbb{Q}$ and pick an infinite sequence of elements $\left(g_i\right)_{i\in \mathbb{N}}$ such that $\langle g_1\rangle \subsetneq \langle g_1,g_2\rangle\subsetneq...$. This is possible since $\mathbb{Q}$ is not finitely generated. Set $G_i:=\langle g_1,g_2,...,g_i\rangle$ for $i\in \mathbb{N}$ and define a graph $\Gamma$ in the following way. The set of vertices of $\Gamma$ is the disjoint union of $\mathbb{Q}/G_n$, i.e. the vertices are the cosets $qG_n$ and there is an edge between two vertices if and only if they correspond to consecutive $\mathbb{Q}/G_n$ and $\mathbb{Q}/G_{n+1}$ and correspond under the canonical projection $\mathbb{Q}/G_n\to \mathbb{Q}/G_{n+1}$. 

One can now check that this is a tree with a natural action of $\mathbb{Q}$. The key feature is that if there was a fixed point $P$, then there exists an $n\in \mathbb{N}$ such that $P\in \mathbb{Q}/G_n$ and therefore $\mathbb{Q}=G_n$, a contradiction.
\end{example}

We conjecture that the structure of Artin groups does not allow such an example, that means:

\begin{conjecture}
Let $A_\Gamma$ be an Artin group and $\Phi\colon A_\Gamma\to{\rm Isom}(C_\Gamma)$ be the action on the associated clique-complex via left multiplication. Let $H\subset A_\Gamma$ be a subgroup. If $\Phi(h)$ is elliptic for all $h\in H$, then ${\rm Fix}(\Phi(H))$ is non-empty, thus $\Phi(H)$ is contained in a complete parabolic subgroup. 
\end{conjecture}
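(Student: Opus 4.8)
The plan is to mimic the Bass--Serre argument behind Proposition~\ref{FA}, with the clique-cube complex $C_\Gamma$ playing the role of the tree. First I would record what must actually be produced: since the action $\Phi$ is strongly cellular, ${\rm Fix}(\Phi(H))\neq\emptyset$ holds as soon as $H$ fixes a single vertex $gA_X$ of $C_\Gamma$, and the stabiliser of such a vertex is the complete parabolic subgroup $gA_Xg^{-1}$; conversely, any complete parabolic subgroup $kA_Zk^{-1}$ fixes the vertex $kA_Z$. So both conclusions of the conjecture are equivalent to the single assertion: \emph{if $\Phi(h)$ is elliptic for every $h\in H$, then $H$ is contained in a complete parabolic subgroup of $A_\Gamma$.}

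I would first dispose of the finitely generated case. If $H$ is finitely generated, then $H$ acts cellularly on the finite-dimensional ${\rm CAT}(0)$ cube complex $C_\Gamma$ with every element acting elliptically, so by \cite[Thm.~A]{LederVarghese} the action has a global fixed point; by strong cellularity $H$ then fixes a vertex, hence $H\subseteq gA_Xg^{-1}$ for some $g\in A_\Gamma$ and some complete $A_X$. For a general $H$, write $H=\bigcup_\alpha H_\alpha$ as the directed union of its finitely generated subgroups; by the previous sentence each $H_\alpha$ lies in a complete parabolic subgroup $Q_\alpha$.

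Now comes the step that uses structural information about parabolic subgroups, and where I would impose the hypothesis that $A_\Gamma$ has property (Int+--) --- which by Theorem~\ref{Int} holds whenever $A_\Gamma$ lies in the class $\mathcal{B}$. Under this hypothesis Lemma~\ref{PCcomplete} gives the parabolic closure $P_\alpha:=PC_\Gamma(H_\alpha)$, and $P_\alpha\subseteq Q_\alpha$ by minimality. If $H_\alpha\subseteq H_\beta$ then $H_\alpha$ and $H_\beta$ both lie in the complete parabolic subgroup $Q_\beta$, so Lemma~\ref{subsetPC} yields $P_\alpha\subseteq P_\beta$; thus $\alpha\mapsto P_\alpha$ is monotone on the directed poset $\{H_\alpha\}$. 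Writing $P_\alpha=k_\alpha A_{Z_\alpha}k_\alpha^{-1}$, the integer $|V(Z_\alpha)|$ is non-decreasing in $\alpha$ and bounded above by $|V(\Gamma)|$, so it attains its maximum at some index $\alpha_0$. For any $\beta\geq\alpha_0$ we have $P_{\alpha_0}\subseteq P_\beta$ with $|V(Z_{\alpha_0})|=|V(Z_\beta)|$, so Proposition~\ref{ArtinVertices} forces $P_{\alpha_0}=P_\beta$. By directedness every $H_\gamma$ sits inside some $H_\beta$ with $\beta\geq\alpha_0$, whence $H_\gamma\subseteq P_\gamma\subseteq P_\beta=P_{\alpha_0}$. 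Therefore $H=\bigcup_\gamma H_\gamma\subseteq P_{\alpha_0}\subseteq Q_{\alpha_0}$, so $H$ is contained in the complete parabolic subgroup $Q_{\alpha_0}$ and fixes the corresponding vertex of $C_\Gamma$.

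The main obstacle is exactly the third step. For an arbitrary Artin group there is no parabolic-closure operation available, so although each finitely generated $H_\alpha$ lies in \emph{some} complete parabolic subgroup, there is no canonical choice and no reason these can be organised into a nested --- or even directed --- family, and the finiteness argument powered by Proposition~\ref{ArtinVertices} has nothing to bite on. The obvious geometric substitute, intersecting the nested nonempty convex fixed-sets ${\rm Fix}(\Phi(H_\alpha))$, fails because $C_\Gamma$ is not proper and these sets are typically unbounded; and the $\mathbb{Q}$-example above shows that, without extra algebraic input on the acting group, local ellipticity on a tree-like complex genuinely need not produce a global fixed point. Consequently the argument above proves the conjecture for $A_\Gamma$ in the class $\mathcal{B}$ --- recovering the cube-complex content of Proposition~\ref{FC} and Theorem~\ref{MainTheorem2} --- and the remaining content of the conjecture is precisely to remove the (Int)-type hypothesis.
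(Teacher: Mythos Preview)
Your proposal is correct and essentially matches the paper: the statement is left as an open conjecture there too, and the partial result you prove under property (Int+--) (equivalently, class~$\mathcal{B}$) is exactly Proposition~\ref{globalfixedpoint}, established with the same tools --- \cite{LederVarghese} for finitely generated subgroups, parabolic closures via Lemma~\ref{PCcomplete}, and stabilisation via Proposition~\ref{ArtinVertices}. The only difference is cosmetic: the paper phrases the endgame as a contradiction through the identity ${\rm Fix}(\Phi(B))={\rm Fix}(\Phi(PC_\Gamma(B)))$, while you argue directly with the directed system of parabolic closures; your assessment that removing the (Int)-type hypothesis is precisely the remaining open content is the paper's position as well.
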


Before we prove that this conjecture holds for Artin groups in the class $\mathcal{B}$ we discuss a tool for proving that some fixed point sets are non-empty.
We have
$${\rm Fix}(\Phi(H))=\bigcap\limits_{h\in H}{\rm Fix}(\Phi(h))\,.$$
By definition, a family of subsets $(A_i)_{i\in I}$ of a metric space is said to have the finite intersection property if the intersection of each finite subfamily is non-empty. Monod proved in \cite[Thm. 14]{Monod} that a family consisting of bounded closed convex subsets of a complete CAT$(0)$ space with the finite intersection property has a non-empty intersection.

We consider the family consisting of fixed point sets $({\rm Fix}(\phi(h)))_{h\in H}$. Any fixed point set is closed and convex. Since the CAT(0) space $C_\Gamma$ is a finite dimensional cubical complex we know by \cite[Thm. A]{LederVarghese} that this family has the finite intersection property, but in general a fixed point set does not need to be bounded as the following example shows. Thus we need a different strategy in order to prove that ${\rm Fix}(\phi(H))$ is non-empty. 

\begin{example}
Let $\Gamma$ be the following graph:

\begin{center}
\begin{tikzpicture}
\coordinate[label=above: $a$] (A) at (-2,0);
\coordinate[label=above: $b$] (B) at (0,0);
\coordinate[label=above: $c$] (C) at (2,0);
				
\draw (A) -- (B) node[midway,above]{$2$} ;
\draw (B) -- (C) node[midway, above]{$2$}	;
		
\fill (A) circle (2pt);	
\fill (B) circle (2pt);	
\fill (C) circle (2pt);	
\end{tikzpicture}
\end{center}

Now we consider the corresponding Artin group $A_\Gamma$ and the clique-complex $C_\Gamma$. For the clique-complex the fundamental domain $K$ is given by
 
\begin{center}
	\begin{tikzpicture}[scale=0.5]
		\coordinate[label=below:{$A_{\{a\}}$}] (A) at (-4,0);
		\coordinate[label=below:{$A_{\emptyset}$}] (B) at (0,0);
		\coordinate[label=below:{$A_{\{c\}}$}] (C) at (4,0);
		\coordinate[label=above:{$A_{\{a,b\}}$}] (D) at (-4,4);
		\coordinate[label=above:{$A_{\{b\}}$}] (E) at (0,4);
		\coordinate[label=above:{$A_{\{b,c\}}$}] (F) at (4,4);
		
		\fill (A) circle (2pt);	
		\fill (B) circle (2pt);	
		\fill (C) circle (2pt);	
		\fill (D) circle (2pt);	
		\fill (E) circle (2pt);	
		\fill (F) circle (2pt);	
		
		\draw (A) -- (B);
		\draw (B) -- (C);
		\draw (A) --(D);
		\draw (D) -- (E);
		\draw (E) -- (F);
		\draw (C) -- (F);
		\draw (B) -- (E);
		
		\fill[color=black, opacity=0.3] (A) -- (B) -- (E) -- (D) ;
			\fill[color=black, opacity=0.3] (B) -- (C) -- (F) -- (E) ;
	\end{tikzpicture}
\end{center}

Let $\Phi\colon A_\Gamma \to {\rm Isom}(C_\Gamma)$ denote the natural action by left multiplication. We want to show that ${\rm Fix}(\Phi(b))$ is unbounded. First notice that since $\Gamma$ is not complete, the clique-complex is unbounded itself. First we calculate $K\cap {\rm Fix}(\Phi(b))$: 

Since $b\neq a^n$ and $b\neq c^k$ for any $n,k\in\mathbb{Z}$, $b$ does not fix $A_{\{a\}}$ or $A_{\{c\}}$ and it also can't fix $A_\emptyset$. However $b$ fixes all the vertices in the top row, that is $\Phi(b)(A_{\{a,b\}})=A_{\{a,b\}}$, $\Phi(b)(A_{\{b\}})=A_{\{b\}}$ and $\Phi(b)(A_{\{b,c\}})=A_{\{b,c\}}$ and also the edges between those.

In general we know that $b$ lies in the center of $A_\Gamma$, so following the pattern from above it is easy to see that $\Phi(b)$ fixes vertices of the following form: $wA_\Delta$, where $w\in A_\Gamma$ is an arbitrary element and $\Delta$ is a subgraph of $\Gamma$ containing the vertex $b$. That means any copy of the `top edge' in the fundamental domain is fixed by $\Phi(b)$. Since $K$ is a fundamental domain for the action and $C_\Gamma$ is unbounded, it immediately follows that ${\rm Fix}(\Phi(b))$ is unbounded, too.
\end{example}

Let $A_\Gamma$ be in the class $\mathcal{B}$ and $H$ be a subgroup that is contained in a parabolic complete subgroup. Since the action of $A_\Gamma$ on $C_\Gamma$ is strongly cellular, we can write the parabolic closure of a subgroup $H$ as $$PC_\Gamma(H):=\bigcap\limits_{v\in V(C_\Gamma)}\{{\rm stab}(v)\mid H\subset {\rm stab}(v)\},$$ which coincides with the notion defined in Chapter 2. 

\begin{proposition}
\label{globalfixedpoint}
Let $A_\Gamma$ be in $\mathcal{B}$ and $\Phi\colon A_\Gamma\to{\rm Isom}(C_\Gamma)$ be the action on the associated clique-complex via left multiplication. 
Let $H\subset A_\Gamma$ be a subgroup. If $\Phi(h)$ is elliptic for all $h\in H$, then ${\rm Fix}(\Phi(H))$ is non-empty and therefore $\Phi(H)$ is contained in a complete parabolic subgroup. 
\end{proposition}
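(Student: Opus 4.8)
The plan is to write $H$ as an increasing union of finitely generated subgroups, pass to parabolic closures, and use Proposition~\ref{ArtinVertices} to see that the resulting ascending chain of parabolic subgroups stabilises; the stabilising term will be a complete parabolic subgroup containing $H$ and, being the stabiliser of a vertex of $C_\Gamma$, will provide the desired global fixed point. First I would record the inputs. Since $A_\Gamma\in\mathcal{B}$ it has property (Int), hence property (Int+--) by Theorem~\ref{Int}, so Lemma~\ref{PCcomplete} provides a parabolic closure for every subset of a complete parabolic subgroup. Moreover, since the $A_\Gamma$-action on $C_\Gamma$ is strongly cellular, a subgroup with a global fixed point in $C_\Gamma$ fixes a vertex, and the stabiliser of a vertex $aA_\Delta$ is the complete parabolic subgroup $aA_\Delta a^{-1}$.

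Next I would treat the finitely generated case. If $H_0\le H$ is finitely generated, then every element of $H_0$ acts elliptically on the finite-dimensional ${\rm CAT}(0)$ cube complex $C_\Gamma$, so by \cite[Thm.~A]{LederVarghese} the group $H_0$ has a global fixed point and hence fixes a vertex $v$; thus $H_0\subseteq{\rm stab}(v)$, a complete parabolic subgroup. By Lemma~\ref{PCcomplete} the parabolic closure $PC_\Gamma(H_0)$ exists, and $PC_\Gamma(H_0)\subseteq{\rm stab}(v)$ by minimality. Writing $PC_\Gamma(H_0)=aA_\Delta a^{-1}$, set $r(H_0):=|V(\Delta)|\le|V(\Gamma)|$, which is well defined by Proposition~\ref{ArtinVertices}.

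Since $A_\Gamma$ is finitely generated, hence countable, so is $H$, and I would write $H=\bigcup_{i\ge 1}H_i$ with $H_1\subseteq H_2\subseteq\cdots$ finitely generated subgroups. By Lemma~\ref{subsetPC} we get $PC_\Gamma(H_i)\subseteq PC_\Gamma(H_{i+1})$, so $r(H_i)\le r(H_{i+1})$; as the ranks are bounded by $|V(\Gamma)|$, the sequence $(r(H_i))_i$ is eventually constant, say $r(H_i)=r(H_n)$ for all $i\ge n$. For such $i$ we have $PC_\Gamma(H_n)\subseteq PC_\Gamma(H_i)$ with equal ranks, so Proposition~\ref{ArtinVertices} forces $PC_\Gamma(H_n)=PC_\Gamma(H_i)$; consequently $H_i\subseteq PC_\Gamma(H_i)=PC_\Gamma(H_n)$ for all $i\ge n$, and trivially $H_i\subseteq H_n\subseteq PC_\Gamma(H_n)$ for $i\le n$, whence $H=\bigcup_i H_i\subseteq PC_\Gamma(H_n)$. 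Finally, $H_n$ fixes a vertex $v_n$ with $PC_\Gamma(H_n)\subseteq{\rm stab}(v_n)$, so $H\subseteq{\rm stab}(v_n)$ and therefore $v_n\in{\rm Fix}(\Phi(H))$; since ${\rm stab}(v_n)$ is a complete parabolic subgroup, $\Phi(H)$ is contained in a complete parabolic subgroup.

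I expect the rank-stabilisation step to carry the real content. One might hope to argue metrically, since the family $({\rm Fix}(\Phi(h)))_{h\in H}$ consists of closed convex sets with the finite intersection property; but, as the example preceding the statement shows, these fixed-point sets need not be bounded, so \cite[Thm.~14]{Monod} is unavailable, and it is precisely Proposition~\ref{ArtinVertices} (together with Lemma~\ref{subsetPC}) that makes the ascending chain of parabolic closures terminate. The other point requiring care is that each $PC_\Gamma(H_i)$ genuinely exists — this is exactly where property (Int+--), obtained from the hypothesis $A_\Gamma\in\mathcal{B}$ via Theorem~\ref{Int}, enters.
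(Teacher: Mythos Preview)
Your proof is correct and follows essentially the same approach as the paper: both use \cite[Thm.~A]{LederVarghese} to put each finitely generated $H_i$ inside a complete parabolic subgroup, take parabolic closures (whose existence uses (Int+--) via Theorem~\ref{Int}), and invoke Proposition~\ref{ArtinVertices} to make the ascending chain $PC_\Gamma(H_1)\subseteq PC_\Gamma(H_2)\subseteq\cdots$ stabilise. The only difference is organisational: the paper argues by contradiction through the strictly descending chain of fixed-point sets and the identity ${\rm Fix}(\Phi(B))={\rm Fix}(\Phi(PC_\Gamma(B)))$, whereas you argue directly, which is arguably cleaner.
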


\begin{proof}
If $\Phi(H)$ had no global fixed point, then we could construct an infinite chain 
$${\rm Fix}(\Phi(H_1))\supsetneq {\rm Fix}(\Phi(H_2))\supsetneq...,$$ 
where $H_i:=\langle h_1,h_2,...,h_i\rangle$, since any finitely generated group acting locally elliptically already has a global fixed point due to \cite[Thm. A]{LederVarghese}. 

Now we claim that for a subset $B$ of a parabolic complete subgroup we have 
$${\rm Fix}(\Phi(B))={\rm Fix}(\Phi(PC_\Gamma(B)))\,.$$  
The inclusion $\supset$ is clear since $B\subset PC_\Gamma(B)$. For the other directions, let $v\in {\rm Fix}(\Phi(B))$. It suffices to check that $v\in {\rm Fix}(\Phi(PC_\Gamma(B)))$ since the action is strongly cellular. Since $B$ fixes the vertex $v$, we have $B\subset {\rm stab}(v)$. Due to the definition of the parabolic closure, we therefore obtain $PC_\Gamma(B)\subset {\rm stab}(v)$, and thus $PC_\Gamma(B)$ fixes $v$ as well, or in other words, $v\in {\rm Fix}(\Phi(PC_\Gamma(B)))$.

So, our chain transforms into
$${\rm Fix}(\Phi(PC_\Gamma(H_1)))\supsetneq {\rm Fix}(\Phi(PC_\Gamma(H_2)))\supsetneq...$$ 

On the other hand, since $H_i \subset H_{i+1}$ for all $i$, we have the chain
$$PC_\Gamma(H_1)\subset PC_\Gamma(H_2)\subset...$$ 
Since $\Gamma$ is finite, this chain stabilizes by Proposition \ref{ArtinVertices}.
So, there exists an index $j$ such that $PC_\Gamma(H_i)=PC_\Gamma(H_j)$ for all $i\ge j$, hence 
$${\rm Fix}(\Phi(PC_\Gamma(H_i)))={\rm Fix}(\Phi(PC_\Gamma(H_j))) $$
for all $i\ge j$, which is a contradiction.

Therefore $\Phi(H)$ has a global fixed vertex. Since the stabilizer of a vertex is a complete parabolic subgroup, $\Phi(H)$ is contained in such a group.
\end{proof}

This implies Proposition \ref{FC} from the introduction, since an F$\mathcal{C}'$ group acts locally elliptically on every CAT$(0)$ cube complex.

\begin{corollary}\label{compact}
Let $K$ denote a compact group and $A_\Gamma$ an Artin group in the class $\mathcal{B}$, further let $\phi\colon K\to A_\Gamma$ be a group homomorphism. Then $\phi(K)$ is contained in a complete parabolic subgroup.
\end{corollary}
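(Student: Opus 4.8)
The plan is to derive Corollary~\ref{compact} as a direct consequence of Proposition~\ref{globalfixedpoint} together with the classical fact that compact groups act on CAT$(0)$ cube complexes with a global fixed point. First I would observe that a compact group $K$ has property ${\rm F}\mathcal{C}'$: every cellular action of $K$ on a finite dimensional CAT$(0)$ cube complex is locally elliptic (this is recorded in the excerpt, citing \cite{CapraceMarquis}). In particular, equipping $A_\Gamma$ with the discrete topology and pushing $K$ along $\phi$, the subgroup $\phi(K)\subset A_\Gamma$ is again a quotient of a compact group, hence compact, hence still an ${\rm F}\mathcal{C}'$-group; alternatively one argues directly that ${\rm F}\mathcal{C}'$ passes to homomorphic images.

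Next I would let $A_\Gamma$ act on its clique-cube complex $C_\Gamma$ via $\Phi$ (left multiplication), which by the cited theorem of \cite{GodelleParisB} is a finite-dimensional CAT$(0)$ cube complex, and restrict this action to $H:=\phi(K)$. Since $H$ is an ${\rm F}\mathcal{C}'$-group, every element $\Phi(h)$ with $h\in H$ acts as an elliptic isometry of $C_\Gamma$. This is exactly the hypothesis of Proposition~\ref{globalfixedpoint}, which applies because $A_\Gamma$ is assumed to lie in the class $\mathcal{B}$. The proposition then gives that ${\rm Fix}(\Phi(H))\neq\emptyset$, so $\Phi(H)$ has a global fixed vertex in $C_\Gamma$; since the vertices of $C_\Gamma$ have stabilizers that are complete parabolic subgroups of $A_\Gamma$, it follows that $\phi(K)=H$ is contained in a complete parabolic subgroup of $A_\Gamma$, which is the assertion.

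I do not expect any serious obstacle here: the corollary is essentially a specialization of Proposition~\ref{FC}/Proposition~\ref{globalfixedpoint} to the case $H=\phi(K)$, and the only input not already in the excerpt is that compact groups have property ${\rm F}\mathcal{C}'$ and that this property is inherited by homomorphic images. The one point that warrants a sentence of care is precisely this inheritance: given a cellular action $\rho$ of $\phi(K)$ on a finite dimensional CAT$(0)$ cube complex, one composes with $\phi$ to get a cellular action of $K$, uses that $K$ is ${\rm F}\mathcal{C}'$ to conclude each $\rho(\phi(k))$ is elliptic, and notes that every element of $\phi(K)$ is of the form $\phi(k)$; hence $\rho$ is locally elliptic. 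With that remark in place the proof is two or three lines.

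\begin{pf}
Since $K$ is compact, it is an ${\rm F}\mathcal{C}'$-group by \cite{CapraceMarquis}. Property ${\rm F}\mathcal{C}'$ is preserved under homomorphic images: if $\rho$ is a cellular action of $\phi(K)$ on a finite dimensional CAT$(0)$ cube complex, then $\rho\circ\phi$ is such an action of $K$, so $\rho(\phi(k))$ is elliptic for every $k\in K$, and since every element of $\phi(K)$ has the form $\phi(k)$, the action $\rho$ is locally elliptic. Hence $\phi(K)$ is an ${\rm F}\mathcal{C}'$-group. As $A_\Gamma$ is in the class $\mathcal{B}$, Proposition \ref{globalfixedpoint} applies to $H:=\phi(K)$ with the left-multiplication action $\Phi\colon A_\Gamma\to{\rm Isom}(C_\Gamma)$: since $\Phi(h)$ is elliptic for all $h\in H$, the set ${\rm Fix}(\Phi(H))$ is non-empty. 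Thus $\Phi(H)$ fixes a vertex of $C_\Gamma$, whose stabilizer is a complete parabolic subgroup of $A_\Gamma$, and therefore $\phi(K)$ is contained in a complete parabolic subgroup of $A_\Gamma$.
\end{pf}
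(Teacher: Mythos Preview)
Your proof is correct and follows essentially the same route as the paper: both argue that the action of $\phi(K)$ on the clique-cube complex $C_\Gamma$ is locally elliptic and then invoke Proposition~\ref{globalfixedpoint}. The only cosmetic difference is the reference used for the local ellipticity of compact group actions on CAT$(0)$ cube complexes---the paper cites \cite[Cor.~4.4]{MoellerVarghese} directly, whereas you go through the ${\rm F}\mathcal{C}'$ property via \cite{CapraceMarquis} and its inheritance by images.
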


\begin{proof}
Due to \cite[Cor. 4.4]{MoellerVarghese} the compact group $K$ acts locally elliptically on $C_\Gamma$. Thus by Proposition \ref{globalfixedpoint} the image of $K$ under $\phi$ is contained in a complete parabolic subgroup.
\end{proof}

\subsection{Proof of Theorem \ref{MainTheorem2}}
First we recall the result of the Main Theorem in \cite{MoellerVarghese}.

\begin{theorem}
Let $\Phi\colon L\to{\rm Isom}(X)$ be a group action of an almost connected locally compact Hausdorff group $L$ on a complete CAT$(0)$ space $X$ of finite flat rank. If
\begin{enumerate}
\item the action is semi-simple,
\item the infimum of the translation lengths of hyperbolic isometries is positive,
\item any finitely generated subgroup of $L$ which acts on $X$ via elliptic isometries has a global fixed point,
\item any subfamily of $\left\{{\rm Fix}(\Phi(l)) \mid l\in L\right\}$ with the finite intersection property
has a non-empty intersection,
\end{enumerate}
then $\Phi$ has a global fixed point.
\end{theorem}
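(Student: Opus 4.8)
The plan is to reduce the theorem to a single assertion: that every isometry $\Phi(l)$, $l\in L$, is elliptic. Once this is established, hypotheses (3) and (4) finish the proof with a short finite-intersection-property argument, and the whole weight of the theorem rests on this first step, where hypotheses (1) and (2) together with the structure of almost connected locally compact Hausdorff groups come into play.

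So I would first assume, for contradiction, that $\Phi(l)$ is hyperbolic for some $l\in L$. By semi-simplicity (1) this is an honest hyperbolic isometry with an axis and positive translation length $\tau:=\tau(\Phi(l))$. The idea is that $l$ lies in an almost connected group, and such groups carry enough ``divisibility'' that one can produce, for arbitrarily large integers $n$, elements whose images under $\Phi$ are hyperbolic with translation length about $\tau/n$, contradicting (2). Concretely, I would pass to the monothetic closed subgroup $A=\overline{\langle l\rangle}$ and use the Flat Torus Theorem and the theory of semi-simple isometries (see \cite{BridsonHaefliger}) to produce a ``translation part'' homomorphism $\rho\colon A\to\mathbb{R}^{m}$ (with $m$ at most the flat rank of $X$) such that $\Phi(a)$ is hyperbolic with $\tau(\Phi(a))=\|\rho(a)\|$ exactly when $\rho(a)\neq 0$; since $\Phi(l)$ is hyperbolic, $\rho(l)\neq 0$. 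Hypothesis (2) then forces $\inf\{\|v\| : v\in\rho(A)\setminus\{0\}\}>0$, hence $\rho(A)$ is discrete, i.e. $A$ surjects onto some $\mathbb{Z}^{j}$ with $j\geq 1$, and in particular onto $\mathbb{Z}$. Now the structure theory is brought in: a monothetic locally compact Hausdorff group is compact or discrete infinite cyclic; a compact group has no nonzero homomorphism to $\mathbb{Z}$; and the remaining case $A\cong\mathbb{Z}$ is ruled out by examining how $l$ sits inside the almost connected group $L$ --- via its image in the profinite quotient $L/L^{\circ}$ and the fact that the identity component, modulo a suitable compact normal subgroup, is a divisible connected Lie group. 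This last case-analysis, i.e. excluding the ``$A\cong\mathbb{Z}$'' alternative using the ambient almost connected group rather than $A$ alone, is where I expect the real difficulty to lie; it is essentially the point where one invokes the analogue of Alperin's theorem that almost connected locally compact Hausdorff groups have property $\mathrm{F}\mathcal{A}'$ \cite{Alperin}, and it is exactly where finite flat rank and hypothesis (2) are indispensable (one cannot, for instance, work with the possibly discontinuous restriction of $\Phi$ to a one-parameter subgroup without such control).

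Granting that every $\Phi(l)$ is elliptic, the endgame is routine. For any $l_{1},\dots,l_{k}\in L$ the subgroup $F=\langle l_{1},\dots,l_{k}\rangle$ is finitely generated, and every element of $F$ acts elliptically (being an element of $L$), so hypothesis (3) gives a point fixed by all of $\Phi(F)$; in particular $\bigcap_{i=1}^{k}\mathrm{Fix}(\Phi(l_{i}))\neq\emptyset$. Hence the family $\{\mathrm{Fix}(\Phi(l))\}_{l\in L}$ has the finite intersection property, and hypothesis (4) yields $\bigcap_{l\in L}\mathrm{Fix}(\Phi(l))\neq\emptyset$; any point of this intersection is a global fixed point of $\Phi$, which is the desired conclusion.
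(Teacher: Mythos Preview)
The paper does not prove this theorem. It is quoted verbatim as ``the result of the Main Theorem in \cite{MoellerVarghese}'' immediately before the proof of Theorem~\ref{MainTheorem2} and is then used as a black box; no argument for it is supplied in the present paper. Hence there is nothing here against which your proposal can be compared.

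As for the proposal on its own terms: the overall architecture --- first establish that every $\Phi(l)$ is elliptic using (1), (2), and the almost-connectedness of $L$, then use (3) to obtain the finite intersection property for the family $\{\mathrm{Fix}(\Phi(l))\}_{l\in L}$ and (4) to conclude --- is the natural one, and your final paragraph is correct and complete. The first step, however, remains only a plan, as you yourself acknowledge. Two concrete concerns: the Flat Torus Theorem in \cite{BridsonHaefliger} is formulated for free abelian groups of finite rank acting by semi-simple isometries, not for arbitrary closed abelian subgroups, so producing the translation homomorphism $\rho$ on $A=\overline{\langle l\rangle}$ requires more than a direct citation; and since $\Phi$ is not assumed continuous, the dichotomy ``$A$ compact or $A\cong\mathbb{Z}$'' for monothetic groups cannot be transported to the action without additional argument. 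For the actual details you should consult \cite{MoellerVarghese} directly.
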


\begin{proof}[Proof of Theorem \ref{MainTheorem2}]
	The proof of the second part is very similar to the proof of Theorem \ref{MainTheorem1}.
Let $\psi\colon L\to A_\Gamma$ be a group homomorphism from an almost connected locally compact Hausdorff group $L$ into an Artin group $A_\Gamma$ that is contained in the class $\mathcal{B}$. Further, let $\Phi\colon A_\Gamma\to{\rm Isom}(C_\Gamma)$ be the action on the associated clique-cube complex via left-multiplication.

Since the dimension of $C_\Gamma$ is finite we know that the flat rank of $C_\Gamma$ is also finite. Further, the first three conditions are satisfied by any cellular action on a finite dimensional CAT$(0)$ cube complex \cite[Thm. A and Prop.]{Bridson}, \cite[Thm. A]{LederVarghese}, hence also the action $\psi\circ\Phi\colon L\to A_\Gamma\to{\rm Isom}(C_\Gamma)$ satisfies these conditions. By Proposition \ref{globalfixedpoint} it follows that any subfamily of $\left\{{\rm Fix}(\psi\circ\Phi(l)) \mid l\in L\right\}$ with the finite intersection property has a non-empty intersection. Hence, the action $\psi\circ\Phi$ has a global fixed point. Since the action is strongly cellular there exists a vertex $gA_\Delta \in C_\Gamma$ that is fixed by this action and therefore $\psi(L)$ is contained in the stabilizer of this vertex that is equal to $gA_\Delta g^{-1}$.

Let $\varphi\colon L\to A_\Gamma$ be a group homomorphism from a locally compact Hausdorff group $L$ into an Artin group $A_\Gamma$ that lies in the class $\mathcal{B}$. Once again we give $A_\Gamma$ the discrete topology. By the above paragraph we know that the image of the connected component $L^\circ$ under $\varphi$ is contained in a parabolic complete subgroup $gA_\Delta g ^{-1}$ of $A_\Gamma$. Now follow the same argument as given in the proof of Theorem \ref{MainTheorem1} (2).
\end{proof}

\end{document}